\newtheorem{thm}{Theorem}[section]
\newtheorem{prop}[thm]{Proposition}
\newtheorem{lem}[thm]{Lemma}
\newtheorem{rmk}[thm]{Remark}
\def\D{\mathrm{D}}
\def\M{\mathscr{M}}
\def\O{\mathscr{O}}
\def\Z{\mathscr{Z}}
\def\d{\mathrm{d}}
\def\e{\mathrm{e}}
\def\h{\mathrm{h}}
\def\s{\mathrm{s}}
\def\Cset{\mathbb{C}}
\def\Nset{\mathbb{N}}
\def\Pset{\mathbb{P}}
\def\Rset{\mathbb{R}}
\def\Zset{\mathbb{Z}}
\def\Re{\mathrm{Re}}
\def\Span{\mathrm{span}}
\def\fix{\mathrm{Fix}}
\def\id{\mathrm{id}}
\def\loc{\mathrm{loc}}
\def\sech{\mathrm{sech}}
\def\half{{\textstyle\frac{1}{2}}}
\def\epsilon{\varepsilon}
\def\theequation{\arabic{section}.\arabic{equation}}
 \def\widebar{\accentset{{\cc@style\underline{\mskip10mu}}}}
\begin{document}

\title[Bifurcations of homoclinic orbits II: Reversible systems]{%
Analytic and algebraic conditions for bifurcations of homoclinic orbits II:
Reversible systems}

\author{Kazuyuki~Yagasaki}

\address{
Department of Applied Mathematics and Physics,
Graduate School of Informatics,
Kyoto University,
Yoshida-Honmachi, Sakyo-ku,
Kyoto 606-8501, Japan}
\email{yagasaki@amp.i.kyoto-u.ac.jp}

\date{\today}

\subjclass[2020]{34C23, 34C37, 37C29, 34A30}

\keywords{
Homoclinic orbit; bifurcation; reversible system;
differential Galois theory; Melnikov method}

\begin{abstract}
Following Part~I, we consider a class of reversible systems
 and study bifurcations of homoclinic orbits to hyperbolic saddle equilibria.
Here we concentrate on the case in which homoclinic orbits are symmetric,
 so that only one control parameter is enough to treat their bifurcations,
 as in Hamiltonian systems.
First, we modify and extend arguments of Part~I to show
 in a form applicable to general systems discussed there that
 if such bifurcations occur in four-dimensional systems,
 then variational equations around the homoclinic orbits are integrable
 in the meaning of differential Galois theory.
We next extend the Melnikov method of Part~I to reversible systems
 and obtain theorems on saddle-node, transcritical and pitchfork bifurcations
 of symmetric homoclinic orbits.
We illustrate our theory for a four-dimensional system,
 and demonstrate the theoretical results by numerical ones.
\end{abstract}

\maketitle

\section{Introduction}

In the companion paper \cite{BY12a}, which we refer to as Part I here, 
 we studied bifurcations of homoclinic orbits to hyperbolic saddle equilibria
 in a class of systems including Hamiltonian systems.
They also arise as bifurcations of solitons or pulses in partial differential equations (PDEs),
 and have attracted much attention even in the fields of PDEs and nonlinear waves
 (see, e.g., Section~2 of \cite{S02}).
Only one control parameter is enough to treat these bifurcations in Hamiltonian systems
 but two control parameters are needed in general.
We applied a version of Melnikov's method due to Gruendler \cite{G92}
 to obtain some theorems on saddle-node and pitchfork types of bifurcations
 for homoclinic orbits in systems of dimension four or more.
Furthermore, we proved that if these bifurcations occur in four-dimensional systems,
 then variational equations (VEs) around the homoclinic orbits are integrable
 in the meaning of differential Galois theory \cite{CH11,PS03}
 when there exist analytic invariant manifolds on which the homoclinic orbits lie.
In \cite{YY20}, spectral stability of solitary waves,
 which correspond to such homocinic orbits in a two-degree-of-freedom Hamiltonian system,
 in coupled nonlinear Schr\"odinger equations were also studied.
 
In this part, we consider reversible systems of the form
\begin{equation}
\dot{x}=f(x;\mu),\quad
(x,\mu)\in\Rset^{2n}\times\Rset,
\label{eqn:sys}
\end{equation}
where $f:\Rset^{2n}\times\Rset\rightarrow\Rset^{2n}$ is analytic,
 $\mu$ is a parameter and $n\ge 2$ is an integer,
 and continue to discuss bifurcations of homoclinic orbits to hyperbolic saddles.
A rough sketch of the results were briefly stated in \cite{Y15}.
Our precise assumptions are as follows:
\begin{enumerate}
\setlength{\leftskip}{-0.5em}
\item[\bf(R1)]
The system \eqref{eqn:sys} is \emph{reversible},
 i.e., there exists a linear involution $R$
 such that $f(Rx;\mu)+Rf(x;\mu)=0$ for any $(x,\mu)\in\Rset^{2n}\times\Rset$.
Moreover, $\dim\fix(R)$ $=n$, where $\fix(R)=\{x\in\Rset^{2n}\,|\,Rx=x\}$.
\item[\bf(R2)]
The origin $x=0$, denoted by $O$,
 is an equilibrium in \eqref{eqn:sys} for all $\mu\in\Rset$, i.e., $f(0;\mu)=0$.
\end{enumerate}
Note that $O\in\fix(R)$ since $RO=O$.
A fundamental characteristic of reversible systems is that
 if $x(t)$ is a solution, then so is $Rx(-t)$.
We call a solution (and the corresponding orbit) \emph{symmetric}
 if $x(t)=Rx(-t)$.
It is a well-known fact that an orbit is symmetric
 if and only if it intersects the space $\fix(R)$ \cite{VF92}. 
Moreover, if $\lambda\in\Cset$ is an eigenvalue of $\D_x f(0;\mu)$,
 then so are $-\lambda$ and $\overline{\lambda}$,
 where the overline 
 represents the complex conjugate. 
See also \cite{LR98} for general properties of reversible systems.
We also assume the following.
\begin{enumerate}
\setlength{\leftskip}{-0.5em}
\item[\bf(R3)]
The Jacobian matrix $\D_x f(0;0)$ has $2n$ eigenvalues
 $\pm\lambda_1,\ldots,\pm\lambda_n$ such that $0<\Re\lambda_1\le\cdots\le\Re\lambda_n$
 (i.e., the origin is a hyperbolic saddle).
\item[\bf(R4)]
The equilibrium $x=0$ has a symmetric homoclinic orbit $x^\h(t)$ with $x^\h(0)\in\fix(R)$ at $\mu=0$.
{\color{black}Let $\Gamma_0=\{x^\h(t)|t\in\Rset\}\cup\{0\}$.}
\end{enumerate}

Assumptions similar to (R3) and (R4) were made
 in (M1) and (M2) for general multi-dimensional systems 
 and in (A1) and (A2) for four-dimensional systems in Part~I{\color{black}:
\begin{enumerate}
\setlength{\leftskip}{-0.5em}
\item[\bf(A1)]
The origin $x=0$ is a hyperbolic saddle equilibrium (in \eqref{eqn:sys} with $n=2$) at $\mu=0$,
 such that $\D_x f(0; 0)$ has four real eigenvalues,
 $\tilde{\lambda}_1\le\tilde{\lambda}_2< 0<\tilde{\lambda}_3\le\tilde{\lambda}_4$.
\item[\bf(A2)]
At $\mu=0$ the hyperbolic saddle $x=0$ has a homoclinic orbit $x^\h(t)$.
Moreover, there exists a two-dimensional analytic invariant manifold $\M$
 containing $x=0$ and $x^\h(t)$.
\end{enumerate}
In particular, in (R1)-(R4), we do not assume that
 there exists such an invariant manifold as $\M$ in (A2).
It follows from (R3) that the origin $x=0$ is still a hyperbolic saddle near $\mu=0$
 under some change of coordinates if necessary, as in (R2).}

Reversible systems are frequently encountered in applications
 such as mechanics, fluids and optics,
 and have attracted much attention in the literature \cite{LR98}.
One of {\color{black}the} characteristic properties of reversible systems
 is that homoclinic orbits to hyperbolic saddles  are typically symmetric
 and continue to exist when their parameters are varied if so,
 in contrast to the fact that such orbits do not persist in general systems.
In \cite{K97} saddle-node bifurcations of homoclinic orbits to hyperbolic saddles
 in reversible systems 
 were previously discussed and shown to be codimension-one or -two
 depending on whether the homoclinic orbits are symmetric or not.
Here we concentrate on the case in which homoclinic orbits are symmetric,
 so that only one control parameter is enough to treat their bifurcations,
 as in Hamiltonian systems {\color{black}(see Part~I)}.
For asymmetric homoclinic orbits,
 the arguments of Part~I for non-Hamiltonian systems can apply.

The object of this paper is twofold.
First, we consider the case of $n=2$,
 and modify and extend arguments given in Part~I to show that
 if a bifurcation of the homoclinic orbit $x^\h(t)$ occurs at $\mu=0$,
 then the VE of \eqref{eqn:sys} around $x^\h(t)$ at $\mu=0$,
\begin{equation}
\dot{\xi}=\D_xf(x^\h(t);0)\xi,\quad
{\color{black}\xi\in\Cset^4,}
\label{eqn:ve}
\end{equation}
is integrable in the meaning of differential Galois theory under some conditions
 {\color{black}even if there does not exist such an invariant manifold as $\M$ in (A2).
Here the domain on which
 Eq.~\eqref{eqn:ve} is defined has been extended to a neighborhood of $\Rset$ in $\Cset$.
Such an extension is possible since $f(x;\mu)$ and $\D_xf(x;\mu)$ are analytic.}
We assume the following {\color{black}three} conditions:
\begin{enumerate}
\setlength{\leftskip}{-0.5em}
{\color{black}
\item[\bf(B1)]
The origin $x=0$ is a hyperbolic saddle equilibrium and has a homoclinic orbit $x^\h(t)$
 in \eqref{eqn:sys} with $n=2$ at $\mu=0$, such that $\D_x f(0; 0)$ has four real eigenvalues,
 $\tilde{\lambda}_1\le\tilde{\lambda}_2< 0<\tilde{\lambda}_3\le\tilde{\lambda}_4$.}
\item[\bf(B2)]
The homoclinic orbit $x^\h(t)$ is expressed as
\begin{equation}
x^\h(t)=
\begin{cases}
h_+(e^{\lambda_- t}) & \mbox{for $\Re\,t>0$};\\
h_-(e^{\lambda_+ t}) & \mbox{for $\Re\,t<0$},
\end{cases}
\label{eqn:b1}
\end{equation}
in a neighborhood $U$ of {\color{black}$t=0$ in $\Cset$,
 where $h_\pm:U\to\Cset^4$ are certain analytic functions
 with their derivatives satisfying} $h_\pm'(0)\neq 0$,
{\color{black}$\lambda_+=\tilde{\lambda}_3$ or $\tilde{\lambda}_4$
 and $\lambda_-=\tilde{\lambda}_1$ or $\tilde{\lambda}_2$.
When the system~\eqref{eqn:sys} is reversible and $x^\h(t)$ is symmetric
 as in (R1)-(R4),
 we have $\lambda_\pm=\mp\tilde{\lambda}_j$ for $j=1$ or $2$.}
\item[\bf(B3)]
The VE \eqref{eqn:ve} has a solution $\xi=\varphi(t)$ such that
\begin{equation}
\begin{split}
&\qquad
\varphi(\lambda_-^{-1}\log z)
=a_+(z)z^{\lambda_+'/\lambda_-}+b_{1+}(z)z^{\tilde{\lambda}_1/\lambda_-}
 +b_{2+}(z)z^{\tilde{\lambda}_2/\lambda_-},\\
&\qquad\mbox{or}\quad
\varphi(\lambda_-^{-1}\log z)
=a_+(z)z^{\lambda_+'/\lambda_-}+b_{1+}(z)z+b_{2+}(z)z\log z
\end{split}
\label{eqn:b2a}
\end{equation}
and
\begin{equation}
\begin{split}
&
\varphi(\lambda_+^{-1}\log z)
=a_-(z)z^{\lambda_-'/\lambda_+}+b_{1-}(z)z^{\tilde{\lambda}_3/\lambda_+}
 +b_{2-}(z)z^{\tilde{\lambda}_4/\lambda_+},\\
&\mbox{or}\quad
\varphi(\lambda_+^{-1}\log z)
=a_-(z)z^{\lambda_-'/\lambda_+}+b_{1-}(z)z+b_{2-}(z)z\log z
\end{split}
\label{eqn:b2b}
\end{equation}
{\color{black}in $|z|\ll 1$}, where $a_{\pm}(z)$ and $b_{j\pm}(z)$, $j=1,2$,
 are certain analytic functions in $U$ with $a_\pm(0)\neq 0$
 and $\lambda_+'=\tilde{\lambda}_3$ or $\tilde{\lambda}_4$
 and $\lambda_-'=\tilde{\lambda}_1$ or $\tilde{\lambda}_2$.
\end{enumerate}
{\color{black}Note that if assumptions~(A1) and (A2) hold, then so does (B1).}
In (B3), we have $\varphi(t)e^{-\lambda_+' t}=\O(1)$ as $t\to+\infty$
 and $\varphi(t)e^{\lambda_-' t}=\O(1)$ as $t\to-\infty$.
Moreover, {\color{black}the second equations in \eqref{eqn:b2a} and \eqref{eqn:b2b} hold
 only if $\tilde{\lambda}_1=\tilde{\lambda}_2$ and $\tilde{\lambda}_3=\tilde{\lambda}_4$,
 respectively.
Note that the existence of such an invariant manifold as $\M$ in (A2) is not assumed.}
We prove the following theorem.

\begin{thm}
\label{thm:main}
Let $n=2$ and suppose that the following condition holds
 along with {\color{black}\rm(B1)-(B3):}
\begin{enumerate}
\setlength{\leftskip}{-1em}
\item[\bf(C)]
The VE \eqref{eqn:ve} has another linearly independent bounded solution.
\end{enumerate}
Then the VE \eqref{eqn:ve} has a triangularizable differential Galois group,
 when regarded as a complex differential equation with meromorphic coefficients
 in a desingularized neighborhood $\Gamma_\loc$ of the homoclinic orbit $x^\h(t)$
 in $\Cset^4$.
\end{thm}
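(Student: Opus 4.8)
The plan is to produce, on the four‑dimensional solution space $V$ of the variational equation \eqref{eqn:ve} over the field $K$ of meromorphic functions on $\Gamma_\loc$, a complete flag $0\subset V_1\subset V_2\subset V_3\subset V_4=V$ of subspaces invariant under the differential Galois group $G$; since a linear group is triangularizable exactly when it stabilizes a complete flag, this is equivalent to the conclusion. We would first fix the geometry of $\Gamma_\loc$, taking $t$ as coordinate on the bulk and $z=e^{\lambda_- t}$, $w=e^{\lambda_+ t}$ near $t=+\infty$ and $t=-\infty$, respectively. By (B1)--(B2) the coefficient matrix $\D_xf(x^\h(t);0)$ extends to a meromorphic matrix on $\Gamma_\loc$, and at $z=0$ (resp.\ $w=0$) Eq.~\eqref{eqn:ve} has a regular singular point with exponents $\tilde\lambda_j/\lambda_-$ (resp.\ $\tilde\lambda_j/\lambda_+$), $j=1,\dots,4$; when $\tilde\lambda_1=\tilde\lambda_2$ or $\tilde\lambda_3=\tilde\lambda_4$ these exponents resonate and logarithmic terms appear, which is precisely what the second alternatives in \eqref{eqn:b2a}--\eqref{eqn:b2b} record.

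For $V_1$ we use that the orbit tangent $\dot x^\h(t)=f(x^\h(t);0)$ is a nonzero solution of \eqref{eqn:ve} which near $z=0$ equals $f(h_+(z);0)$, holomorphic in $z$ and vanishing to order exactly one (its leading term is $\lambda_- h_+'(0)z$, with $h_+'(0)$ a $\lambda_-$‑eigenvector of $\D_xf(0;0)$), and likewise near $w=0$; hence $\dot x^\h\in K^4$, so $V_1:=\Cset\dot x^\h$ is a one‑dimensional $G$‑invariant subspace on which $G$ acts trivially. For $V_2$ we invoke condition (C): every solution bounded on $\Rset$ decays exponentially as $t\to\pm\infty$ by exponential dichotomy, so by (C) the bounded solutions form a two‑dimensional space $V^\s\supset V_1$. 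Decay as $t\to+\infty$ forces the local exponent at $z=0$ to have positive real part, i.e.\ to lie in $\{\tilde\lambda_1/\lambda_-,\tilde\lambda_2/\lambda_-\}$; thus $V^\s$ is contained in, and by a dimension count equal to, the span of the two local solutions with those exponents, which is invariant under the monodromy at $z=0$, and symmetrically $V^\s$ equals the analogous span at $w=0$. So $V^\s$ is invariant under the local monodromies, hence the local Galois groups, at both singular ends.

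To pass from this local invariance to genuine $G$‑invariance we would exploit, in the reversible case, the $\Cset$‑linear involution $\sigma\colon\xi(t)\mapsto R\xi(-t)$ of $V$, which carries $V^\s$ to itself and normalizes $G$, together with the explicit expansions of $x^\h$ in (B2) and of the distinguished solution $\varphi$ in (B3) — the general‑systems counterparts of the constructions of Part~I — which pin down the analytic continuation of the relevant solution germs along the whole of $\Gamma_\loc$; this yields $V_2:=V^\s$ as a $G$‑invariant plane with $V_1\subset V_2$. Finally, $\varphi$ is unbounded (it grows like $e^{\lambda_+'t}$ as $t\to+\infty$), so $\varphi\notin V^\s$ and $V_3:=V^\s+\Cset\varphi$ is three‑dimensional; by \eqref{eqn:b2a} the expansion of $\varphi$ at $z=0$ is, modulo $V^\s$, a single power $z^{\lambda_+'/\lambda_-}$ (or $z,\,z\log z$ in the resonant case), so $V_3$ is again a sum of generalized monodromy eigenspaces at $z=0$ and, by \eqref{eqn:b2b}, at $w=0$. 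Running the same continuation‑and‑symmetry argument makes $V_3$ a $G$‑invariant hyperplane, and $0\subset V_1\subset V_2\subset V_3\subset V_4=V$ is the required flag, so $G$ is triangularizable.

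We expect the crux to be exactly the passage just described from the \emph{analytic} input — boundedness of a second solution in (C), the prescribed growth orders, and the explicit local forms in (B2)--(B3) — to the \emph{algebraic} statement that $V^\s$ and $V_3$ are $G$‑stable: invariance under the two monodromies at $z=0$ and $w=0$ is not by itself enough, since $\Gamma_\loc$ need not be a twice‑punctured sphere and $G$ can properly contain the group generated by those two monodromies. Controlling the solution germs globally over $\Gamma_\loc$, which the detailed hypotheses (B2), (B3) are built to allow and which the reversibility symmetry $\sigma$ further constrains, is where the real effort lies; handling the resonant cases $\tilde\lambda_1=\tilde\lambda_2$ and $\tilde\lambda_3=\tilde\lambda_4$ then reduces to routine bookkeeping of the logarithmic terms.
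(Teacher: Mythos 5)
Your construction of the flag is essentially the paper's: the paper also takes $\dot{x}^\h$ (Lemma~\ref{lem:2b}, giving $M_\pm e_1=e_1$), the second bounded solution furnished by (C) together with Lemma~\ref{lem:2a} (giving $M_\pm e_2\in\Span\{e_1,e_2\}$ via the local expansions \eqref{eqn:2b}), and the solution $\varphi$ of (B3) (giving a $v$ with $M_\pm v\in\Span\{v,e_1,e_2\}$), with the logarithmic alternatives of \eqref{eqn:b2a}--\eqref{eqn:b2b} covering the resonant cases $\tilde{\lambda}_1=\tilde{\lambda}_2$, $\tilde{\lambda}_3=\tilde{\lambda}_4$. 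The genuine gap is at the step you yourself single out as ``where the real effort lies'': you never actually pass from invariance under the two local monodromies $M_\pm$ to invariance under the full differential Galois group, and the bridge you propose is the wrong one. Invoking the reversibility involution $\xi(t)\mapsto R\xi(-t)$ cannot be the mechanism, because the theorem is asserted (and proved in Section~2 of the paper) for general four-dimensional systems without (R1)--(R4) -- the homoclinic orbit may be asymmetric -- so a proof resting on $R$ would not establish the stated result; and ``controlling the solution germs globally over $\Gamma_\loc$'' is left as a wish rather than an argument.

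The correct bridge, which your sketch brushes past, is the Fuchsian property: by Lemma~\ref{lem:2c} the transformed VE has only regular singular points, at $z=0$ in $\Sigma_\pm$, on the narrow surface $\Gamma_\loc$, whose punctured version has fundamental group generated by the two loops around $0_\pm$. For a Fuchsian system over the field of meromorphic functions on $\Gamma_\loc$, any element of the Picard--Vessiot field fixed by the monodromy group is single-valued and of moderate growth at the regular singularities, hence meromorphic on $\Gamma_\loc$, so the Galois group is the Zariski closure of the group generated by $M_\pm$; the paper packages this as Corollary~3.5 of Part~I and simply cites it, so that simultaneous triangularizability of $M_+$ and $M_-$ immediately yields triangularizability of the Galois group. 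Your stated worry that ``$G$ can properly contain the group generated by those two monodromies'' is thus resolved not by symmetry or by extra continuation arguments but by regularity of the singularities; without this ingredient your proof does not close, and with it the reversibility apparatus you lean on is superfluous.
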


\begin{figure}
\begin{center}
\includegraphics[scale=0.8]{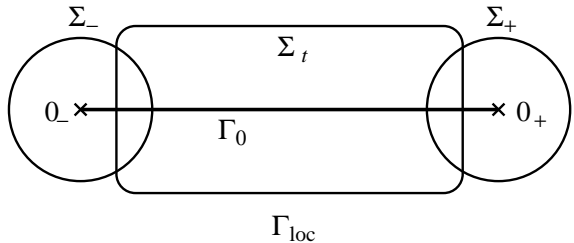}
\caption{Local Riemann surface $\Gamma_\loc$
 and its covering $\{\Sigma_\pm,\Sigma_t\}$.}
\label{fig:1a}
\end{center}
\end{figure}

Note that $\xi=\dot{x}^\h(t)$ is a bounded solution to \eqref{eqn:ve}. 
A proof of Theorem~\ref{thm:main} is given in Section~2.
Here $\Gamma_\loc$ is a local Riemann surface
 that is given by $\Sigma_-\cup\Sigma_t\cup\Sigma_+$
 and taken sufficiently narrowly,
 where $\Sigma_+$ and $\Sigma_-$ are, respectively, neighborhoods of $0_+$ and $0_-$,
  which are represented in the temporal parameterization of $x^\h(t)$
  by $t=+\infty$ and $-\infty$,
  and $\Sigma_t$ be a neighborhood of $\Gamma_0\setminus(\Sigma_+\cup\Sigma-)$
  (see Fig.~\ref{fig:1a}).
See Section~2 for more details.
This theorem means that under conditions (B1)-(B3)
 the VE \eqref{eqn:ve} is integrable in the meaning of differential Galois theory
 if condition~(C) holds.
The statement of Theorem~\ref{thm:main} holds for general systems,
 especially even if the homoclinic orbit is asymmetric in \eqref{eqn:sys}.
It was proved in Part~I, where instead of such conditions as (B2) and (B3)
 the existence of a two-dimensional invariant manifold containing $x^\h(t)$ was assumed.
Conditions~(B2) and (B3) with appropriate modifications also hold
 under the restricted assumption.
We give simple conditions guaranteeing (B2) and (B3) {\color{black}under condition~(B1)}
 in the general setting.
See Proposition~\ref{prop:2a}.
As stated below (see also Part~I),
 condition~(C) is a necessary and sufficient condition
 for the occurrence of some bifurcations of the homoclinic orbit
 under certain {\color{black}nondegenerate} conditions.

Secondly, {\color{black}we extend} the Melnikov method of Part~I to reversible systems
 and obtain some theorems on saddle-node, transcritical and pitchfork bifurcations
 of symmetric homoclinic orbits.
In particular, it is shown without the restriction of $n=2$
 that if and only if condition~(C) holds
 and no further linearly independent {\color{black}bounded} solution to the VE \eqref{eqn:ve} exists,
 then saddle-node, transcritical or pitchfork bifurcations of symmetric homoclinic orbits occur
 under some {\color{black}nondegenerate} conditions.
We emphasize that our result is not an immediate extension of Part~I:
The reversibility of the systems as well as their symmetry
 is well used to detect codimension-one bifurcations of symmetric homoclinic orbits.
So bifurcations of symmetric homoclinic orbits in reversible systems
 are proven to be of codimension-one, again. 
We also illustrate our theory for a four-dimensional system.
We perform numerical computations using the computer tool \texttt{AUTO} \cite{DO07},
  and {\color{black}demonstrate the usefulness and validity of the theoretical results
  comparing them with the numerical ones.}


The outline of the paper is as follows:
In Section~2 we give a proof of Theorem~\ref{thm:main}
 in a form applicable to general {\color{black}four-dimensional} systems discussed in Part~I.
In Section~3 we extend the Melnikov method
 and obtain analytic conditions for bifurcations of symmetric homoclinic orbits.
Finally, our theory is illustrated for the example
 along with numerical results in Section~4.

\section{Algebraic Condition}

In this section we restrict ourselves to the case of $n=2$
 and give a proof of Theorem~\ref{thm:main}
 in a form applicable to general systems stated above,
 {\color{black}i.e., without assumptions (R1)-(R4)}.
We first recall Lemma~2.1 of Part~I in our setting.

\begin{lem}
\label{lem:2a}
{\color{black}Under condition~{\rm(B1)}, there exist
 a fundamental matrix $\Phi(t)=(\varphi_1(t),\ldots,\varphi_4(t))$ of \eqref{eqn:ve}
 and a permutation $\sigma$ on four symbols $\{1,2,3,4\}$} such that
\begin{align*}
&
\varphi_j(t)t^{-k_j}e^{-\tilde{\lambda}_jt}=\O(1)
\quad\mbox{as $t\to+\infty$},\\
&
\varphi_j(t)t^{-k_\sigma(j)}e^{-\tilde{\lambda}_{\sigma(j)}t}=\O(1)
\quad\mbox{as $t\to-\infty$},
\end{align*}
where {\color{black}$k_j=0$ or $1$, $j=1$-$4$}.
\end{lem}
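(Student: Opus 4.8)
The plan is to exploit that \eqref{eqn:ve} is, near each of the two ends $t=\pm\infty$, an exponentially small perturbation of the autonomous system $\dot\xi=A\xi$ with $A=\D_xf(0;0)$, to read off from this two spectral filtrations of the solution space, and then to interlace them by a Bruhat-type argument.

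First I would use condition~(B1): since $O$ is a hyperbolic saddle carrying the homoclinic orbit $\Gamma_0$, the stable/unstable manifold theorem gives $|x^\h(t)|\le C e^{-\delta|t|}$ for some $C,\delta>0$ as $t\to\pm\infty$, so that, writing $B(t)=\D_xf(x^\h(t);0)$, analyticity of $f$ yields $B(t)=A+R(t)$ with $\|R(t)\|\le C'e^{-\delta|t|}$; in particular $R$ is integrable on $[T,\infty)$ and on $(-\infty,-T]$. The matrix $A$ has the four real eigenvalues $\tilde\lambda_1\le\tilde\lambda_2<0<\tilde\lambda_3\le\tilde\lambda_4$ and all its Jordan blocks have size at most $2$. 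By the classical theory of asymptotic integration of linear systems that are integrable perturbations of a constant matrix (Levinson's theorem together with its refinement to the case of coincident eigenvalues / Jordan blocks), there is a fundamental matrix $\Phi_+(t)=(\varphi_1^+(t),\dots,\varphi_4^+(t))$ of \eqref{eqn:ve} near $t=+\infty$ with $\varphi_j^+(t)\,t^{-k_j}e^{-\tilde\lambda_jt}\to v_j\neq 0$ as $t\to+\infty$, where $v_j$ is a (generalized) eigenvector of $A$ for $\tilde\lambda_j$ and $k_j\in\{0,1\}$ records the Jordan structure; likewise a fundamental matrix $\Phi_-(t)=(\varphi_1^-(t),\dots,\varphi_4^-(t))$ near $t=-\infty$ with $\varphi_j^-(t)\,t^{-k_j}e^{-\tilde\lambda_jt}\to w_j\neq 0$ as $t\to-\infty$.

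These asymptotics endow the four-dimensional solution space $V$ of \eqref{eqn:ve} with two filtrations by exponential growth rate: an ascending one $0=\mathcal F^+_0\subset\mathcal F^+_1\subset\cdots\subset\mathcal F^+_4=V$ with $\mathcal F^+_j=\Span\{\varphi_1^+,\dots,\varphi_j^+\}=\{\xi\in V\mid \xi(t)\,t^{-k_j}e^{-\tilde\lambda_jt}=\O(1)\ \text{as}\ t\to+\infty\}$, and a descending one $V=\mathcal F^-_1\supset\cdots\supset\mathcal F^-_4\supset\mathcal F^-_5=0$ with $\mathcal F^-_j=\Span\{\varphi_j^-,\dots,\varphi_4^-\}=\{\xi\in V\mid \xi(t)\,t^{-k_j}e^{-\tilde\lambda_jt}=\O(1)\ \text{as}\ t\to-\infty\}$ (here the index within a repeated eigenvalue is chosen so that the solution without a $t$-factor comes first; if several $\tilde\lambda_j$ coincide these are a priori only partial flags, which we refine arbitrarily to complete ones). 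By the Bruhat decomposition, any two complete flags in $\Cset^4$ are in a relative position described by some permutation $\sigma$ on $\{1,2,3,4\}$; concretely, there exist a basis $\varphi_1,\dots,\varphi_4$ of $V$ and such a $\sigma$ with $\varphi_j\in\mathcal F^+_j\setminus\mathcal F^+_{j-1}$ and $\varphi_j\in\mathcal F^-_{\sigma(j)}\setminus\mathcal F^-_{\sigma(j)+1}$ for $j=1,\dots,4$. Membership of $\varphi_j$ in $\mathcal F^+_j$ gives $\varphi_j(t)\,t^{-k_j}e^{-\tilde\lambda_jt}=\O(1)$ as $t\to+\infty$, and membership in $\mathcal F^-_{\sigma(j)}$ gives $\varphi_j(t)\,t^{-k_{\sigma(j)}}e^{-\tilde\lambda_{\sigma(j)}t}=\O(1)$ as $t\to-\infty$. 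Setting $\Phi=(\varphi_1,\dots,\varphi_4)$ with this $\sigma$ then proves the lemma.

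The step I expect to be the main obstacle is the asymptotic-integration input when $\tilde\lambda_1=\tilde\lambda_2$ or $\tilde\lambda_3=\tilde\lambda_4$: for non-diagonalizable $A$ one must invoke the Jordan-block version of Levinson's theorem to be sure that the $t^{k_j}$ factors describe the growth exactly (with no stray logarithmic factors) and that a general element of $\mathcal F^+_j$ is genuinely $\O(t^{k_j}e^{\tilde\lambda_jt})$; when the four eigenvalues are distinct Levinson's dichotomy condition holds automatically and all $k_j=0$, so this step is immediate. The linear-algebra (Bruhat) step is elementary, and as a consistency check note that $\xi=\dot x^\h(t)$, which decays at both ends, must occur as one of the $\varphi_j$ with $\tilde\lambda_j<0$ and $\tilde\lambda_{\sigma(j)}>0$, in accordance with $\sigma$ being a permutation.
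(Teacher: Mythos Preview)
The paper does not prove this lemma at all: it is simply quoted from Part~I (\cite{BY12a}, Lemma~2.1), with the sentence ``We first recall Lemma~2.1 of Part~I in our setting.'' So there is no argument here to compare against; your outline is in fact supplying a proof where the present paper offers none. The strategy you sketch---exponential decay of $x^\h$ gives an $L^1$ perturbation of $\dot\xi=A\xi$, Levinson-type asymptotic integration produces spectral bases at $t=\pm\infty$, and a Bruhat/relative-position argument interlaces the two resulting flags---is the natural one and is essentially how such results are proved (and is consistent with the framework of Part~I).

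There is, however, a genuine bookkeeping slip in your treatment of the descending flag at $t=-\infty$ in the Jordan case. With your convention ``the solution without a $t$-factor comes first,'' say $\tilde\lambda_1=\tilde\lambda_2$ with $k_1=0$, $k_2=1$, your set-theoretic and span descriptions of $\mathcal F^-_j$ disagree: $\varphi_2^-(t)\sim t\,e^{\tilde\lambda_1 t}$ lies in $\Span\{\varphi_1^-,\dots,\varphi_4^-\}=V$ but fails $\xi(t)\,t^{-k_1}e^{-\tilde\lambda_1 t}=\O(1)$ as $t\to-\infty$, while conversely $\varphi_1^-$ satisfies the $\O$-condition defining $\mathcal F^-_2$ yet is excluded from the span. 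The upshot is that membership $\varphi_j\in\mathcal F^-_{\sigma(j)}$ does \emph{not} automatically yield the bound $\varphi_j(t)\,t^{-k_{\sigma(j)}}e^{-\tilde\lambda_{\sigma(j)}t}=\O(1)$ at $-\infty$ with your fixed $k$'s. The fix is easy: within a repeated eigenvalue, reverse the ordering of the $t$-factor at the $-\infty$ end (equivalently, build the two growth flags from the genuine Levinson bases and only afterwards read off the permutation and the $k_j$'s, allowing $k_j=1$ whenever needed to absorb a possible logarithmic factor). Once the two flags are consistently defined by growth rate, the Bruhat step goes through verbatim and the stated $\O$-bounds follow. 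Your identification of the Jordan-block case as the only delicate point is correct; you just need to carry the convention through that case consistently.
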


We now assume that the hypotheses of Theorem~\ref{thm:main} hold along with condition~(C)
 and that {\color{black}$\lambda_-=\tilde{\lambda}_1(<0)$
 and $\lambda_+=\tilde{\lambda}_4(>0)$ for simplicity
 since the other cases can be treated similarly.
We regard the VE \eqref{eqn:ve} as a differential equation
 defined on a neighborhood of $\Rset$ in $\Cset$, as stated in Section~1.}
From \eqref{eqn:b1} {\color{black}in (B3)} we easily obtain the following.

\begin{lem}
\label{lem:2b}
We have
\[
\varphi_1(t)=
\begin{cases}
\lambda_-e^{\lambda_-t}\,h_+'(e^{\lambda_- t}) & \mbox{for $\Re\,t>0$};\\
\lambda_+e^{\lambda_+t}\,h_-'(e^{\lambda_+ t}) & \mbox{for $\Re\,t<0$},
\end{cases}
\]
{\color{black} in $U$, where 
 $h_\pm'(z)$ represent the derivatives of $h_\pm(z)$}.
\end{lem}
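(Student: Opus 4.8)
The plan is to recognize $\varphi_1(t)$ as the tangent solution $\dot{x}^\h(t)$ and then differentiate the local representation \eqref{eqn:b1} of the homoclinic orbit by the chain rule.

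First I would recall (as already noted just after Theorem~\ref{thm:main}) that differentiating $\dot{x}^\h(t)=f(x^\h(t);0)$ in $t$ yields $\ddot{x}^\h(t)=\D_xf(x^\h(t);0)\dot{x}^\h(t)$, so $\xi=\dot{x}^\h(t)$ solves the VE \eqref{eqn:ve}; it is bounded on $\Rset$ and, since $x^\h(t)\to 0$ exponentially as $t\to\pm\infty$, so does $\dot{x}^\h(t)$. Next I would pin down which column of the fundamental matrix $\Phi(t)$ of Lemma~\ref{lem:2a} this solution occupies. Using \eqref{eqn:b1} together with $h_\pm'(0)\neq 0$: as $\Re\,t\to+\infty$ we have $e^{\lambda_-t}\to 0$ (recall the standing choice $\lambda_-=\tilde{\lambda}_1<0$), whence $\dot{x}^\h(t)\sim\lambda_-h_+'(0)\,e^{\tilde{\lambda}_1 t}$; similarly as $\Re\,t\to-\infty$ one gets $\dot{x}^\h(t)\sim\lambda_+h_-'(0)\,e^{\tilde{\lambda}_4 t}$ using $\lambda_+=\tilde{\lambda}_4>0$. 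Hence $\dot{x}^\h$ satisfies the estimates of Lemma~\ref{lem:2a} with $k_1=0$ and $\sigma(1)=4$, so we may (and do) take $\varphi_1(t)=\dot{x}^\h(t)$.

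Finally, I would apply the chain rule to \eqref{eqn:b1}: for $\Re\,t>0$, $\frac{\d}{\d t}h_+(e^{\lambda_-t})=\lambda_-e^{\lambda_-t}\,h_+'(e^{\lambda_-t})$, and for $\Re\,t<0$, $\frac{\d}{\d t}h_-(e^{\lambda_+t})=\lambda_+e^{\lambda_+t}\,h_-'(e^{\lambda_+t})$, which is precisely the asserted formula. That these identities hold on all of $U$, not merely on $\Rset\cap U$, follows from analyticity of $x^\h$, $h_\pm$, and the coefficients of \eqref{eqn:ve} together with the extension of the VE to a complex neighborhood of $\Rset$ described in Section~1. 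There is essentially no obstacle here; the only point needing a moment's care is the bookkeeping that legitimizes labelling this solution $\varphi_1$, i.e. checking that its exponential rates $\tilde{\lambda}_1$ at $+\infty$ and $\tilde{\lambda}_4$ at $-\infty$ are consistent with the choice $\lambda_-=\tilde{\lambda}_1$, $\lambda_+=\tilde{\lambda}_4$ fixed before the lemma, and that $h_\pm'(0)\neq 0$ rules out a faster decay that would move this column to a different slot of the permutation $\sigma$.
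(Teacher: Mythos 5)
Your proposal is correct and follows exactly the paper's (very brief) argument: identify $\varphi_1(t)$ with the VE solution $\dot{x}^\h(t)$, consistent with the growth rates of Lemma~\ref{lem:2a} under the standing choice $\lambda_-=\tilde{\lambda}_1$, $\lambda_+=\tilde{\lambda}_4$, and differentiate the local representation \eqref{eqn:b1} by the chain rule. The extra bookkeeping you supply (using $h_\pm'(0)\neq 0$ to fix $\sigma(1)=4$, $k_1=0$) is exactly what the paper leaves implicit, so no discrepancy.
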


Let $U_\Rset$ denote a neighborhood of $\Rset\cup\{\infty\}$
 in the Riemann sphere $\Pset^1=\Cset\cup\{\infty\}$ 
 and let $\Gamma=\{x^\h(t)\mid t\in U_\Rset\}$.
Introducing two points $0_+$ and $0_-$ for the double point $x=0$,
 we desingularize the curve $\Gamma_0=\{x=x^\h(t)\mid t\in\Rset\}\cup\{0\}$
 on $\Gamma$.
Here the points $0_+$ and $0_-$ are represented in the temporal parameterization
 by $t=+\infty$ and $-\infty$, respectively.
Let $\Sigma_\pm$ be neighborhoods of $0_\pm$ on $\Gamma$,
 and take a sufficiently narrow, simply connected neighborhood $\Sigma_t$
 of $\Gamma_0\setminus(\Sigma_+\cup\Sigma_-)$.
We set $\Gamma_\loc=\Sigma_-\cup\Sigma_t\cup\Sigma_+$.
See Fig.~\ref{fig:1a}.
Using the expression \eqref{eqn:b1}, 
 we introduce three charts on the Riemann surface $\Gamma_\loc$:
 a chart $\Sigma_+$ (resp. $\Sigma_-$) near $0_+$ (resp. near $0_-$)
 with coordinates $z=\e^{\lambda_- t}$ (resp. $z=\e^{\lambda_+ t}$),
 and a chart $\Sigma_t$ bridging them with a coordinate $t$.
The transformed VE becomes
\[
\frac{\d\xi}{\d z}
 =\frac{1}{\lambda_\mp z}\D_x f(h_\pm(z);0)\,\xi
\]
in the charts $\Sigma_{\pm}$
 while it has the same form in the chart $\Sigma_t$ as the original one.
We take a sufficiently small surface as $\Gamma_{\loc}$
 such that it contains no other singularity except $z=0$ in $\Sigma_{\pm}$.
We easily see that $\D_x f(h_\pm(z);0)$ are analytic in $\Sigma_\pm$
 to obtain the following lemma.

\begin{lem}
\label{lem:2c}
The singularities of the transformed VE for \eqref{eqn:ve} at $z=0$ in $\Sigma_{\pm}$
 are regular.
Thus, the transformed VE is Fuchsian on $\Gamma_{\loc}$.
\end{lem}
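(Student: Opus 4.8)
The plan is to read off the pole order of the transformed coefficient matrix in each chart and then invoke the standard sufficient criterion for a regular singular point of a linear system. In the chart $\Sigma_\pm$ the transformed VE is
\[
\frac{\d\xi}{\d z}=\frac{1}{\lambda_\mp z}\,\D_x f(h_\pm(z);0)\,\xi .
\]
By condition~(B2) the functions $h_\pm$ are analytic on a neighborhood of $z=0$ with $h_\pm(0)=0$, and since $f$ is analytic, the composition $z\mapsto\D_x f(h_\pm(z);0)$ is analytic on $\Sigma_\pm$ once $\Gamma_\loc$ is taken small enough. Hence the coefficient matrix $A_\pm(z):=(\lambda_\mp z)^{-1}\D_x f(h_\pm(z);0)$ is meromorphic on $\Sigma_\pm$ with a pole of order at most one at $z=0$, and — by the very choice of $\Gamma_\loc$ — no other singularity there.

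Next I would recall the elementary fact that if the coefficient matrix of a first-order linear system $\xi'=A(z)\xi$ has a pole of order at most one at a point $z_0$, then $z_0$ is a regular singular point of the system (see, e.g., \cite{CH11,PS03}). Applying this with $z_0=0$ on $\Sigma_+$ and on $\Sigma_-$ shows that the singularities of the transformed VE at $z=0$ in $\Sigma_\pm$ are regular, which is the first assertion.

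It then remains to pass from the two charts $\Sigma_\pm$ to all of $\Gamma_\loc$. On the bridging chart $\Sigma_t$ the equation coincides with the original VE \eqref{eqn:ve}, whose coefficient $\D_x f(x^\h(t);0)$ is holomorphic in $t$ on a neighborhood of $\Rset$ in $\Cset$ because both $f$ and $x^\h$ are analytic; thus $\Sigma_t$ may be chosen narrow enough to contain no singularity at all. Since the transition maps $z=\e^{\lambda_\mp t}$ between $\Sigma_t$ and $\Sigma_\pm$ are biholomorphic away from $z=0$, regularity is preserved on the overlaps, so the desingularized points $0_\pm$ (i.e. $z=0$ in $\Sigma_\pm$) are the only singular points of the transformed VE on $\Gamma_\loc$, and they are regular; hence the equation is Fuchsian on $\Gamma_\loc$.

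There is essentially no genuine obstacle here: the content of the lemma is the bookkeeping that the analyticity of $f$ and of $h_\pm$ caps the pole order at one after the substitution $z=\e^{\lambda_\mp t}$, together with the fact — already built into the construction of $\Gamma_\loc$ — that no spurious singularities are introduced. The only point requiring a little care is shrinking the neighborhoods $\Sigma_\pm$ and $\Sigma_t$ so that the relevant analytic functions remain analytic and $z=0$ is the sole singularity encountered in $\Sigma_\pm$.
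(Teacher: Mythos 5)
Your proposal is correct and follows essentially the same route as the paper: the analyticity of $\D_x f(h_\pm(z);0)$ in $\Sigma_\pm$ gives a coefficient matrix with a pole of order at most one at $z=0$, so the singularity is regular (singularity of the first kind), and since $\Gamma_\loc$ is taken so small that no other singularities occur, the transformed VE is Fuchsian. The paper states this only in one line before the lemma; your version merely spells out the same bookkeeping in more detail.
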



We are now in a position to prove Theorem~\ref{thm:main}.

\begin{proof}[Proof of Theorem~$\ref{thm:main}$]
We first see that
 $\Phi(\lambda_\mp^{-1}\log z)$ in $\Sigma_{\pm}$
 is a fundamental matrix of the transformed VE.
So the $4\times 4$ monodromy matrices $M_\pm$
 along small circles of radius $\epsilon>0$ centered at $z=0$ in $\Sigma_{\pm}$
 are computed as
\[
M_\pm=\Phi(\lambda_\mp^{-1}\log\epsilon)^{-1}\Phi(\lambda_\mp^{-1}(\log \epsilon+2\pi i)).
\]
{\color{black}
Let $e_j$ denote the vector of which the $j$-th component is one and the others are zero
 for $j=1$-$4$.
Since $\varphi_1(t)=\Phi(t)e_1$, it follows from Lemma~\ref{lem:2b} that}
\[
M_\pm e_1=e_1.
\]
To prove the theorem,
 we only have to show that $M_\pm$ are simultaneously triangularizable,
 since by Corollary 3.5 of Part~I the differential Galois group is triangularizable if so.

Suppose that $\tilde{\lambda}_1\neq\tilde{\lambda}_2$
 and $\tilde{\lambda}_3\neq\tilde{\lambda}_4$.
Then the Jacobian matrix $\D_xf(0;0)$ is diagonalizable,
 so that by Lemma~\ref{lem:2a} and condition~(C)
\begin{equation}
\begin{split}
&
\varphi_2(\lambda_-^{-1}\log z)=a_{1+}(z)z+a_{2+}(z)z^{\tilde{\lambda}_2/\lambda_-},\\
&
\varphi_2(\lambda_+^{-1}\log z)=a_{1-}(z)z+a_{2-}(z)z^{\tilde{\lambda}_3/\tilde{\lambda}_+}
\end{split}
\label{eqn:2b}
\end{equation}
{\color{black}for $|z|\ll 1$}, where $a_{j\pm}$, $j=1,2$, are certain analytic functions. 
Hence,
\begin{equation}
M_\pm e_2\in\Span\{e_1,e_2\}.
\label{eqn:2c}
\end{equation}
Moreover, {\color{black}since the first equations of \eqref{eqn:b2a} and \eqref{eqn:b2b}
 in (B3) hold,}
 there exists $v\in\Rset^4$ such that
\begin{equation}
M_\pm v\in\Span\{v,e_1,e_2\}.
\label{eqn:2d}
\end{equation}
Thus, $M_\pm$ are simultaneously triangularizable.

{\color{black}
Next assume that $\tilde{\lambda}_1=\tilde{\lambda}_2$
 but $\tilde{\lambda}_3\neq\tilde{\lambda}_4$.
If the eigenvalue $\tilde{\lambda}_1=\tilde{\lambda}_2$ is of geometric multiplicity two,
 then we can prove that $M_\pm$ are simultaneously triangularizable as in the above case.
So we assume that it is of geometric multiplicity one and algebraic multiplicity two.
Instead of the first equation of \eqref{eqn:2b} we have
\[
\varphi_2(\lambda_-^{-1}\log z)=a_{1+}(z)z+a_{2+}(z)z\log z
\]
in $|z|\ll 1$, so that Eq.~\eqref{eqn:2c} holds.
Moreover, even if not the first but second equation in \eqref{eqn:b2a} holds,
 there exists $v\in\Rset^4$ satisfying \eqref{eqn:2d} as above.
Hence, $M_\pm$ are simultaneously triangularizable.
Similarly, we can show that $M_\pm$ are simultaneously triangularizable
 when $\tilde{\lambda}_3=\tilde{\lambda}_4$ but $\tilde{\lambda}_1\neq\tilde{\lambda}_2$.

Finally, assume that $\tilde{\lambda}_1=\tilde{\lambda}_2$
 and $\tilde{\lambda}_3=\tilde{\lambda}_4$.
If the eigenvalue $\tilde{\lambda}_1=\tilde{\lambda}_2$
 and/or $\tilde{\lambda}_3=\tilde{\lambda}_4$ is of geometric multiplicity two,
 then we can prove that $M_\pm$ are simultaneously triangularizable as in the above two cases.
So we assume that they are of geometric multiplicity one and algebraic multiplicity two.
Instead of \eqref{eqn:2b} we have
\begin{align*}
&
\varphi_2(\lambda_-^{-1}\log z)=a_{1+}(z)z+a_{2+}(z)z\log z,\\
&
\varphi_2(\lambda_+^{-1}\log z)=a_{1-}(z)z+a_{2-}(z)z\log z
\end{align*}
in $|z|\ll 1$, so that Eq.~\eqref{eqn:2c} holds.
even if not the first but second equation in \eqref{eqn:b2a} holds,
 there exists $v\in\Rset^4$ satisfying \eqref{eqn:2d} as above.
Hence, $M_\pm$ are simultaneously triangularizable.}
\end{proof}


{\color{black}
At the end of this section we give simple conditions guaranteeing (B2) and (B3)
 under condition~(B1).}
 
\begin{prop}
\label{prop:2a}
{\color{black}
Under condition~{\rm(B1)}, conditions~{\rm(B2)} and {\rm(B3)} hold
 if one of the following conditions holds{\rm:}
\begin{enumerate}
\setlength{\leftskip}{-1.5em}
\item[(i)]
There exists a two-dimensional analytic invariant manifold $\M$
 containing $x = 0$ and $x^\h(t);$
\item[(ii)]
As well as $\sigma(3)=2$ and $k_2,k_3=0$,
 we can take $\varphi_1(t)=\dot{x}^\h(t)$ with $\sigma(1)=4$ and $k_1,k_4=0$
 or condition~{\rm(B2)} holds.
\end{enumerate}}
\end{prop}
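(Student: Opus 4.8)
The plan is to establish (i) and (ii) separately. In each case the asymptotic form of $x^\h(t)$ near $O$ gives (B2), while (B3) is obtained by singling out one solution of \eqref{eqn:ve} and reading off its local form from the Frobenius theory of the transformed VE at $z=0$ on $\Sigma_\pm$, which by Lemma~\ref{lem:2c} has a regular singular point there with exponents $\tilde\lambda_j/\lambda_\mp$, $j=1$-$4$. As in the proof of Theorem~\ref{thm:main} I normalize $\lambda_-=\tilde\lambda_1$ and $\lambda_+=\tilde\lambda_4$, the remaining labellings being identical, and I use throughout that $\dot x^\h(t)$ solves \eqref{eqn:ve}.

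For (i), let $\M$ be the two-dimensional analytic invariant manifold through $O$ and $x^\h(t)$. I would first note that $T_O\M$ is a $\D_xf(0;0)$-invariant plane which, since $x^\h(t)\to O$ as $t\to\pm\infty$ along eigendirections of $\D_xf(0;0)$, is spanned by one eigenvector with eigenvalue $\lambda_-\in\{\tilde\lambda_1,\tilde\lambda_2\}$ and one with $\lambda_+\in\{\tilde\lambda_3,\tilde\lambda_4\}$. Restricting \eqref{eqn:sys} to $\M$ gives an analytic planar vector field with a hyperbolic saddle at $O$ whose one-dimensional stable and unstable manifolds are analytic curves with analytically linearizable intrinsic flows; hence $x^\h(t)$ has the form \eqref{eqn:b1} with $h_\pm$ analytic and $h_\pm'(0)\neq0$, i.e. (B2). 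For (B3) I take $\varphi$ to be the second solution of the $2\times2$ restriction of \eqref{eqn:ve} to the tangent planes of $\M$ along $x^\h$, obtained from $\varphi_1=\dot x^\h$ by reduction of order; a short computation in the linearizing coordinate $z$ on $\M$ shows its expansion is $a_+(z)z^{\lambda_+/\lambda_-}+b_{1+}(z)z^{\tilde\lambda_1/\lambda_-}$ with $a_+(0)\neq0$ and no logarithm --- the absence of the logarithm following from the linearizability of the intrinsic flow on the stable curve of $\M$ --- with the $z\log z$ of the second alternative in \eqref{eqn:b2a} appearing precisely when $\lambda_-=\tilde\lambda_1=\tilde\lambda_2$ carries a nontrivial Jordan block; the chart $\Sigma_-$ is handled identically. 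Taking $b_{2\pm}\equiv0$ gives (B3). This is essentially Part~I's argument, suitably adapted.

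For (ii), I use Lemma~\ref{lem:2a} to fix a fundamental matrix $\Phi=(\varphi_1,\dots,\varphi_4)$ and the permutation $\sigma$, and put $\varphi=\varphi_3$; from $\sigma(3)=2$ and $k_2=k_3=0$ one has $\varphi_3(t)e^{-\tilde\lambda_3t}=\O(1)$ as $t\to+\infty$ and $\varphi_3(t)e^{-\tilde\lambda_2t}=\O(1)$ as $t\to-\infty$, so $\varphi_3$ is admissible in (B3) with $\lambda_+'=\tilde\lambda_3$ and $\lambda_-'=\tilde\lambda_2$. To get (B2) under the first sub-alternative, note that $\varphi_1=\dot x^\h$ with $k_1=0$ forces $\dot x^\h(t)e^{-\tilde\lambda_1t}=\O(1)$ as $t\to+\infty$, so $x^\h(t)$ lies, for $\Re\,t$ large, on an analytic invariant manifold through $O$ whose intrinsic flow linearizes analytically --- the strong-stable curve if $\tilde\lambda_1<\tilde\lambda_2$, and the local stable manifold if $\tilde\lambda_1=\tilde\lambda_2$, since no stable-stable resonances exist --- whence $x^\h(t)=h_+(e^{\lambda_-t})$ with $h_+$ analytic, $h_+'(0)\neq0$; the limit $t\to-\infty$ follows from $\sigma(1)=4$, $k_4=0$ on the strong-unstable side, and under the second sub-alternative (B2) is assumed outright. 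With (B2) in hand the transformed VE is Fuchsian by Lemma~\ref{lem:2c}; since $\varphi_3$ has leading exponent $\tilde\lambda_3/\lambda_-$ with no $t$-factor ($k_3=0$) and cannot contain $z^{\tilde\lambda_4/\lambda_-}$, which would dominate as $z\to0$, its expansion involves only $z^{\tilde\lambda_3/\lambda_-}$, $z^{\tilde\lambda_1/\lambda_-}$, $z^{\tilde\lambda_2/\lambda_-}$, and the semisimplicity recorded by $k_1=k_2=k_3=0$ pins it down to the first equation of \eqref{eqn:b2a}, with the second alternative replacing $z^{\tilde\lambda_2/\lambda_-}$ by $z\log z$ exactly when $\tilde\lambda_1=\tilde\lambda_2$; the chart $\Sigma_-$ is treated symmetrically, using $\sigma(3)=2$, to give \eqref{eqn:b2b}.

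The step I expect to be the main obstacle is the bookkeeping of logarithmic terms: in (i), verifying that the reduction-of-order integral on $\M$ produces no logarithm outside the Jordan-block sub-case; in (ii), verifying that under $k_1=k_2=k_3=0$ no resonant logarithm contaminates $\varphi_3$, which rests on $\varphi_1=\dot x^\h$ being already the clean exponent-$\tilde\lambda_1/\lambda_-$ solution together with the ordering $\tilde\lambda_1\le\tilde\lambda_2<0<\tilde\lambda_3\le\tilde\lambda_4$ restricting the admissible resonances. A subsidiary point is making the linearization argument for (B2) in (ii) uniform across the degenerate cases $\tilde\lambda_1=\tilde\lambda_2$ and $\tilde\lambda_3=\tilde\lambda_4$.
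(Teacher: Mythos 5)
Your proposal is correct and follows essentially the same route as the paper: in case (i) you work with the two-dimensional tangential restriction of the VE on $\M$ (the paper does this via the block-triangular $(\chi,\eta)$-coordinates of Part~I, Eq.~(32), you via analytic linearization on $\M$ and reduction of order from $\dot{x}^\h$), and in case (ii) you, like the paper, read (B2) off the decay rates of $\dot{x}^\h$ furnished by Lemma~\ref{lem:2a} with $\sigma(1)=4$, $k_1,k_4=0$, and obtain (B3) from the solution $\varphi_3$ with $\sigma(3)=2$, $k_2,k_3=0$ by converting growth rates into Frobenius-type expansions at the regular singular points. The extra bookkeeping you flag (strong-stable/Jordan cases and logarithmic terms) is exactly the part the paper treats as immediate or delegates to Part~I, so there is no substantive divergence.
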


\begin{proof}
First, assume condition~(i).
We recall from Part~I that the VE \eqref{eqn:ve} can be rewritten
 in certain coordinates $(\chi,\eta)\in\Rset^2\times\Rset^2$ as
\[
\begin{pmatrix}
\dot{\chi}\\
\dot{\eta}
\end{pmatrix}
=
\begin{pmatrix}
A_\chi(x^\h(t)) & A_c(x^\h(t))\\
0 & A_\eta(x^\h(t))
\end{pmatrix}
\begin{pmatrix}
\chi\\
\eta
\end{pmatrix},
\]
where $A_\chi(x),A_c(x),A_\eta(x)$ are analytic $2\times 2$ matrix functions of $x\in\Rset^4$,
 and that the tangent space of $\M$ is given by $\eta=0$.
See Section~4.1 (especially, Eq.~(32)) of Part~I.
Especially, $A_\chi(0)$ and $A_\eta(0)$ have positive and negative eigenvalues,
 say $\tilde{\lambda}_1<0<\tilde{\lambda}_3$ and $\tilde{\lambda}_2<0<\tilde{\lambda}_4$.
Hence, $\dot{x}^\h(t)$ corresponds to a solution to
\begin{equation}
\dot{\chi}=A_\chi(x^\h(t))\chi,
\label{eqn:prop2a}
\end{equation}
so that condition~(B2) holds.
Equation~\eqref{eqn:prop2a} also has a solution satisfying
\begin{align*}
&
\chi(t)e^{-\tilde{\lambda}_3 t}=\O(1)\quad\mbox{as $t\to+\infty$},\\
&
\chi(t)e^{-\tilde{\lambda}_1 t}=\O(1)\quad\mbox{as $t\to-\infty$},
\end{align*}
which mean that
\begin{align*}
&
\chi(\tilde{\lambda}_1^{-1}\log z)
 =a_+(z)z^{\tilde{\lambda}_3/\tilde{\lambda}_1}+b_+(z)z,\\
&
\chi(\tilde{\lambda}_3^{-1}\log z)
 =a_-(z)z^{\tilde{\lambda}_1/\tilde{\lambda}_3}+b_-(z)z
\end{align*}
{\color{black}for $|z|\ll 1$},
 where $a_{\pm}(z)$ and $b_{\pm}(z)$ are certain analytic functions with $a_\pm(0)\neq 0$.
This means condition~(B3).
Similar arguments can apply even if the eigenvalues of $A_\chi(0)$ and $A_\eta(0)$
 are not $\tilde{\lambda}_1,\tilde{\lambda}_3$ and $\tilde{\lambda}_2,\tilde{\lambda}_4$,
 respectively.

{\color{black}Next,} assume condition~(ii).
If condition~(B2) holds,
 then there exists a solution to the VE \eqref{eqn:ve} such that
\begin{align*}
&
\varphi_3(t)e^{-\tilde{\lambda}_3t}=\O(1)\quad\mbox{as $t\to+\infty$},\\
&
\varphi_3(t)e^{-\tilde{\lambda}_2t}=\O(1)\quad\mbox{as $t\to-\infty$},
\end{align*}
which mean condition~(B3).
On the other hand,
 if $\varphi_1(t)=\dot{x}^\h(t)$ with $\sigma(1)=4$ and $k_1,k_4=0$, then
\begin{align*}
&
\dot{x}^\h(t)e^{-\tilde{\lambda}_1t}=\O(1)\quad\mbox{as $t\to+\infty$},\\
&
\dot{x}^\h(t)e^{-\tilde{\lambda}_4t}=\O(1)\quad\mbox{as $t\to-\infty$},
\end{align*}
which mean condition~(B2) with $\lambda_-=\tilde{\lambda}_1$
 and $\lambda_+=\tilde{\lambda}_4$.
Thus, we complete the proof.
\end{proof}

\section{Analytic Conditions}
{\color{black}In this section we consider the general case of $n\ge 2$
 and extend the Melnikov method of Part I to reversible systems under assumptions~(R1)-(R4).
Here we restrict to $\Rset$ the domain on which the VE \eqref{eqn:ve} is defined.}

\subsection{Extension of Melnikov's method}

Consider the general case of $n\ge 2$ and assume (R1)-(R4).
By assumption~(R1) there exists a splitting $\Rset^{2n}=\fix(R)\oplus\fix(-R)$.
So we choose a scalar product $\langle\cdot,\cdot\rangle$ in $\Rset^{2n}$ such that
\[
\fix(-R)=\fix(R)^\bot.
\]
{\color{black}Since $f(Rx;0)+Rf(x;0)=0$, we have
\begin{equation}
\D_xf(x^\h(t);0)R+R\D_xf(x^\h(t);0)=0.
\label{eqn:2a}
\end{equation}\color{black}
It follows from \eqref{eqn:2a} that
 if $\xi(t)$ is a solution to \eqref{eqn:ve},
 then so are $\pm R\xi(-t)$ as well as $-\xi(t)$.
For \eqref{eqn:ve},
 we also say that a solution $\xi(t)$ is \emph{symmetric} and \emph{antisymmetric}
 if $\xi(t)=R\xi(-t)$ and $\xi(t)=-R\xi(-t)$, respectively,
 and show that it is symmetric and antisymmetric
 if and only if it intersects the spaces $\fix(R)$ and $\fix(-R)=\fix(R)^\bot$, respectively,
 at $t=0$. 
We easily see that $\xi=\dot{x}^\h(t)$ is antisymmetric since $x^\h(t)=Rx^\h(-t)$ so that
\[
\dot{x}^\h(t)=-R\dot{x}^\h(-t).
\]
Here we also assume the following.}


\begin{enumerate}
\setlength{\leftskip}{-0.5em}
\item[\bf(R5)]
Let $n_0<2n$ be a positive integer.
The VE \eqref{eqn:ve} has just $n_0$ linearly independent bounded solutions,
 $\xi=\varphi_1(t)\,(=\dot{x}^\h(t)),\varphi_2(t),\ldots,\varphi_{n_0}(t)$,
 such that $\varphi_j(0)\in\fix(R)$ 
 for $j=2,\ldots,{\color{black}n_0}$.
If $n_0=1$, then there is no bounded solution
 that is linearly independent of $\xi=\dot{x}^\h(t)$.
\end{enumerate}

Here by abuse of notation $\varphi_j(t)$, $j=1,\ldots,2n$,
 are different from those of Lemma~2.1
 (such abuse of notation was used in Part~I without mentioning).
Note that $\varphi_1(0)=\dot{x}^\h(0)\in\fix(-R)=\fix(R)^\bot$.
Thus, $\varphi_2(t),\ldots,\varphi_{n_0}(t)$ are symmetric but $\varphi_1(t)$ is antisymmetric.
Using Lemma~2.1 of Part~I, under assumptions~(R1)-(R5),
 we can take other linearly independent solutions $\varphi_j(t)$, $j=n_0+1,\ldots,n$,
 to the VE \eqref{eqn:ve} than those given in (R5) as follows.

\begin{lem}
\label{lem:3a}
{\color{black}
There exist linearly independent solutions $\varphi_j(t)$, $j=1,\ldots,2n$, to \eqref{eqn:ve}
 such that they satisfy} the following conditions:
\begin{equation}
\begin{array}{ll}
\displaystyle
\lim_{t\to+\infty}|\varphi_j(t)|=0,\quad
\lim_{t\to-\infty}|\varphi_j(t)|=\infty
&
\mbox{for $j=n_0+1,\ldots,n$};\\
\displaystyle
\lim_{t\to\pm\infty}|\varphi_j(t)|=\infty,\quad
\varphi_j(0)\in\fix(R)
&
\mbox{for $j=n+1$};\\
\displaystyle
\lim_{t\to\pm\infty}|\varphi_j(t)|=\infty,\quad
\varphi_j(0)\in\fix(-R)
&
\mbox{for $j=n+2,\ldots,n+n_0$};\\\displaystyle
\lim_{t\to+\infty}|\varphi_j(t)|=\infty,\quad
\lim_{t\to-\infty}|\varphi_j(t)|=0
&
\mbox{for $j=n+n_0+1,\ldots,2n$}.
\end{array}
\label{eqn:vphi1}
\end{equation}
{\color{black}Here $\varphi_j(t)$, $j=1,\ldots,n_0$, are given in (R5).}
\end{lem}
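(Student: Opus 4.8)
The plan is to sort the $2n$-dimensional solution space of \eqref{eqn:ve} according to decay at $t=\pm\infty$, using Lemma~2.1 of Part~I, and then to use the involution induced by reversibility to fix the symmetry of those solutions that blow up at both ends.

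First I would record the relevant dichotomy. By Lemma~2.1 of Part~I and the hyperbolicity of $O$, the set $E^{\s}_{+}$ of solutions of \eqref{eqn:ve} with $|\xi(t)|\to 0$ as $t\to+\infty$ and the set $E^{\u}_{-}$ of those with $|\xi(t)|\to 0$ as $t\to-\infty$ are each $n$-dimensional, and every solution not lying in $E^{\s}_{+}$ (resp.\ $E^{\u}_{-}$) has $|\xi(t)|\to\infty$ as $t\to+\infty$ (resp.\ $t\to-\infty$), there being no center direction. Hence a solution is bounded on $\Rset$ exactly when it lies in $V_{\mathrm{b}}:=E^{\s}_{+}\cap E^{\u}_{-}$, and by (R5) we have $\dim V_{\mathrm{b}}=n_{0}$ with $V_{\mathrm{b}}=\Span\{\varphi_{1}=\dot x^{\h},\varphi_{2},\dots,\varphi_{n_{0}}\}$.

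Next I would bring in reversibility. By \eqref{eqn:2a} the map $\rho\colon\xi(\cdot)\mapsto R\xi(-\cdot)$ is a linear involution on the solution space, and its $(+1)$- and $(-1)$-eigenspaces $S$ and $A$ consist precisely of the symmetric and antisymmetric solutions, i.e.\ of those with $\xi(0)\in\fix(R)$ and with $\xi(0)\in\fix(-R)$, respectively; both are $n$-dimensional. Since $t\mapsto-t$ exchanges the two ends, $\rho(E^{\s}_{+})=E^{\u}_{-}$ and $\rho(V_{\mathrm{b}})=V_{\mathrm{b}}$, and since $\varphi_{1}\in A\cap V_{\mathrm{b}}$ while $\varphi_{2},\dots,\varphi_{n_{0}}\in S\cap V_{\mathrm{b}}$, we obtain $\dim(S\cap V_{\mathrm{b}})=n_{0}-1$ and $\dim(A\cap V_{\mathrm{b}})=1$. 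I would then pick a complement $V_{+}$ of $V_{\mathrm{b}}$ in $E^{\s}_{+}$ and put $V_{-}:=\rho(V_{+})$; this $V_{-}$ is a complement of $V_{\mathrm{b}}$ in $E^{\u}_{-}$, so $V_{\mathrm{b}}\oplus V_{+}\oplus V_{-}$ is a $\rho$-invariant subspace of dimension $2n-n_{0}$, and, $\rho$ being an involution, it admits a $\rho$-invariant complement $V_{0}$, of dimension $n_{0}$. By construction $V_{0}\cap E^{\s}_{+}=V_{0}\cap E^{\u}_{-}=0$, so every nonzero solution in $V_{0}$ blows up at both ends, while every nonzero solution in $V_{+}$ (resp.\ $V_{-}$) decays at $+\infty$ (resp.\ $-\infty$) and blows up at the other end. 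Writing $V_{0}=(V_{0}\cap S)\oplus(V_{0}\cap A)$ and counting dimensions inside $S$, $n=(n_{0}-1)+(n-n_{0})+\dim(V_{0}\cap S)$, forces $\dim(V_{0}\cap S)=1$ and $\dim(V_{0}\cap A)=n_{0}-1$. Choosing $\varphi_{n_{0}+1},\dots,\varphi_{n}$ to be a basis of $V_{+}$, $\varphi_{n+1}$ a spanning vector of $V_{0}\cap S$, $\varphi_{n+2},\dots,\varphi_{n+n_{0}}$ a basis of $V_{0}\cap A$, and $\varphi_{n+n_{0}+1},\dots,\varphi_{2n}$ a basis of $V_{-}$ then yields $2n$ linearly independent solutions obeying \eqref{eqn:vphi1}, with $\varphi_{1},\dots,\varphi_{n_{0}}$ as in (R5).

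The dichotomy statements and the linear-algebra bookkeeping are routine; the step I expect to need the most care — and the only place where reversibility is essential — is the compatibility of $\rho$ with the time-reversal exchange of $E^{\s}_{+}$ and $E^{\u}_{-}$ together with $\rho(V_{\mathrm{b}})=V_{\mathrm{b}}$. This is exactly what makes $V_{-}=\rho(V_{+})$ a genuine complement of $V_{\mathrm{b}}$ in $E^{\u}_{-}$ and forces the symmetric and antisymmetric dimension counts inside $V_{0}$ to be $1$ and $n_{0}-1$, matching \eqref{eqn:vphi1}.
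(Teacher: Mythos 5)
Your proposal is correct and follows essentially the same route as the paper's proof: both rely on Lemma~2.1 of Part~I for the decay/growth dichotomy, use the map $\xi(\cdot)\mapsto R\xi(-\cdot)$ to produce the solutions decaying as $t\to-\infty$ from those decaying as $t\to+\infty$ (your $V_-=\rho(V_+)$ is the paper's $\varphi_{n+j}(t)=R\varphi_j(-t)$), symmetrize or antisymmetrize the doubly unbounded solutions, and finish with the same $\fix(R)$/$\fix(-R)$ dimension count giving exactly one symmetric and $n_0-1$ antisymmetric solutions among them. The only difference is presentational: you organize the argument through $\rho$-invariant subspaces and eigenspace decompositions, whereas the paper constructs the individual solutions explicitly, so no gap is present.
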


\begin{proof}
It follows from Lemma~2.1 of Part~I that
 there are linearly independent solutions $\varphi_j(t)$, $j=n_0+1,\ldots,n+n_0$, to \eqref{eqn:ve}
 such that they are linearly independent of $\varphi_j(t)$, $j=1,\ldots,n_0$,
 and satisfy the first, second and third conditions in \eqref{eqn:vphi1}
 except that $\varphi_j(0)\in\fix(R)$ or $\fix(-R)$ for $j=n_0+1,\ldots,n+n_0$.
Note that other linearly independent solutions with $\xi(0)\in\fix(R)$
 than $\varphi_j(t)$, $j=2,\ldots,n_0$, do not converge to $0$ as $t\to+\infty$ or $-\infty$.
Let
\begin{equation}
\varphi_{n+j}(t)=R\varphi_j(-t),\quad
j=n_0+1,\ldots,n.
\label{eqn:vphi2}
\end{equation}
We easily see that they satisfy the fourth condition in \eqref{eqn:vphi1}
 and $\varphi_j(t)$, $j=1,\ldots,2n$ are linearly independent.

Let $\xi=\varphi(t)$ be a solution to \eqref{eqn:ve}.
If $\varphi(0)\not\in\fix(-R)$ and $\varphi(0)\not\in\fix(R)$,
 then $\xi(t)=\varphi(t)+R\varphi(-t)$ and $\xi(t)=\varphi(t)-R\varphi(-t)$
 satisfy $\xi(0)\in\fix(R)$ and $\xi(0)\in\fix(-R)$, respectively.
Hence, we choose $\varphi_j(t)$, $j=n+1,\ldots,n+n_0$,
 such that $\varphi_j(0)\in\fix(R)\cup\fix(-R)$.
Moreover, the subspace spanned by $\varphi_j(0)$ and $\varphi_{n+j}(0)$, $j=n_0+1,\ldots,n$,
 intersects each of $\fix(R)$ and $\fix(-R)$ in $(n-n_0)$-dimensional subspaces.
Thus, one of $\varphi_j(t)$, $j=n+1,\ldots,n+n_0$, is contained in $\fix(R)$,
 and the others are contained in $\fix(-R)$
 since $\varphi_1(0)\in\fix(-R)$, $\varphi_j(0)\in\fix(R)$, $j=2,\ldots,n_0$,
 and $\dim\fix(R)=\dim\fix(-R)=n$.
This completes the proof.
\end{proof}

Let $\Phi(t)=(\varphi_1(t),\ldots,\varphi_{2n}(t))$.
Then $\Phi(t)$ is a fundamental matrix to \eqref{eqn:ve}.
Define $\psi_j(t)$, $j=1,\ldots,2n$, by
\begin{equation}
\langle\psi_j(t),\varphi_k(t)\rangle=\delta_{jk},\quad
j,k=1,\ldots,2n,
\label{eqn:psi}
\end{equation}
where $\delta_{jk}$ is Kronecker's delta.
The functions $\psi_j(t)$, $j=1,\ldots,n$, can be obtained
 by the formula $\Psi(t)=(\Phi^\ast(t))^{-1}$,
 where $\Psi(t)=(\psi_1(t),\ldots,\psi_n(t))$
 and $\Phi^\ast(t)$ is the transpose matrix of $\Phi(t)$.
It immediately follows from (R5) and  \eqref{eqn:vphi1}-\eqref{eqn:psi} that
\[
\begin{array}{ll}
\displaystyle
 \lim_{t\rightarrow\pm\infty}|\psi_j(t)|=\infty,\quad
\psi_j(0)\in\fix(-R^\ast) & \mbox{for $j=1$};\\
\displaystyle
 \lim_{t\rightarrow\pm\infty}|\psi_j(t)|=\infty,\quad
\psi_j(0)\in\fix(R^\ast) & \mbox{for $j=2,\ldots,n_0$};\\
\displaystyle
 \lim_{t\rightarrow+\infty}|\psi_j(t)|=\infty,\quad
 \lim_{t\rightarrow-\infty}|\psi_j(t)|=0
&
\mbox{for $j=n_0+1,\ldots,n$};\\
\displaystyle
 \lim_{t\rightarrow\pm\infty}|\psi_j(t)|=0,\quad
\psi_j(0)\in\fix(R^\ast)
&
\mbox{for $j=n+1$};\\
\displaystyle
 \lim_{t\rightarrow\pm\infty}|\psi_j(t)|=0,\quad
\psi_j(0)\in\fix(-R^\ast)
&
\mbox{for $j=n+2,\ldots,n+n_0$};\\
\displaystyle
 \lim_{t\rightarrow \infty}|\psi_j(t)|=0,\quad
 \lim_{t\rightarrow -\infty}|\psi_j(t)|=\infty
&
\mbox{for $j=n+n_0+1,\ldots,2n$}
\end{array}
\]
and
\begin{equation}
\psi_{n+j}(t)=R^\ast\psi_j(-t),\quad
j=n_0+1,\ldots,n.
\label{eqn:psi2}
\end{equation}
Moreover, $\Psi(t)$ is a fundamental matrix to the adjoint equation
\begin{equation}
\dot{\xi}=-\D_x f(x^\h(t);0)^\ast \xi.
\label{eqn:ave}
\end{equation}
See Section~2.1 of Part I.
Note that if $\xi(t)$ is a solution to \eqref{eqn:ave},
 then so are $\pm R^\ast\xi(-t)$ as well as $-\xi(t)$.

As in Part~I,
 we look for a symmetric homoclinic orbit of the form
\begin{equation}
x=x^\h(t)+\sum_{j=1}^{n_0-1}\alpha_j\varphi_{j+1}(t)+\O(\sqrt{|\alpha|^4+|\mu|^2})
\label{eqn:hoa}
\end{equation}
satisfying $x(0)\in\fix(R)$ in (\ref{eqn:sys}) when $\mu\neq 0$,
 where $\alpha=(\alpha_1,\ldots,\alpha_{n_0-1})$.
Here the $\O(\alpha)$-terms are eliminated in \eqref{eqn:hoa} if $n_0=1$.
Let $\kappa$ be a positive real number such that $\kappa<\frac{1}{4}\lambda_1$,
and define two Banach spaces as
\[
\begin{split}
\hat{\Z}^0=\{z\in C^0(\Rset,\Rset^n)\mid&
 \sup_{t\ge 0}|z(t)|e^{\kappa|t|}<\infty,z(t)=-Rz(-t)\},\\
\hat{\Z}^1=\{z\in C^1(\Rset,\Rset^n)\mid&
\sup_{t\ge 0}|z(t)|e^{\kappa|t|},
 \sup_{t\ge 0}|\dot{z}(t)|e^{\kappa|t|}<\infty,z(t)=Rz(-t)\},
\end{split}
\]
where the maximum of the suprema is taken as a norm of each space.
We have the following result as in Lemma~2.3 of Part~I.

\begin{lem}
\label{lem:4a}
The nonhomogeneous VE,
\begin{equation}
\dot{\xi}=\D_x f(x^\h(t);0)\xi+\eta(t)
\label{eqn:nhve}
\end{equation}
with $\eta\in\hat{\Z}^0$, has a solution in $\hat{\Z}^1$ if and only if
\begin{equation}
\int_{-\infty}^{\infty}
 \langle\psi_{n+j}(t),\eta(t)\rangle\,dt=0,\quad
j=2,\ldots,n_0.
\label{eqn:lem2b}
\end{equation}
Moreover, if condition \eqref{eqn:lem2b} holds,
 then there exists a unique solution to \eqref{eqn:nhve}
 satisfying $\langle\psi_j(0),\xi(0)\rangle=0$, $j=1,\ldots,n_0$, in $\hat{\Z}^1$.
\end{lem}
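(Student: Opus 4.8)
The plan is to deduce Lemma~\ref{lem:4a} from its non-reversible counterpart, Lemma~2.3 of Part~I, by using the reversible structure to cut down the solvability and normalization conditions. Write $A(t)=\D_x f(x^\h(t);0)$, and let $\Z^0\supset\hat{\Z}^0$ and $\Z^1\supset\hat{\Z}^1$ be the exponentially weighted spaces of Part~I (with the same $\kappa$) but without the reversibility constraints. The first step is to observe that for $\eta\in\hat{\Z}^0$ the map $\xi(t)\mapsto R\xi(-t)$ carries solutions of \eqref{eqn:nhve} to solutions: if $\dot\xi=A(t)\xi+\eta$ then, using that $\pm R\xi(-t)$ solve \eqref{eqn:ve} whenever $\xi$ does together with $-R\eta(-t)=\eta(t)$,
\[
\frac{\d}{\d t}R\xi(-t)=-R\dot\xi(-t)=-RA(-t)\xi(-t)-R\eta(-t)=A(t)R\xi(-t)+\eta(t).
\]
Hence, if $\xi\in\Z^1$ solves \eqref{eqn:nhve}, then so does $R\xi(-t)\in\Z^1$, and therefore the symmetric part $\half\bigl(\xi(t)+R\xi(-t)\bigr)$ lies in $\hat{\Z}^1$ and solves \eqref{eqn:nhve} as well. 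Since also $\hat{\Z}^0\subset\Z^0$, this shows that \eqref{eqn:nhve} has a solution in $\hat{\Z}^1$ if and only if it has one in $\Z^1$.

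By Lemma~2.3 of Part~I, \eqref{eqn:nhve} with $\eta\in\Z^0$ has a solution in $\Z^1$ if and only if $\int_{-\infty}^\infty\langle\psi_j(t),\eta(t)\rangle\,\d t=0$ for the $n_0$ bounded solutions of the adjoint equation \eqref{eqn:ave}, namely $\psi_{n+1},\ldots,\psi_{n+n_0}$. It remains to check that for $\eta\in\hat{\Z}^0$ the condition for $j=n+1$ is automatic, so that exactly \eqref{eqn:lem2b} survives. For this I would first show that $\psi_{n+1}$ is symmetric, $\psi_{n+1}(t)=R^\ast\psi_{n+1}(-t)$: the function $R^\ast\psi_{n+1}(-t)$ is again a bounded solution of \eqref{eqn:ave} and takes the value $R^\ast\psi_{n+1}(0)=\psi_{n+1}(0)$ at $t=0$ because $\psi_{n+1}(0)\in\fix(R^\ast)$; the $n_0$ vectors $\psi_{n+1}(0),\ldots,\psi_{n+n_0}(0)$ are linearly independent by \eqref{eqn:psi}, so evaluation at $t=0$ is injective on the space of bounded solutions of \eqref{eqn:ave}, which forces $R^\ast\psi_{n+1}(-t)=\psi_{n+1}(t)$. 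Since the chosen scalar product makes $R$ an orthogonal involution (so $R^\ast=R$), the substitution $t\mapsto-t$ then gives
\[
\int_{-\infty}^\infty\langle\psi_{n+1}(t),\eta(t)\rangle\,\d t
=\int_{-\infty}^\infty\langle R\psi_{n+1}(t),-R\eta(t)\rangle\,\d t
=-\int_{-\infty}^\infty\langle\psi_{n+1}(t),\eta(t)\rangle\,\d t,
\]
so the integral vanishes. Thus the solvability condition is precisely \eqref{eqn:lem2b}.

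For the normalization, observe that two solutions of \eqref{eqn:nhve} in $\hat{\Z}^1$ differ by a \emph{symmetric} bounded solution of \eqref{eqn:ve}, i.e.\ by an element of $\Span\{\varphi_2(t),\ldots,\varphi_{n_0}(t)\}$, since $\varphi_1=\dot x^\h$ is antisymmetric and is thereby excluded. Of the $n_0$ conditions $\langle\psi_j(0),\xi(0)\rangle=0$, $j=1,\ldots,n_0$, the one for $j=1$ holds automatically for every $\xi\in\hat{\Z}^1$: such $\xi$ is symmetric, so $\xi(0)\in\fix(R)$, while $\psi_1(0)\in\fix(-R^\ast)=\fix(-R)=\fix(R)^\bot$. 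The remaining $n_0-1$ conditions, for $j=2,\ldots,n_0$, determine the element of $\Span\{\varphi_2,\ldots,\varphi_{n_0}\}$ uniquely because $\langle\psi_j(0),\varphi_k(0)\rangle=\delta_{jk}$ for $j,k=2,\ldots,n_0$ by \eqref{eqn:psi}. This yields existence and uniqueness of the normalized solution in $\hat{\Z}^1$ once \eqref{eqn:lem2b} holds, completing the argument.

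The purely technical part — that the variation-of-constants solution actually belongs to the weighted $C^1$ space $\Z^1$ with the prescribed decay rate $\kappa<\fourth\lambda_1$ — is identical to Lemma~2.3 of Part~I and I would simply quote it, the paired exponential estimates on $\varphi_j(t)$ and $\psi_j(s)$ being unaffected by reversibility. The only new point, and the one I would treat most carefully, is the symmetry bookkeeping: the solvability condition drops from $n_0$ to $n_0-1$ because $\psi_{n+1}$ is $R^\ast$-symmetric, and on the other side the space of symmetric bounded homogeneous solutions has dimension $n_0-1$ while the $j=1$ normalization is automatic. Making these two ``off by one'' reductions match up is the heart of the argument.
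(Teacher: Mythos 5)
Your argument is correct, but the sufficiency part takes a genuinely different route from the paper. The paper proves existence constructively: assuming \eqref{eqn:lem2b}, it splits the full-line conditions into half-line ones using the antisymmetry of $\psi_{n+j}$, $j=2,\ldots,n_0$, and the symmetry of $\psi_{n+1}$ (the same fact you use to kill the $\psi_{n+1}$-condition), writes down an explicit variation-of-constants solution $\hat{\xi}(t)$ with integration limits chosen per index block, and then verifies $\hat{\xi}(0)\in\fix(R)$ directly from \eqref{eqn:vphi2} and \eqref{eqn:psi2}, so that $\hat{\xi}\in\hat{\Z}^1$. You instead quote the non-reversible Fredholm alternative (Lemma~2.3 of Part~I) to get a solution in the unconstrained weighted space $\Z^1$, observe that the $\psi_{n+1}$-orthogonality is automatic for antisymmetric $\eta$ because $\psi_{n+1}(t)=R^\ast\psi_{n+1}(-t)$ and $R^\ast=R$ is orthogonal for the chosen scalar product, and then symmetrize, $\xi\mapsto\half\bigl(\xi(t)+R\xi(-t)\bigr)$, using that $R\xi(-t)$ again solves \eqref{eqn:nhve} when $\eta(t)=-R\eta(-t)$. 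This is leaner: you never need the half-line splitting of the integrals, and the decay estimates are inherited wholesale from Part~I. What it costs is constructiveness — the paper's explicit $\hat{\xi}$ is exactly the solution operator used later (e.g.\ for $\xi_1^\alpha(t)$ in Section~4) — and it makes the proof depend on Lemma~2.3 of Part~I as a black box, so you should check that its statement really yields existence in the weighted $C^1$ space for arbitrary $\eta\in\Z^0$ with the basis $\{\varphi_j,\psi_j\}$ adapted here (the Fredholm condition itself is basis-independent, so this is harmless). Your treatment of the necessity direction and of the normalization (the $j=1$ condition being automatic since $\xi(0)\in\fix(R)$ and $\psi_1(0)\in\fix(R)^\bot$, and the correction in $\Span\{\varphi_2,\ldots,\varphi_{n_0}\}$ being fixed by $\langle\psi_j(0),\varphi_k(0)\rangle=\delta_{jk}$) matches the paper's in substance; one small remark is that your appeal to linear independence of $\psi_{n+1}(0),\ldots,\psi_{n+n_0}(0)$ to prove $R^\ast\psi_{n+1}(-t)=\psi_{n+1}(t)$ is unnecessary — uniqueness of solutions of the linear initial value problem already gives it.
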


\begin{proof}
As in Lemma~2.2 of Part I, we see that if $z\in\hat{\Z}^1$, then
\begin{equation}
\int_{-\infty}^\infty\langle\psi_{n+j}(t),\dot{z}(t)-\D_xf(x^\h(t);0)z(t)\rangle\d t=0,\quad
j=2,\ldots,n_0.
\label{eqn:lem2c}
\end{equation}
Hence, if Eq.~\eqref{eqn:nhve} has a solution $\xi\in\hat{\Z}^1$, then
\[
\int_{-\infty}^\infty\langle\psi_{n+j}(t),\eta(t)\rangle\d t
=\int_{-\infty}^\infty\langle\psi_{n+j}(t),\dot{\xi}(t)-\D_xf(x^\h(t);0)\xi(t)\rangle\d t=0
\]
for $j=2,\ldots,n_0$.
Thus, the necessity of the first part is proven.

Assume that condition~\eqref{eqn:lem2b} holds.
We easily see that for $\eta\in\hat{\Z}^0$
\begin{align*}
\int_{-\infty}^0\langle\psi_{n+j}(t),\eta(t)\rangle\d t
=& \int_{-\infty}^0\langle-R^\ast\psi_{n+j}(-t),-R\eta(-t)\rangle\d t\\
=& \int_0^{\infty}\langle \psi_{n+j}(t),\eta(t)\rangle\d t=0,\quad
j=2,\ldots,n_0,
\end{align*}
while
\begin{align*}
\int_{-\infty}^0\langle\psi_{n+1}(t),\eta(t)\rangle\d t
=& \int_{-\infty}^0\langle R^\ast\psi_{n+1}(-t),-R\eta(-t)\rangle\d t\\
=& -\int_0^{\infty}\langle \psi_{n+1}(t),\eta(t)\rangle\d t.
\end{align*}
Hence,
\begin{align*}
\hat{\xi}(t)=& \left(\sum_{j=1}^{n_0}+\sum_{j=n+2}^{n+n_0}\right)
 \varphi_j(t)\int_0^t\langle\psi_j(s),\eta(s)\rangle\d s\\
& +\sum_{j=n_0+1}^{n+1}\varphi_j(t)\int_{-\infty}^t\langle\psi_j(s),\eta(s)\rangle\d s
 -\sum_{j=n+n_0+1}^{2n}\varphi_j(t)\int_t^\infty\langle\psi_j(s),\eta(s)\rangle\d s
\end{align*}
is a solution to \eqref{eqn:nhve} and contained in $\hat{\Z}^1$
 since by \eqref{eqn:vphi2} and \eqref{eqn:psi2}
\begin{align*}
\hat{\xi}(0)=
\sum_{j=n_0+1}^{n+1}&\varphi_j(0)\int_{-\infty}^0\langle\psi_j(s),\eta(s)\rangle\d s\\
& -\sum_{j=n+n_0+1}^{2n}\varphi_j(0)\int_0^\infty\langle\psi_j(s),\eta(s)\rangle\d s\\
=\sum_{j=n_0+1}^{n+1}&\varphi_j(0)\int_{-\infty}^0\langle\psi_j(s),\eta(s)\rangle\d s\\
& +\sum_{j=n_0+1}^{n}R\varphi_j(0)\int_0^\infty\langle R^\ast\psi_j(-s),R\eta(-s)\rangle\d s\\
=\sum_{j=n_0+1}^{n}&(\varphi_j(0)+R\varphi_j(0))\int_{-\infty}^0\langle\psi_j(s),\eta(s)\rangle\d s\\
& +\varphi_{n+1}(0)\int_{-\infty}^0\langle\psi_{n+1}(s),\eta(s)\rangle\d s
\in\fix(R).
\end{align*}
The sufficiency of the first part is thus proven.

We turn to the second part.
Obviously, $\langle\psi_j(0),\hat{\xi}(0)\rangle=0$, $j=1,\ldots,n_0$.
In addition, if $\xi=\xi(t)$ is a solution to \eqref{eqn:nhve}, then so is $\xi=R\xi(-t)$,
 and if $\xi(t)\in\hat{\Z}^1$, then $\xi(0)\in\fix(R)$ so that $\langle\psi_1(0),\xi(0)\rangle=0$
 by $\fix(-R)^\bot=\fix(R)$.
Moreover, any solution to \eqref{eqn:nhve} is represented
 as $\xi(t)=\hat{\xi}(t)+\sum_{j=1}^{2n}d_j\varphi_j(t)$,
 where $d_j\in\Rset$, $j=1,\ldots,n$, are constants,
 but one has $d_j=0$, $j=1,\ldots,2n$, if it is contained in $\hat{\Z}^1$
 and satisfies $\langle\psi_j(0),\xi(0))\rangle=0$, $j=2,\ldots,n_0$.
This completes the proof.
\end{proof}

Let
\[
\hat{\Z}_0^1=\{z\in\hat{\Z}^1\mid\langle\psi_j(0),z(0)\rangle=0,j=1,\ldots,n_0\}\subset\hat{\Z}^1,
\]
which is also a Banach space. 
Define a differentiable function
 $F:\hat{\Z}_0^1\times\Rset^{n_0-1}\times\Rset\to\hat{\Z}^0$ as
\begin{align}
F(z;\alpha,\mu)(t)
=&\frac{\d}{\d t}\left(x^\h(t)+z(t)+\sum_{j=1}^{n_0-1}\alpha_j\varphi_{j+1}(t)\right)\notag\\
& -f\left(x^\h(t)+z(t)+\sum_{j=1}^{n_0-1}\alpha_j\varphi_{j+1}(t);\mu\right).
\label{eqn:F}
\end{align}
Note that for $z\in\hat{\Z}_0^1$
\begin{align*}
RF(z;\alpha,\mu)(-t)
=&\left(R\dot{x}^\h(-t)+R\dot{z}(-t)+\sum_{j=1}^{n_0-1}\alpha_jR\dot{\varphi}_{j+1}(-t)\right)\\
&-Rf\left(x^\h(-t)+z(-t)+\sum_{j=1}^{n_0-1}\alpha_j\varphi_{j+1}(-t);\mu\right)\\
=&-\left(\dot{x}^\h(t)+\dot{z}(t)+\sum_{j=1}^{n_0-1}\alpha_j\dot{\varphi}_{j+1}(t)\right)\\
& +f\left(x^\h(t)+z(t)+\sum_{j=1}^{n_0-1}\alpha_j\varphi_{j+1}(t);\mu\right)
=-F(z;\alpha,\mu)(t).
\end{align*}
A solution $z\in\hat{\Z}_0^1$ to
\[
F(z;\alpha,\mu)=0
\]
for $(\alpha,\mu)$ fixed gives a symmetric homoclinic orbit to $x=0$.

We now proceed as in Section~2.1 of Part~I
 with taking the reversibility of \eqref{eqn:sys} into account.
Define a projection $\Pi:\hat{\Z}^0\rightarrow\hat{\Z}^1$ by
\[
\Pi z(t)
=q(t)\sum_{j=2}^{n_0}\left(\int_{-\infty}^\infty
 \langle\psi_{n+j}(\tau),z(\tau)\rangle\,\d\tau\right)\varphi_{n+j}(t),
\]
where $q:\Rset\rightarrow\Rset$ is a continuous function satisfying
\begin{equation}
\sup_t|q(t)|e^{\kappa|t|}<\infty,\quad
q(t)=q(-t)\quad\mbox{and}\quad
\int_{-\infty}^\infty q(t)\d t=1.
\label{eqn:q}
\end{equation}
Note that for $z\in\hat{\Z}^0$
\[
\int_{-\infty}^\infty \langle\psi_{n+1}(\tau),z(\tau)\rangle\,\d\tau=0.
\]
Using Lemma~\ref{lem:4a} and the implicit function theorem,
 we can show that there are a neighborhood $U$ of $(\alpha,\mu)=(0,0)$
 and a differentiable function $\bar{z}:U\rightarrow\hat{\Z}_0^1$
 such that $\bar{z}(0,0)=0$ and 
\begin{equation}
(\id-\Pi)F(\bar{z}(\alpha,\mu);\alpha,\mu)=0
\label{eqn:F1a}
\end{equation}
for $(\alpha,\mu)\in U$, where ``$\id$'' represents the identity.

Let
\begin{equation}
\bar{F}_j(\alpha,\mu)=\int_{-\infty}^\infty
 \langle\psi_{n+j+1}(t),F(\bar{z}(\alpha,\mu);\alpha,\mu)(t)\rangle\,\d t,\quad
j=1,\ldots,n_0-1.
\label{eqn:bF1}
\end{equation}
We can prove the following theorem as in Theorem 2.4 of Part~I
 (see also Theorem~5 of \cite{G92}).

\begin{thm}
\label{thm:ma}
Under assumptions~{\rm(R1)-(R5)} with $n_0\ge 1$,
 suppose that $\bar{F}(0;0)=0$.
Then for each $(\alpha,\mu)$ sufficiently close to $(0,0)$
 Eq.~\eqref{eqn:sys} admits a unique symmetric homoclinic orbit to the origin
  of the form \eqref{eqn:hoa}
\end{thm}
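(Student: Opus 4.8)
The plan is to set up a Lyapunov–Schmidt reduction for the operator $F$ defined in \eqref{eqn:F}, exactly mirroring the argument of Theorem~2.4 of Part~I but carried out on the symmetric Banach spaces $\hat{\Z}^0$, $\hat{\Z}^1$ and $\hat{\Z}_0^1$ rather than the full ones. First I would observe that $F$ maps $\hat{\Z}_0^1\times\Rset^{n_0-1}\times\Rset$ into $\hat{\Z}^0$: differentiability and the decay estimate are routine, while the antisymmetry relation $RF(z;\alpha,\mu)(-t)=-F(z;\alpha,\mu)(t)$ displayed just above the statement shows that $F(z;\alpha,\mu)\in\hat{\Z}^0$ indeed (its values obey $w(t)=-Rw(-t)$), using that $z\in\hat{\Z}^1$ is symmetric, $x^\h$ is symmetric, and $\varphi_{j+1}$ ($j=1,\dots,n_0-1$) are symmetric by (R5). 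Then I would compute the Fréchet derivative $\D_z F(0;0,0)$, which sends $z\mapsto \dot z-\D_xf(x^\h(t);0)z$, i.e.\ the linearized (homogeneous) VE operator on $\hat{\Z}_0^1$.

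The heart of the proof is to show that $(\id-\Pi)\circ\D_z F(0;0,0):\hat{\Z}_0^1\to(\id-\Pi)\hat{\Z}^0$ is a Banach-space isomorphism. Surjectivity and the kernel computation are precisely Lemma~\ref{lem:4a}: given $\eta\in\hat{\Z}^0$, the projected equation $(\id-\Pi)(\dot z-\D_xf(x^\h;0)z)=(\id-\Pi)\eta$ is solvable because subtracting $\Pi\eta$ kills exactly the obstruction integrals $\int\langle\psi_{n+j},\cdot\rangle\,\d t$ for $j=2,\dots,n_0$ (here one uses $\int\langle\psi_{n+1},\eta\rangle\,\d t=0$ automatically on $\hat{\Z}^0$, as noted right after the definition of $\Pi$), and Lemma~\ref{lem:4a} then produces a unique solution in $\hat{\Z}_0^1$ normalized by $\langle\psi_j(0),z(0)\rangle=0$, $j=1,\dots,n_0$; injectivity on $\hat{\Z}_0^1$ follows since the only bounded symmetric solutions of the homogeneous VE are spanned by $\varphi_2,\dots,\varphi_{n_0}$ (by (R5), $\varphi_1=\dot x^\h$ being antisymmetric is excluded), and all of these are killed by the normalization constraints defining $\hat{\Z}_0^1$. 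With the isomorphism in hand, the implicit function theorem applied to $(\id-\Pi)F=0$ yields the neighborhood $U$ of $(\alpha,\mu)=(0,0)$ and the differentiable branch $\bar z:U\to\hat{\Z}_0^1$ with $\bar z(0,0)=0$ satisfying \eqref{eqn:F1a}; this is the content already asserted in the paragraph preceding \eqref{eqn:bF1}.

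Finally I would close the reduction: the full equation $F(z;\alpha,\mu)=0$ with $z=\bar z(\alpha,\mu)$ is equivalent, on the complementary finite-dimensional range of $\Pi$, to the vanishing of the bifurcation functions $\bar F_j(\alpha,\mu)$, $j=1,\dots,n_0-1$, from \eqref{eqn:bF1}. Indeed $\Pi F(\bar z(\alpha,\mu);\alpha,\mu)$ is a linear combination of the $\varphi_{n+j}(t)$, $j=2,\dots,n_0$, with coefficients proportional to the integrals $\int\langle\psi_{n+j}(t),F(\bar z;\alpha,\mu)(t)\rangle\,\d t$, and since the $\varphi_{n+j}$ are linearly independent this whole term vanishes iff $\bar F_j(\alpha,\mu)=0$ for all $j$. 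Under the hypothesis $\bar F(0;0)=0$ (note $\bar F(\alpha,\mu)=(\bar F_1,\dots,\bar F_{n_0-1})$; at $(\alpha,\mu)=(0,0)$ this holds automatically since $F(0;0,0)=0$, and the hypothesis is the substantive statement when $n_0=1$, where no $\alpha$-variables occur and the symmetric homoclinic orbit persists), for each $(\alpha,\mu)$ near $(0,0)$ the function $z=\bar z(\alpha,\mu)$ is the unique small solution in $\hat{\Z}_0^1$, and by Lemma~\ref{lem:4a} this normalized solution is unique among all of $\hat{\Z}^1$; hence $x=x^\h(t)+\bar z(\alpha,\mu)(t)+\sum_{j=1}^{n_0-1}\alpha_j\varphi_{j+1}(t)$ is the unique symmetric homoclinic orbit of the asserted form \eqref{eqn:hoa}, with $x(0)=x^\h(0)+\bar z(\alpha,\mu)(0)+\sum\alpha_j\varphi_{j+1}(0)\in\fix(R)$ because each summand lies in $\fix(R)$. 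The main obstacle is the isomorphism step: one must verify carefully that restricting everything to the symmetric/antisymmetric subspaces does not spoil the Fredholm structure inherited from Part~I — in particular that the obstruction space is spanned exactly by the symmetric $\psi_{n+j}$, $j=2,\dots,n_0$ (the antisymmetric $\psi_{n+1}$ dropping out automatically), which is where the reversibility reduces the codimension and is exactly what Lemma~\ref{lem:4a} was arranged to provide.
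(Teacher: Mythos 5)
Your overall scheme is the one the paper intends: the paper gives no standalone proof of this theorem (it defers to Theorem~2.4 of Part~I), and the ingredients you assemble --- the antisymmetry computation showing $F$ maps into $\hat{\Z}^0$, the Fr\'echet derivative $z\mapsto\dot z-\D_xf(x^\h(t);0)z$, solvability and normalized uniqueness from Lemma~\ref{lem:4a}, the projection $\Pi$, the implicit function theorem producing $\bar z(\alpha,\mu)$ as in \eqref{eqn:F1a}, and the identification of $\Pi F(\bar z;\alpha,\mu)=0$ with the vanishing of the functions \eqref{eqn:bF1} --- are exactly the Lyapunov--Schmidt reduction set up in Section~3.1. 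The isomorphism step, including injectivity via the constraints $\langle\psi_j(0),z(0)\rangle=0$, $j=1,\ldots,n_0$, and the observation that the $\psi_{n+1}$-obstruction drops out on $\hat{\Z}^0$, is correctly handled.

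The gap is in your last step. You prove (correctly) that $x=x^\h+\bar z(\alpha,\mu)+\sum_{j}\alpha_j\varphi_{j+1}$ solves \eqref{eqn:sys} if and only if $\bar F_j(\alpha,\mu)=0$ for $j=1,\ldots,n_0-1$, but you then assert that this $x$ is a symmetric homoclinic orbit \emph{for every} $(\alpha,\mu)$ near $(0,0)$, discarding the bifurcation equations. For $n_0\ge 2$ this cannot be right: $\bar z(\alpha,\mu)$ solves only the projected equation \eqref{eqn:F1a}, and unless $\bar F(\alpha,\mu)=0$ the residual $\Pi F(\bar z;\alpha,\mu)(t)=q(t)\sum_{j=1}^{n_0-1}\bar F_j(\alpha,\mu)\varphi_{n+j+1}(t)$ is nonzero, so $x$ is not a solution at all; indeed, in the saddle-node situation of Theorem~\ref{thm:ma2} no symmetric homoclinic orbit of the form \eqref{eqn:hoa} exists for $\mu$ on one side of $0$, which contradicts an unconditional existence claim. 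The hypothesis ``$\bar F(0;0)=0$'' must be read as the condition $\bar F(\alpha;\mu)=0$ at the parameter values under consideration (a typo-level condensation of Part~I, Theorem~2.4, where existence of an orbit of the form \eqref{eqn:hoa} is tied to vanishing of the bifurcation function); this is also how the theorem is used later, e.g.\ in the proof of Theorem~\ref{thm:ma3} one first solves $\bar F_1=0$ along a curve and only then invokes Theorem~\ref{thm:ma}. Your parenthetical ``the hypothesis holds automatically'' thus converts a correct conditional statement into a false unconditional one; your argument is complete as written only for $n_0=1$ (where $\Pi=0$ and there are no bifurcation equations), and for $n_0\ge2$ you must carry $\bar F(\alpha,\mu)=0$ into the conclusion, after which the uniqueness part via Lemma~\ref{lem:4a} goes through as you describe.
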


Henceforth we set $m=1$ and apply Theorem~\ref{thm:ma}
 to obtain persistence and bifurcation theorems for symmetric homoclinic orbits
 in \eqref{eqn:sys} with $n\ge 2$, as in Sections~2.2 and 2.3 of Part~I.

\subsection{Persistence and bifurcations of symmetric homoclinic orbits}

We first assume that $n_0=1$,
 which means that condition~(C) does not hold.
Since $\Pi z=0\in\hat{\Z}^0$ for $z\in\hat{\Z}^0$
 and Eq.~\eqref{eqn:F1a} has a solution $\bar{z}(\mu)$ on a neighborhood $U$ of $\mu=0$,
 we immediately obtain the following result from the above argument,
 as in Theorem~2.5 of Part~I.
 
\begin{thm}
\label{thm:ma1}
Under assumptions~{\rm(R1)-(R5)} with $n_0=1$,
 there exists a symmetric homoclinic orbit on some open interval $I\subset\Rset$ including $\mu=0$.
\end{thm}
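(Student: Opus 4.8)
The plan is to run the Lyapunov--Schmidt reduction already set up just above, now in the degenerate case $n_0=1$, where it trivializes. When $n_0=1$ the index set $\{2,\dots,n_0\}$ is empty, so the projection $\Pi:\hat{\Z}^0\to\hat{\Z}^1$ vanishes identically, the variable $\alpha$ is absent, and the compatibility conditions \eqref{eqn:lem2b} in Lemma~\ref{lem:4a} are vacuous. Hence the linear map $\D_zF(0;0):\hat{\Z}_0^1\to\hat{\Z}^0$, $\zeta\mapsto\dot{\zeta}-\D_xf(x^\h(t);0)\zeta$, is a bounded bijection by Lemma~\ref{lem:4a}, so it is a Banach-space isomorphism by the open mapping theorem, and the implicit function theorem applied to $F$ at $(z,\mu)=(0,0)$ furnishes an open interval $I\subset\Rset$ containing $\mu=0$ together with a differentiable map $\bar{z}:I\to\hat{\Z}_0^1$ with $\bar{z}(0)=0$ and $(\id-\Pi)F(\bar{z}(\mu);\mu)=0$ for $\mu\in I$. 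Since $\Pi\equiv 0$, this is exactly $F(\bar{z}(\mu);\mu)=0$.

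Next I would check that $x_\mu(t):=x^\h(t)+\bar{z}(\mu)(t)$ is a symmetric homoclinic orbit of \eqref{eqn:sys} at the parameter value $\mu$. By $F(\bar{z}(\mu);\mu)=0$ the curve $x_\mu$ solves \eqref{eqn:sys}. Membership $\bar{z}(\mu)\in\hat{\Z}^1$ forces $|\bar{z}(\mu)(t)|\,e^{\kappa|t|}$ to stay bounded, so $\bar{z}(\mu)(t)\to 0$ as $t\to\pm\infty$; together with $x^\h(t)\to 0$ and the fact that, by (R3), the origin remains a hyperbolic saddle for $\mu$ close to $0$, this shows that $x_\mu$ lies on both the stable and the unstable manifold of $O$ and is therefore homoclinic. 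The defining constraint $z(t)=Rz(-t)$ built into $\hat{\Z}^1$, together with $x^\h(t)=Rx^\h(-t)$, gives $x_\mu(t)=Rx_\mu(-t)$; in particular $x_\mu(0)\in\fix(R)$, so the orbit is symmetric. Finally $\bar{z}(0)=0$ yields $x_0=x^\h$, so $\mu=0\in I$ and the family reduces to the original homoclinic orbit there, which proves Theorem~\ref{thm:ma1}.

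I do not expect a genuine obstacle: the statement is essentially a read-off of the construction preceding it, and the only points that need attention are (a) checking that the specialization $n_0=1$ annihilates the projection $\Pi$ and the orthogonality conditions \eqref{eqn:lem2b}, so that the reduced equation $(\id-\Pi)F=0$ coincides with the full equation $F=0$; and (b) observing that the two structural constraints encoded in $\hat{\Z}^1$, namely exponential decay as $t\to\pm\infty$ and $z(t)=Rz(-t)$, are precisely what make $\bar{z}(\mu)$ produce a \emph{homoclinic} and \emph{symmetric} orbit at once. It is also worth recording that the implicit function theorem delivers $I$ as an open neighborhood of $\mu=0$, which is exactly the persistence asserted.
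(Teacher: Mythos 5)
Your proposal is correct and follows essentially the paper's own route: with $n_0=1$ the projection $\Pi$ and the solvability conditions \eqref{eqn:lem2b} are vacuous, so Lemma~\ref{lem:4a} makes $\D_zF(0;0)$ an isomorphism of $\hat{\Z}_0^1$ onto $\hat{\Z}^0$, and the implicit function theorem yields $\bar{z}(\mu)$ solving $F(\bar{z}(\mu);\mu)=0$ on an interval around $\mu=0$, exactly the argument the paper invokes via \eqref{eqn:F1a}. Your added verification that membership in $\hat{\Z}^1$ (exponential decay plus $z(t)=Rz(-t)$) makes $x^\h+\bar{z}(\mu)$ a symmetric homoclinic orbit is just an explicit spelling-out of what the paper leaves implicit.
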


\begin{rmk}
\label{rmk:ma1}
Theorem~$\ref{thm:ma1}$ implies that
 if condition~{\rm(C)} does not hold,
 then the homoclinic orbit $x^\h(t)$ persists, i.e., no bifurcation occurs,
 as stated in Section~$1$.  
\end{rmk}

We now assume that $n_0=2$, which means that condition~(C) holds
 and no further linearly independent solution to the VE \eqref{eqn:ve} exists.
Define two constants $a_2,b_2$ as
\begin{equation}
\begin{split}
&
a_2=\int_{-\infty}^{\infty}
 \langle\psi_{n+2}(t),\D_\mu f(x^\h(t);0)\rangle\,\d t,\\
&
b_2=\frac{1}{2}\int_{-\infty}^{\infty}
 \langle\psi_{n+2}(t),\D_x^2 f(x^\h(t);0)(\varphi_2(t),\varphi_2(t))\rangle\,\d t
\end{split}
\label{eqn:ab}
\end{equation}
(cf. Eq.~(19) of Part~I).
We obtain the following result as in Theorem~2.7 of Part~I.
 
\begin{figure}[t]
\begin{center}
\includegraphics[scale=0.7]{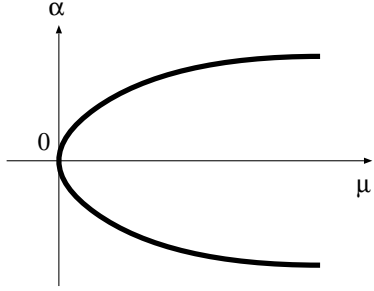}
\caption{Saddle-node bifurcation: Supercritical case is plotted.}
\label{fig:ma2}
\end{center}
\end{figure}

\begin{thm}
\label{thm:ma2}
Under assumptions~{\rm(R1)-(R5)} with $n_0=2$,
 suppose that $a_2,b_2\neq 0$.
Then for some open interval $I$ including $\mu=0$
 there exists a differentiable function $\phi:I\rightarrow\Rset$
 with $\phi(0)=0$, $\phi'(0)=0$ and $\phi''(0)\neq 0$,
 such that a symmetric homoclinic orbit of the form \eqref{eqn:hoa} exists for $\mu=\phi(\alpha)$,
 i.e., a saddle-node bifurcation of symmetric homoclinic orbits occurs at $\mu=0$.
Moreover, it is supercritical and subcritical if $a_2b_2<0$ and $>0$, respectively.
See Fig.~$\ref{fig:ma2}$.
\end{thm}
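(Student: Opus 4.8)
The plan is to deduce Theorem~\ref{thm:ma2} from Theorem~\ref{thm:ma} by a Lyapunov--Schmidt reduction exactly as in Theorem~2.7 of Part~I, the only novelty being bookkeeping forced by the reversible structure and the choice $n_0=2$. With $n_0=2$ we have a single amplitude $\alpha=\alpha_1\in\Rset$ and a single bifurcation function $\bar{F}_1(\alpha,\mu)$ defined by \eqref{eqn:bF1}, and the surviving symmetric homoclinic orbits correspond to the zero set of $\bar{F}_1$. The first step is to establish that $\bar{F}_1(0,0)=0$ (so that Theorem~\ref{thm:ma} applies and $\bar{z}(0,0)=0$) and, crucially, that $\partial\bar{F}_1/\partial\alpha(0,0)=0$. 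This vanishing is the reversible analogue of the Hamiltonian-like degeneracy that makes these bifurcations codimension-one rather than codimension-two: I would prove it by differentiating \eqref{eqn:F} in $\alpha$ along $\alpha=0$, noting that $\D_\alpha F(\bar z(\alpha,\mu);\alpha,\mu)|_{0}$ produces a term $\dot\varphi_2-\D_xf(x^\h;0)\varphi_2=0$ plus a contribution from $\D_\alpha\bar z$, and then pairing against $\psi_{n+2}$; the orthogonality relation \eqref{eqn:lem2b} together with $(\id-\Pi)F=0$ kills the $\D_\alpha\bar z$ piece, and the symmetry $\varphi_2(t)=R\varphi_2(-t)$, $\psi_{n+2}(t)=-R^\ast\psi_{n+2}(-t)$ (from \eqref{eqn:psi2}) forces the remaining odd integrand to integrate to zero over $\Rset$.

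Next I would compute the two lowest nonvanishing coefficients in the Taylor expansion
\[
\bar{F}_1(\alpha,\mu)=a_2\mu+b_2\alpha^2+\O(|\alpha|^3+|\mu|^2+|\alpha\mu|),
\]
identifying $a_2=\partial\bar F_1/\partial\mu(0,0)$ and $b_2=\tfrac12\partial^2\bar F_1/\partial\alpha^2(0,0)$ with the explicit integrals \eqref{eqn:ab}. For $a_2$ one differentiates \eqref{eqn:F} in $\mu$, uses $\bar z(0,0)=0$ and $\D_\mu\bar z$ again annihilated after pairing with $\psi_{n+2}$, leaving $-\langle\psi_{n+2},\D_\mu f(x^\h;0)\rangle$ integrated against $t$, up to sign conventions in \eqref{eqn:F}. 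For $b_2$ one takes the second $\alpha$-derivative; the term $\D_x^2f(x^\h;0)(\varphi_2,\varphi_2)$ appears from expanding $f$, while the first-order correction $\D_\alpha\bar z$ contributes a term that I would show either vanishes by the same orthogonality/symmetry argument or is already incorporated — this is the one place requiring care, since in the Part~I computation the analogous $\bar z$-term genuinely cancels against a boundary contribution and I would need to check that the reversible projection $\Pi$ (built from the \emph{even} weight $q$ with $q(t)=q(-t)$, \eqref{eqn:q}) preserves this cancellation.

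With the expansion in hand the conclusion is elementary: assuming $a_2,b_2\neq0$, the equation $\bar F_1(\alpha,\mu)=0$ near the origin defines, by the implicit function theorem applied after solving for $\mu$, a differentiable branch $\mu=\phi(\alpha)$ with $\phi(\alpha)=-(b_2/a_2)\alpha^2+\O(\alpha^3)$, hence $\phi(0)=0$, $\phi'(0)=0$, $\phi''(0)=-2b_2/a_2\neq0$; the sign of $\phi''(0)$, equivalently the sign of $a_2b_2$, distinguishes the supercritical ($\phi''(0)>0$, i.e. $a_2b_2<0$) from the subcritical case, giving Fig.~\ref{fig:ma2}. Feeding each such $(\alpha,\phi(\alpha))$ back into Theorem~\ref{thm:ma} (via $\bar z(\alpha,\phi(\alpha))$) produces the asserted symmetric homoclinic orbit of the form \eqref{eqn:hoa}. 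The main obstacle I anticipate is not the final saddle-node normal-form argument, which is routine, but verifying cleanly that $\partial\bar F_1/\partial\alpha(0,0)=0$ and that the $\D_\alpha\bar z$ contribution to $b_2$ behaves as in Part~I: both hinge on matching the reversible symmetry of $\varphi_2$, $\psi_{n+2}$ against the evenness of $q$ in \eqref{eqn:q}, and one must make sure the definition \eqref{eqn:F} of $F$ and the $\hat{\Z}^1$/$\hat{\Z}^0$ parity conventions are consistent throughout so that no stray sign invalidates the cancellation.
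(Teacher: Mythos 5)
Your overall route is the paper's: reduce via Theorem~\ref{thm:ma}, expand the single bifurcation function $\bar F_1$ of \eqref{eqn:bF1} in $(\alpha,\mu)$, identify the coefficients with $a_2,b_2$ of \eqref{eqn:ab} (up to the overall sign coming from \eqref{eqn:F}), and solve $\bar F_1=0$ for $\mu=\phi(\alpha)$ by the implicit function theorem, obtaining $\phi''(0)=-2b_2/a_2$ and the super/subcritical dichotomy from the sign of $a_2b_2$; this is exactly ``as in Theorem~2.7 of Part~I,'' which is all the paper itself says, and the mechanics are the ones displayed in the proof of Theorem~\ref{thm:ma3}.

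The one step you leave unresolved is, however, a genuine gap as you have sketched it: the contribution of $\D_\alpha\bar z$ to the $\alpha^2$-coefficient, i.e.\ the cross term $\int\langle\psi_{n+2}(t),\D_x^2f(x^\h(t);0)(\D_\alpha\bar z(0,0)(t),\varphi_2(t))\rangle\,\d t$. Neither of the mechanisms you offer disposes of it. The orthogonality identity \eqref{eqn:lem2c} only annihilates expressions of the form $\dot z-\D_x f(x^\h(t);0)z$ with $z\in\hat{\Z}^1$, which this term is not; and the reversibility parity does not help either, because with $\varphi_2(t)=R\varphi_2(-t)$, $\D_\alpha\bar z$ symmetric, $\psi_{n+2}(t)=-R^\ast\psi_{n+2}(-t)$ and $\D_x^2f(Rx;0)(Ru,Rv)=-R\,\D_x^2f(x;0)(u,v)$, the integrand comes out \emph{even} in $t$, not odd (indeed, if it were odd by parity, the same computation would force $b_2=0$, contradicting the hypothesis $b_2\neq0$). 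The correct argument, which the paper uses in the proof of Theorem~\ref{thm:ma3} (and Part~I uses in Theorem~2.7), is that $\D_\alpha\bar z(0,0)=0$: differentiating \eqref{eqn:F1a} in $\alpha$ at $(0,0)$ shows that $\D_\alpha\bar z(0,0)(t)$ solves the homogeneous VE \eqref{eqn:ve} and lies in $\hat{\Z}_0^1$, hence vanishes by the uniqueness statement of Lemma~\ref{lem:4a}; this kills the cross term and yields $\bar F_1(\alpha,\mu)=-a_2\mu-b_2\alpha^2+\mbox{h.o.t.}$, after which your implicit-function-theorem conclusion is exactly the paper's. (For the same reason your parity argument for $\partial_\alpha\bar F_1(0,0)=0$ is unnecessary: that derivative vanishes because $\varphi_2$ solves \eqref{eqn:ve} and the $\D_\alpha\bar z$ piece is handled by \eqref{eqn:lem2c} or by $\D_\alpha\bar z(0,0)=0$, not by oddness of an integrand; also note \eqref{eqn:lem2b} is the solvability condition, while the identity you actually need there is \eqref{eqn:lem2c}.)
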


We next assume the following instead of (R4).
\begin{enumerate}
\setlength{\leftskip}{-0.2em}
\item[\bf(R4')]
The equilibrium $x=0$ has a symmetric homoclinic orbit $x^\h(t;\mu)$
 in an open interval $I\ni \mu=0$.
Moreover, $\langle\psi_{n+2}(t),\dot{x}^\h(t;\mu)\rangle=0$ for any $t\in\Rset$ and $\mu\in I$.
\end{enumerate}
Under assumption~(R4') we have
\begin{align*}
\D_\mu \langle\psi_{n+2}(t),\dot{x}^\h(t;\mu)\rangle\bigg|_{\mu=0}
=&\langle\psi_{n+2}(t),\D_\mu\dot{x}^\h(t;0)\rangle
=\langle\psi_{n+2}(t),\D_\mu f(x^\h(t;0);0)\rangle,
\end{align*}
so that
\begin{equation}
a_2=\int_{-\infty}^{\infty}
 \langle\psi_{n+2}(t),\D_\mu f(x^\h(t;0);0)\rangle\,\d t=0.
\label{eqn:a20}
\end{equation}
In this situation we cannot apply Theorem~\ref{thm:ma2}.
Let $\xi=\xi^\mu(t)$ be {\color{black}the} unique solution to
\begin{equation}
\dot{\xi}=\D_x f(x^\h(t);0)\xi
 +(\id-\Pi)\D_\mu f(x^\h(t);0)
\label{eqn:ximu1}
\end{equation}
in $\hat{\Z}_0^1$, and define
\begin{equation}
\bar{a}_2=
\int_{-\infty}^{\infty}
 \langle\psi_{n+2}(t),
 \D_\mu\D_x f(x^\h(t);0)\varphi_2(t)
 +\D_x^2 f(x^\h(t);0)(\xi^\mu(t),\varphi_2(t))\rangle\,\d t,
\label{eqn:bara}
\end{equation}
where $x^\h(t)=x^\h(t;0)$ (cf. Eq.~(20) of Part~I).

\begin{thm}
\label{thm:ma3}
Under assumptions~{\rm(R1)}-{\rm(R3)}, {\rm(R4')} and {\rm(R5)} with $n_0=2$,
 suppose that $\bar{a}_2,b_2\neq 0$.
Then for some open interval $I$ including $\mu=0$
 there exists a differentiable function $\phi:I\rightarrow\Rset$
 with $\phi(0)=0$ and $\phi'(0)\neq 0$,
 such that a different symmetric homoclinic orbit of the form \eqref{eqn:hoa}
 than $x^\h(t;\mu)$ exists for $\alpha=\phi(\mu)$ {\color{black}with $\mu\neq 0$},
 i.e., a transcritical bifurcation of symmetric homoclinic orbits occurs at $\mu=0$.
See Fig.~$\ref{fig:ma3}$.
\end{thm}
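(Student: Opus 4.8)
The plan is to run the Lyapunov--Schmidt reduction prepared in Section~3.1 and analyze the resulting scalar bifurcation equation. With $n_0=2$, Theorem~\ref{thm:ma} together with \eqref{eqn:F1a} reduces the existence of a symmetric homoclinic orbit of the form \eqref{eqn:hoa} at a parameter value $\mu$ to solving the single equation $\bar{F}_1(\alpha,\mu)=0$, where $\bar{F}_1$ is given by \eqref{eqn:bF1} with $j=1$; so the whole argument is a local study of the zero set of $\bar{F}_1$ near $(\alpha,\mu)=(0,0)$, now in the degenerate situation $a_2=0$ coming from \eqref{eqn:a20}.

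First I would identify the branch carrying the \emph{known} family $x^\h(t;\mu)$. Setting $w(t;\mu)=x^\h(t;\mu)-x^\h(t;0)$, which is symmetric and exponentially decaying, one has $F(w(\cdot;\mu);0,\mu)=0$ identically, hence $(\id-\Pi)F(w(\cdot;\mu);0,\mu)=0$. Using condition~(R4') --- concretely $\langle\psi_{n+2}(t),\dot{x}^\h(t;\mu)\rangle\equiv 0$, which forces $\langle\psi_{n+2}(t),\dot{w}(t;\mu)\rangle\equiv 0$ and, together with the consequence $\langle\psi_2(0),w(0;\mu)\rangle=0$ (so that $x^\h(t;\mu)$ needs no $\alpha\varphi_2(t)$ correction), places $w(\cdot;\mu)$ in $\hat{\Z}_0^1$ --- one concludes $w(\cdot;\mu)=\bar{z}(0,\mu)$ by the uniqueness part of the reduction, so that $\bar{F}_1(0,\mu)\equiv 0$ on $I$. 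Hence $\bar{F}_1$ factors as
\[
\bar{F}_1(\alpha,\mu)=\alpha\,G(\alpha,\mu)
\]
for a differentiable function $G$, and finding nontrivial symmetric homoclinic orbits reduces to solving $G(\alpha,\mu)=0$. (If (R4') only yields that $x^\h(t;\mu)$ corresponds to $\alpha=\alpha_0(\mu)$ with $\alpha_0(0)=0$, replace $\alpha$ by $\alpha-\alpha_0(\mu)$ first; the Taylor coefficients used below are unaffected.)

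Next I would compute the low-order Taylor coefficients of $\bar{F}_1$ at $(0,0)$, as in the proof of Theorem~\ref{thm:ma2} but carrying one more $\mu$-derivative. Differentiating \eqref{eqn:bF1}, using $\dot{\varphi}_2=\D_x f(x^\h;0)\varphi_2$ and the orthogonality \eqref{eqn:lem2c} (with $j=2$), gives $\partial_\alpha\bar{F}_1(0,0)=0$, i.e.\ $G(0,0)=0$; moreover $\partial_\alpha\bar{z}(0,0)=0$, since the linearized reduced equation has only the trivial solution in $\hat{\Z}_0^1$. Matching the $\alpha^2$-term then yields $\partial_\alpha^2\bar{F}_1(0,0)=-2b_2$ with $b_2$ as in \eqref{eqn:ab}, so $\partial_\alpha G(0,0)=-b_2\neq 0$. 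For the mixed term one uses $\partial_\mu\bar{z}(0,0)=\xi^\mu$, the unique solution of \eqref{eqn:ximu1} in $\hat{\Z}_0^1$: in the second-order expansion of $F(\bar{z}(\alpha,\mu);\alpha,\mu)$ the contributions through $\partial_\mu\partial_\alpha\bar{z}(0,0)$ drop out by \eqref{eqn:lem2c}, and what survives is exactly $-\bar{a}_2$ with $\bar{a}_2$ as in \eqref{eqn:bara}; thus $\partial_\alpha\partial_\mu\bar{F}_1(0,0)=-\bar{a}_2$ and $\partial_\mu G(0,0)=-\bar{a}_2$. Hence $G(\alpha,\mu)=-b_2\alpha-\bar{a}_2\mu+\O(|\alpha|^2+|\mu|^2)$.

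Finally, since $G(0,0)=0$ and $\partial_\alpha G(0,0)=-b_2\neq 0$, the implicit function theorem produces a differentiable $\phi:I\to\Rset$ with $\phi(0)=0$ and $\phi'(0)=-\bar{a}_2/b_2\neq 0$ such that $G(\phi(\mu),\mu)=0$, i.e.\ $\bar{F}_1(\phi(\mu),\mu)=0$, for $\mu\in I$. Because $\phi'(0)\neq 0$ we have $\phi(\mu)\neq 0$ for $0<|\mu|$ small, so by Theorem~\ref{thm:ma} the corresponding symmetric homoclinic orbit \eqref{eqn:hoa} is distinct from $x^\h(t;\mu)$ (which sits on the branch $\alpha=0$), and the two branches cross transversally at $(\alpha,\mu)=(0,0)$: this is the asserted transcritical bifurcation. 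The main obstacle is the first step --- establishing that (R4'), and in particular the identity $\langle\psi_{n+2}(t),\dot{x}^\h(t;\mu)\rangle\equiv 0$, pins $x^\h(t;\mu)$ to the branch $\{\alpha=0\}$ (equivalently $\bar{F}_1(0,\mu)\equiv 0$), not merely $\bar{F}_1(0,0)=0$ together with $\partial_\mu\bar{F}_1(0,0)=0$; the second-order computation giving $\bar{a}_2$ as the mixed derivative is routine but requires care with the projection $\Pi$ and the terms containing $\xi^\mu$.
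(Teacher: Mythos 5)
Your proposal follows essentially the same route as the paper's proof: the reduction to the scalar function $\bar{F}_1$ of \eqref{eqn:bF1}, the facts $\D_\alpha\bar{z}(0,0)=0$ (via Lemma~\ref{lem:4a}) and $\D_\mu\bar{z}(0,0)=\xi^\mu$, the use of \eqref{eqn:lem2c} and \eqref{eqn:a20}, the expansion $\bar{F}_1(\alpha,\mu)=-\bar{a}_2\alpha\mu-b_2\alpha^2+\mbox{h.o.t.}$, and finally the implicit function theorem together with Theorem~\ref{thm:ma}. The differences are mainly bookkeeping: you factor $\bar{F}_1=\alpha G$ and solve $G=0$ for $\alpha=\phi(\mu)$ using $b_2\neq 0$ (so $\phi'(0)=-\bar{a}_2/b_2$), whereas the paper solves for $\mu$ as a function of $\alpha$ using $\bar{a}_2\neq 0$; under the hypothesis $\bar{a}_2,b_2\neq 0$ either orientation gives the transversal crossing of the two branches. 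Your explicit step showing that the persistent family $x^\h(t;\mu)$ sits on the branch $\alpha=0$, so that $\bar{F}_1(0,\mu)\equiv 0$ and the factorization is legitimate, is precisely what the paper leaves implicit (its remainder $\O(\sqrt{\alpha^6+|\mu|^4})$ still allows a $\mu^2$ term, and its phrase ``$\D_\mu\bar{F}_1(0,0)\neq 0$'' only makes sense after the trivial branch has been divided out), so you are right to single this out as the delicate point.

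One caveat: your fallback remark, that if the known family corresponds to $\alpha=\alpha_0(\mu)$ one may simply replace $\alpha$ by $\alpha-\alpha_0(\mu)$ ``with the Taylor coefficients unaffected,'' is not correct as stated. Under that substitution the $\alpha^2$-coefficient stays $-b_2$, but the mixed coefficient becomes $-\bigl(\bar{a}_2+2b_2\alpha_0'(0)\bigr)$, so the nondegeneracy condition of the theorem would change unless $\alpha_0'(0)=0$. Hence your argument genuinely needs the identification $\alpha_0(\mu)\equiv 0$, i.e. $\langle\psi_2(0),x^\h(0;\mu)-x^\h(0;0)\rangle=0$, which is the intended content of the second condition in (R4') (in the example of Section~4 one even has $x^\h(t;\mu)=x^\h(t)$ for all $\mu$), rather than something that can be normalized away afterwards. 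With that reading of (R4') your proof is complete and coincides in substance with the paper's.
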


\begin{figure}[t]
\begin{center}
\includegraphics[scale=0.7]{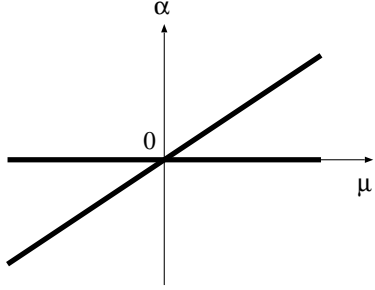}
\caption{Transcritical bifurcation.}
\label{fig:ma3}
\end{center}
\end{figure}

\begin{proof}
Differentiating \eqref{eqn:F1a} with respect to $\alpha$
 and using \eqref{eqn:F}, we have
\[
\D_\alpha(\id-\Pi)F(\bar{z};0,0)
 =\frac{\d}{\d t}\D_\alpha\bar{z}-\D_x f(x^\h(t);0)\D_\alpha\bar{z}=0
\]
at $(\alpha,\mu)=(0,0)$, i.e.,
 $\D_\alpha\bar{z}(0;0)(t)$
 is a solution of \eqref{eqn:ve},
 so that $\D_\alpha\bar{z}(0;0)(t)=0$ by Lemma~\ref{lem:4a}.  
Using this fact, \eqref{eqn:lem2c} and \eqref{eqn:a20}, we compute \eqref{eqn:bF1} as
\begin{align*}
\bar{F}_1(\alpha,\mu)
=&\int_{-\infty}^\infty\langle\psi_{n+2}(t),
 -\D_\mu\D_x f(x^\h(t);0)\varphi_2(t)\mu\\
& \qquad
 -\half\alpha^2\D_x^2 f(x^\h(t);0)(\varphi_2(t),\varphi_2(t))\rangle\d t
 +\O(\sqrt{\alpha^6+|\mu|^4})\\
=&-\bar{a}_2\alpha\mu-b_j\alpha^2+\O(\sqrt{\alpha^6+|\mu|^4}),
\end{align*}
as in the proof of Theorem~2.7 of Part~I.
Since $\bar{F}_1(0,0)=0$ and $\D_\mu\bar{F}_1(0,0)\neq 0$,
 we apply the implicit function theorem to show that
 there exist an open interval $I\,(\ni 0)$
 and a differentiable function $\bar{\phi}:I\rightarrow\Rset$
 such that $\bar{F}(\bar{\phi}(\alpha),\alpha)=0$ for $\alpha\in I$
 with $\bar{\phi}(0)=0$ and $\bar{\phi}'(0)\neq 0$.
This implies the result along with Theorem~\ref{thm:ma}.
\end{proof}

\begin{rmk}
For the class of systems discussed in Part~I, including Hamiltonian systems,
 we can prove a result similar to Theorem~$\ref{thm:ma3}$.
\end{rmk}

Finally we consider the $\Zset_2$-equivalent or equivariant case for $n_0=2$,
 and assume the following.
\begin{enumerate}
\item[\bf(R6)]
Eq.~\eqref{eqn:sys} is {\em $\Zset_2$-equivalent} or {\em equivariant}, i.e.,
 there exists an $n\times n$ matrix $S$
 such that $S^2=\id_n$ and $Sf(x;\mu)=f(Sx;\mu)$.
\end{enumerate}
See Section~2.3 of Part~I or Section~7.4 of \cite{K04}
 for more details on $\Zset_2$-equivalent or equivariant systems.
Especially, if $x=\bar{x}(t)$ is a solution to \eqref{eqn:sys}, then so is $x=S\bar{x}(t)$.
We say that the pair $\bar{x}(t)$ and $S\bar{x}(t)$ are $S$-conjugate if $\bar{x}(t)\neq S\bar{x}(t)$.
The space $\Rset^{\color{black}2n}$ can be decomposed into a direct sum as
\[
\Rset^{\color{black}2n}=X^+\oplus X^-,
\]
where $Sx=x$ for $x\in X^+$ and $Sx=-x$ for $x\in X^-$.
We also need the following assumption.
\begin{enumerate}
\item[\bf(R7)]
We have $X^-=(X^+)^\bot$.
For every $t\in\Rset$,
 $x^\h(t),\psi_{n+1}(t)\in X^+$
 and $\varphi_2(t),\psi_{n+2}(t)\in X^-$.
\end{enumerate}
In Part~I, we implicitly assumed that $X^-=(X^+)^\bot$.
Recall that the scalar product in $\Rset^{2n}$
 was already chosen such that $\fix(-R)=\fix(R)^\bot$.

Assumption~(R7) also means that $\varphi_1(t)\in X^+$.
Moreover, a symmetric homoclinic orbit of the form (\ref{eqn:hoa}) has an $S$-conjugate counterpart
 for $\alpha\neq 0$ since it is not included in $X^+$.
In  this situation, we have $a_2,b_2=0$ in Theorems~\ref{thm:ma2} and \ref{thm:ma3},
 as in Lemma~2.8 of Part~I,  and cannot apply these theorems.

Let $\xi=\xi^\alpha(t)$ be a unique solution to
\begin{equation}
\dot{\xi}=\D_x f(x^\h(t);0)\xi
 +\frac{1}{2}(\id-\Pi)\D_x^2 f(x^\h(t);0)(\varphi_2(t),\varphi_2(t))
\label{eqn:xia}
\end{equation}
in $\hat{\Z}_0^1$, and define
\begin{align}
\bar{b}_2 =& \int_{-\infty}^{\infty}
 \Bigl\langle\psi_{n+2}(t),
 \frac{1}{6}\D_x^3 f(x^\h(t);0)(\varphi_2(t),\varphi_2(t),\varphi_2(t))\notag\\
&\qquad
 +\D_x^2 f(x^\h(t);0)(\xi^\alpha(t),\varphi_2(t))\Bigr\rangle\,\d t
\label{eqn:barb}
\end{align}
(cf. Eq.~(20) of Part~I).
We obtain the following result as in Theorem~2.9 of Part~I.

\begin{figure}[t]
\begin{center}
\includegraphics[scale=0.7]{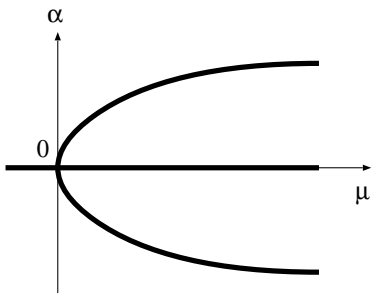}
\caption{Pitchfork bifurcation: Supercritical case is plotted.}
\label{fig:ma4}
\end{center}
\end{figure}

\begin{thm}
\label{thm:ma4}
Under assumptions {\rm(R1)-(R7)} with $n_0=2$,
 suppose that $\bar{a}_2,\bar{b}_2\neq 0$.
Then for $j=1,2$ 
 there exist an open interval $I_j\ni 0$ and a differentiable function
 $\phi_j:I_j\rightarrow\Rset$ with $\phi_j(0)=0$,
 $\phi_2'(0)=0$, $\phi_2''(0)\neq 0$ and $\phi_2(\alpha)=\phi_2(-\alpha)$ for $\alpha\in I_2$,
 such that a symmetric homoclinic orbit exists on $X^+$ for $\mu=\phi_1(\mu_2)$
 and an $S$-conjugate pair of symmetric homoclinic orbits exist for $\mu=\phi_2(\alpha)$:
 a pitchfork bifurcation of homoclinic orbits occurs.
Moreover, it is supercritical and subcritical
 if $\bar{a}_2\bar{b}_2<0$ and $>0$, respectively.
See Fig.~$\ref{fig:ma4}$.
\end{thm}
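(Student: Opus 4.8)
The plan is to carry out the Lyapunov--Schmidt reduction exactly as in the proofs of Theorems~\ref{thm:ma2} and \ref{thm:ma3}, analysing the scalar bifurcation function $\bar F_1(\alpha,\mu)$ of \eqref{eqn:bF1}, and then to use the $\Zset_2$-equivariance of (R6)--(R7) to show that $\bar F_1$ is \emph{odd} in $\alpha$, so that its zero set splits into the trivial branch $\alpha=0$ and a bifurcating branch $\mu=\phi_2(\alpha)$ with $\phi_2$ even. First I would record that under (R7) the coefficients $a_2$ and $b_2$ of \eqref{eqn:ab} both vanish: differentiating $Sf(x;\mu)=f(Sx;\mu)$ and evaluating at $x=x^\h(t)\in X^+$ shows that $\D_x f(x^\h(t);0)$ preserves $X^\pm$ and that $\D_\mu f(x^\h(t);0)$ and $\D_x^2 f(x^\h(t);0)(\varphi_2(t),\varphi_2(t))$ lie in $X^+$, which is orthogonal to $\psi_{n+2}(t)\in X^-$ by (R7). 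The same computation, together with the fact that the projection $\Pi$ annihilates $X^+$ (it is built from $\psi_{n+2},\varphi_{n+2}\in X^-$), shows that the inhomogeneous terms of \eqref{eqn:ximu1} and \eqref{eqn:xia} lie in $X^+$, so $\xi^\mu(t),\xi^\alpha(t)\in X^+$ and the integrands defining $\bar a_2$ in \eqref{eqn:bara} and $\bar b_2$ in \eqref{eqn:barb} are genuine pairings of $X^-$-vectors; thus $\bar a_2,\bar b_2$ are well defined and, unlike $a_2,b_2$, generically nonzero, so Theorems~\ref{thm:ma2} and \ref{thm:ma3} are inapplicable.

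Next I would prove that $\bar F_1(-\alpha,\mu)=-\bar F_1(\alpha,\mu)$. Since $x^\h(t)=Sx^\h(t)$ and $\varphi_2(t)=-S\varphi_2(t)$ by (R7), the map $F$ of \eqref{eqn:F} satisfies $F(Sz;-\alpha,\mu)(t)=SF(z;\alpha,\mu)(t)$ by equivariance, and $\Pi$ commutes with $S$; applying $S$ to \eqref{eqn:F1a} then shows that $S\bar z(\alpha,\mu)$ solves the reduced equation at parameter values $(-\alpha,\mu)$ and still lies in $\hat{\Z}_0^1$, so uniqueness in the implicit function theorem forces $\bar z(-\alpha,\mu)=S\bar z(\alpha,\mu)$. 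Substituting into \eqref{eqn:bF1} and using $\psi_{n+2}(t)\in X^-$ gives the claimed oddness; in particular $\bar F_1(0,\mu)\equiv 0$ on an interval $I_1\ni 0$, which reflects the persistence on $X^+$ of the primary symmetric homoclinic orbit $x^\h(t;\mu)$ of (R4') --- this is the $\phi_1$-branch.

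I would then Taylor-expand $\bar F_1$ near $(\alpha,\mu)=(0,0)$ as in the proof of Theorem~\ref{thm:ma3}, using $\D_\alpha\bar z(0;0)=0$, $\D_\mu\bar z(0;0)=\xi^\mu$ and $\D_\alpha^2\bar z(0;0)=2\xi^\alpha$ (the last read off from \eqref{eqn:xia}), and \eqref{eqn:lem2c} to discard the terms that pair trivially with $\psi_{n+2}$. Oddness kills every even power of $\alpha$, in particular the would-be $\alpha^2$-term, whose coefficient is $-b_2=0$, and one obtains
\begin{align*}
\bar F_1(\alpha,\mu)
&=-\bar a_2\,\alpha\mu-\bar b_2\,\alpha^3+\O\bigl(\alpha^5+\alpha^3\mu+\alpha\mu^2\bigr)\\
&=-\alpha\bigl(\bar a_2\mu+\bar b_2\alpha^2+\O(\alpha^4+\alpha^2\mu+\mu^2)\bigr).
\end{align*}
Since $\bar a_2\neq 0$, applying the implicit function theorem to the second factor as a function of $(\beta,\mu)$ with $\beta=\alpha^2$ yields a differentiable $\tilde\phi$ with $\mu=\tilde\phi(\beta)=-(\bar b_2/\bar a_2)\beta+\O(\beta^2)$; then $\phi_2(\alpha):=\tilde\phi(\alpha^2)$ satisfies $\phi_2(0)=0$, $\phi_2'(0)=0$, $\phi_2''(0)=-2\bar b_2/\bar a_2\neq 0$ and $\phi_2(\alpha)=\phi_2(-\alpha)$.

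Finally, by Theorem~\ref{thm:ma} the zeros of $\bar F_1$ with $\alpha\neq 0$ and $\mu=\phi_2(\alpha)$ correspond to genuine symmetric homoclinic orbits of the form \eqref{eqn:hoa}; since such an orbit carries the term $\alpha\varphi_2(t)$ with $\varphi_2(t)\in X^-$ it is not contained in $X^+$, so it and its $S$-image form an $S$-conjugate pair, and together with the $\alpha=0$ branch this is a pitchfork, supercritical when $\phi_2''(0)>0$, i.e.\ $\bar a_2\bar b_2<0$, and subcritical when $\bar a_2\bar b_2>0$. The step I expect to demand the most care is the equivariance bookkeeping --- verifying $\Pi S=S\Pi$, $F(Sz;-\alpha,\mu)=SF(z;\alpha,\mu)$ and $\bar z(-\alpha,\mu)=S\bar z(\alpha,\mu)$, which tacitly uses $RS=SR$ and the self-adjointness of $S$ --- together with confirming that the $\alpha^3$-coefficient of $\bar F_1$ is exactly $-\bar b_2$, i.e.\ that the $\frac16\D_x^3 f$ term, the $\D_x^2 f(\xi^\alpha,\varphi_2)$ correction from $\D_\alpha^2\bar z$, and their interaction with $\id-\Pi$ collapse into \eqref{eqn:barb}; the rest is the same implicit-function-theorem argument as in Part~I.
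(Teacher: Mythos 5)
Your proposal is correct and follows essentially the same route as the paper, which proves Theorem~\ref{thm:ma4} exactly as Theorem~2.9 of Part~I within the reduction of Section~3.1: equivariance forces $a_2,b_2=0$ and makes $\bar F_1$ odd in $\alpha$, the cubic expansion $\bar F_1=-\alpha(\bar a_2\mu+\bar b_2\alpha^2+\cdots)$ plus the implicit function theorem gives the even branch $\phi_2$, and the $\alpha\neq 0$ orbits carry $\alpha\varphi_2\in X^-$ so they form $S$-conjugate pairs. Your only slip is attributing the $X^+$-branch to (R4'), which is not among the hypotheses here; but your own observation that $\bar F_1(0,\mu)\equiv 0$ (equivalently, invariance of $X^+$ under the flow and under the reduction) already yields that branch, so nothing is lost.
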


From Theorems~{\color{black}$\ref{thm:ma2}$, $\ref{thm:ma3}$ and $\ref{thm:ma4}$}
 we see that if condition~{\rm(C)} holds,
 then a saddle-node, transcritical or pitchfork bifurcation occurs
 under some nondegenerate condition, as stated in Section~1.  

\section{Example}

We now illustrate our theory for the four-dimensional system
\begin{equation}
\begin{split}
&
\dot{x}_1=x_3,\quad
\dot{x}_2=x_4,\\
&
\dot{x}_3=x_1-(x_1^2+8x_2^2)x_1-\beta_2x_2,\\
&
\dot{x}_4=sx_2-\beta_1(x_1^2+2x_2^2)x_2-\beta_2x_1-\beta_3x_2^2,
\end{split}
\label{eqn:ex}
\end{equation}
where $s>0$ and $\beta_j$, $j=1$-$4$, are constants.
Similar systems were treated in Part~I and \cite{SY20,YW06,YY20}
 (although $s<0$ in \cite{YW06}).
Eq.~\eqref{eqn:ex} is reversible with the involution
\[
R:(x_1,x_2,x_3,x_4)\mapsto(x_1,x_2,-x_3,-x_4),
\]
for which $\fix(R)=\{(x_1,x_2,x_3,x_4)\in\Rset^4\mid x_3,x_4=0\}$,
 and has an equilibrium at the origin $x=0$.
Thus, assumptions~(R1) and (R2) hold.
The Jacobian matrix of the right hand side of \eqref{eqn:ex} at $x=0$
 has two pairs of positive and negative eigenvalues with the same absolute values
 so that the origin $x=0$ is a hyperbolic saddle.
Thus, assumption~(R3) holds.

Suppose that $\beta_2=0$.
The $(x_1,x_3)$-plane is invariant under the flow of \eqref{eqn:ex}
 and there exist a pair of symmetric homoclinic orbits
\[
x_\pm^\h(t)
 =(\pm\sqrt{2}\,\sech t,0,\mp\sqrt{2}\,\sech\,t\tanh t,0)
\]
to $x=0$.
Thus, assumption~(R4) holds as well as conditions~(B2) and (B3) by Proposition~\ref{prop:2a}.
Henceforth we only treat the homoclinic orbit $x_+^\h(t)$ for simplification
 and denote it by  $x^\h(t)$.
Note that a pair of symmetric homoclinic orbits also exist on the $(x_2,x_4)$-plane.
The VE \eqref{eqn:ve} around $x=x^\h(t)$ for \eqref{eqn:ex} is given by
{\setcounter{enumi}{\value{equation}}
\addtocounter{enumi}{1}
\setcounter{equation}{0}
\renewcommand{\theequation}{\arabic{section}.\theenumi\alph{equation}}
\begin{align}
&
\dot{\xi}_1=\xi_3,\quad
\dot{\xi}_3=(1-6\,\sech^2 t)\xi_1,\label{eqn:exve1}\\
&
\dot{\xi}_2=\xi_4,\quad
\dot{\xi}_4=(s-2\beta_1\,\sech^2 t)\xi_2.\label{eqn:exve2a}
\end{align}

\setcounter{equation}{\value{enumi}}}

As discussed in Section~5 of Part~I (see also \cite{SY20}),
 Eq.~\eqref{eqn:exve2a} has a bounded symmetric solution,
 so that assumption~(R5) holds with $n_0=2$, if and only if
\begin{equation}
\beta_1=\frac{(2\sqrt{s}+4\ell+1)^2-1}{8},\quad
\ell\in\Nset\cup\{0\},
\label{eqn:excon}
\end{equation}
while Eq.~\eqref{eqn:exve1} always has a bounded solution
 corresponding to $\xi=\dot{x}^\h(t)$.
The bounded symmetric solution $(\bar{\xi}_2(t),\bar{\xi}_4(t))$ to \eqref{eqn:exve2a} is given by
\[
\bar{\xi}_2(t)=\sech^{\sqrt{s}}t
\]
for $\ell=0$,
\[
\bar{\xi}_2(t)=\sech^{\sqrt{s}}t\left(1-\left(\sqrt{s}+\frac{3}{2}\right)\sech^2 t\right)
\]
for $\ell=1$,
\[
\bar{\xi}_2(t)=\sech^{\sqrt{s}}t\left(1-2(\sqrt{s}+5)\sech^2 t
 +\left(\sqrt{s}+\frac{5}{2}\right)\left(\sqrt{s}+\frac{7}{2}\right)\sech^4 t\right)
\]
for $\ell=2$ and $\bar{\xi}_4(t)=\dot{\bar{\xi}}_2(t)$ (see Appendix~A of Part~I).
Note that Eq.~\eqref{eqn:exve2a} has an asymmetric bounded solution
 if the first equation \eqref{eqn:excon} holds for $\ell\in\frac{1}{2}\Nset\setminus\Nset$.
Moreover, if condition~\eqref{eqn:excon} holds,
 then the differential Galois group of the VE given by \eqref{eqn:exve1} and \eqref{eqn:exve2a}
 is triangularizable.
See Fig.~7 of Part~I for the dependence of $\beta_1$ satisfying \eqref{eqn:excon} on $s$
 (the definition of $\ell$ there is different from here: $\ell$ is replaced with $2\ell$).
When condition~\eqref{eqn:excon} holds,
 we have
\[
\varphi_2(t)=(0,\bar{\xi}_2(t),0,\bar{\xi}_4(t))
\]
and
\[
\psi_4(t)=(0,-\bar{\xi}_4(t),0,\bar{\xi}_2(t)).
\]
 
Fix the values of $\beta_1$ and $\beta_3\neq 0$ such that Eq.~\eqref{eqn:excon} holds.
Take $\mu=\beta_2$ as a control parameter.
Eq.~\eqref{eqn:ab} become
\begin{align*}
a_2=-\int_{-\infty}^\infty\bar{\xi}_2(t)x_1^\h(t)\d t,\quad
b_2=-\beta_3\int_{-\infty}^\infty\bar{\xi}_2(t)^3\d t.
\end{align*}
See Appendix~A of Part~I for analytic expressions of  these integrals for $\ell=0,1,2$,
 which correspond to $\ell=0,2,4$ there.
Applying Theorem~\ref{thm:ma2},
 we see that a saddle-node bifurcation of symmetric homoclinic orbits occurs at $\beta_2=0$
 if $a_2b_2\neq 0$, which holds for almost all values of $s$ when $\beta_3\neq 0$ and $0\le\ell\le 2$.

We next assume that $\beta_2=0$.
Then assumption~(R4') holds.
Take $\mu=\beta_1$ as a control parameter.
Since $\D_\mu f(x^\h(t);0)=0$,
 the solution to \eqref{eqn:ximu1} in $\tilde{\Z}_0^1$ is $\xi^{\beta_1}(t)=0$.
Eq.~\eqref{eqn:bara} becomes
\begin{align*}
\bar{a}_2=-\int_{-\infty}^\infty\bar{\xi}_2(t)^2x_1^\h(t)^2\d t<0.
\end{align*}
Applying Theorem~\ref{thm:ma3},
 we see that a transcritical bifurcation of symmetric homoclinic orbits occurs
 at the values of $\beta_1$ given by \eqref{eqn:excon} if $b_2\neq 0$.

We next assume that $\beta_2,\beta_3=0$.
Then Eq.~\eqref{eqn:ex} is $\Zset_2$-equivariant with the involution
\[
S: (x_1,x_2,x_3,x_4)\mapsto(x_1,-x_2,x_3,-x_4)
\]
and assumptions~(R6) and (R7) hold.
In particular, $X^+=\{x_2,x_4=0\}$ and $X^-=\{x_1,x_3=0\}$.
Since
\[
\D_x^2 f(x^\h(t);0)(\varphi_2(t),\varphi_2(t))
 =(0,0,x_1^\h(t)\bar{\xi}_2(t)^2,\beta_2\bar{\xi}_2(t)^2)^\ast,
\]
 we write \eqref{eqn:barb} as
\begin{equation}
\bar{b}_2=-2\beta_1\int_{-\infty}^\infty x_1^\h(t)\xi_1^\alpha(t)\bar{\xi}_2(t)^2\d t
 -2\beta_1\int_{-\infty}^\infty\bar{\xi}_2(t)^4\d t,
\label{eqn:exb2}
\end{equation}
where $\xi_1^\alpha(t)$ is the first component of the solution to \eqref{eqn:xia} in $\tilde{\Z}_0^1$
 and given by
 \[
\xi_1^\alpha(t)=\varphi_{11}(t)\int_0^t\psi_{13}(\tau)x_1^\h(\tau)\bar{\xi}_2(\tau)^2\d\tau
-\varphi_{31}(t)\int_t^\infty\psi_{33}(\tau)x_1^\h(\tau)\bar{\xi}_2(\tau)^2\d\tau,
\]
and $\varphi_{jk}(t)$ and $\psi_{jk}(t)$ are the $k$th components of $\varphi_j(t)$ and $\psi_j(t)$,
 respectively (the corresponding formula in Part~I had a small error).
We compute \eqref{eqn:exb2} as
\[
\bar{b}_2=\frac{\sqrt\pi\,\Gamma(2\sqrt s)}{\Gamma(2\sqrt s+\frac{1}{2})}
\frac{P_\ell(\sqrt s)}{Q_\ell(\sqrt s)},
\]
where
\begin{align*}
P_0(x)=&x(x^2-x-1),\\
P_1(x)=&145x^6+530x^5+115x^4-1971x^3-3502x^2-2427x-630,\\
P_2(x)=&27(16627 x^9+ 242984 x^8+ 1310501 x^7 + 2451387 x^6-4949646 x^5 \\
& - 15422381 x^4 -76574432x^3-429952220 x^2 - 49776200 x-12012000),\\
&\hspace*{2em}\vdots
\end{align*}
and
\[
Q_0(x)=1,\quad
Q_\ell(x)=\prod_{j=1}^\ell(x+j)^3\prod_{j=1}^{4\ell}(4x+2j-1)\quad\mbox{for $\ell\ge 1$}.
\]
See Section~7 and Appendix~B of \cite{YY20}
 for derivation of these expressions.
In particular, we see that $\bar{b}_2\neq 0$ except for a finite number of values of $s>0$,
 for each $\ell\ge 0$.
Applying Theorem~\ref{thm:ma4},
 we see that a pitchfork bifurcation of symmetric homoclinic orbits occurs
 at the values of $\beta_1$ given by \eqref{eqn:excon} if $\bar{b}_2\neq 0$.

\begin{figure}[t]
\begin{center}
\includegraphics[scale=0.53]{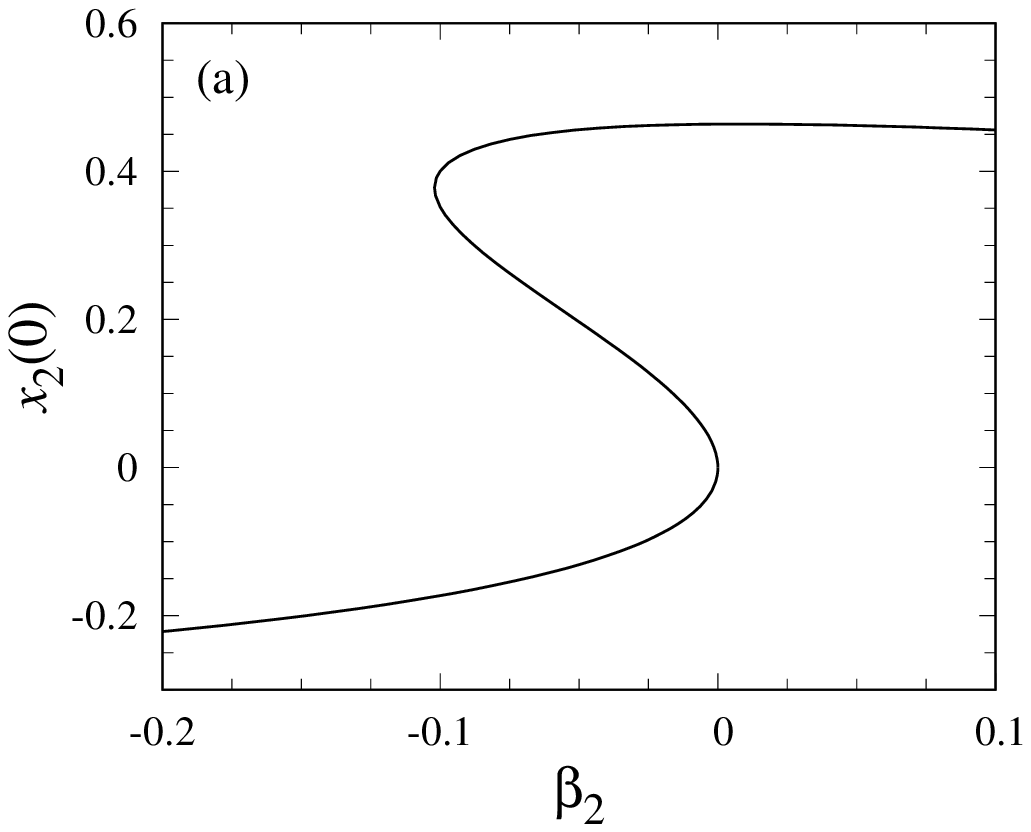}\quad
\includegraphics[scale=0.53]{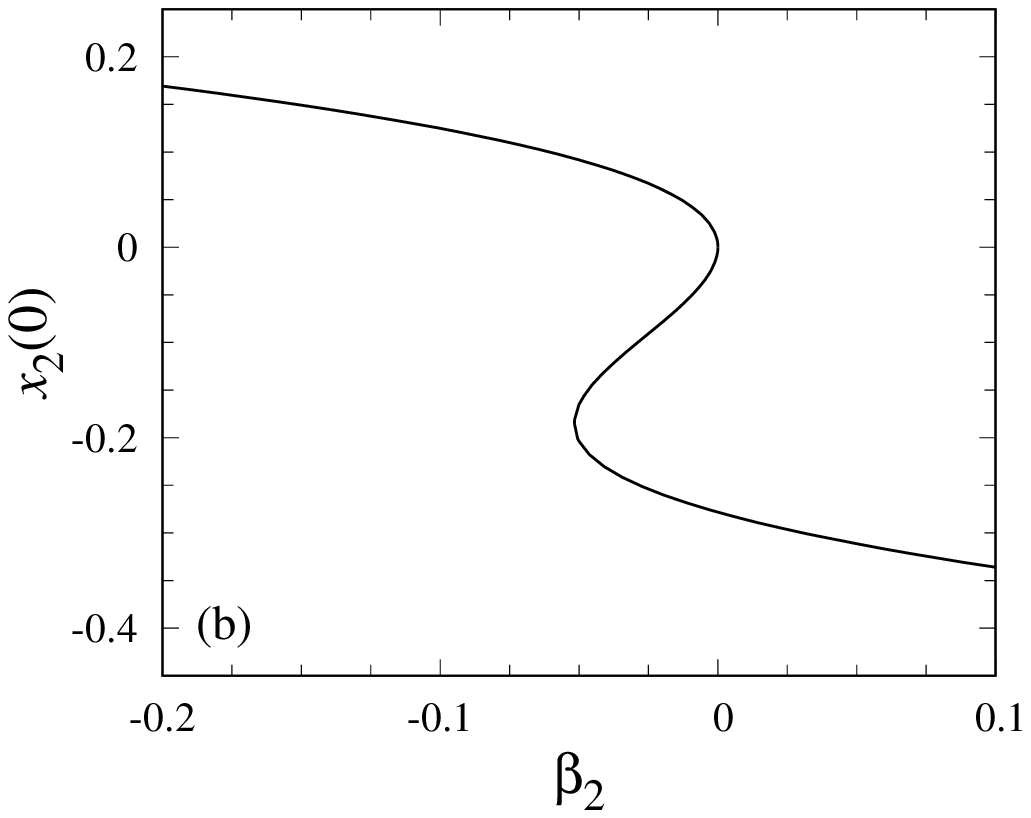}\\
\includegraphics[scale=0.53]{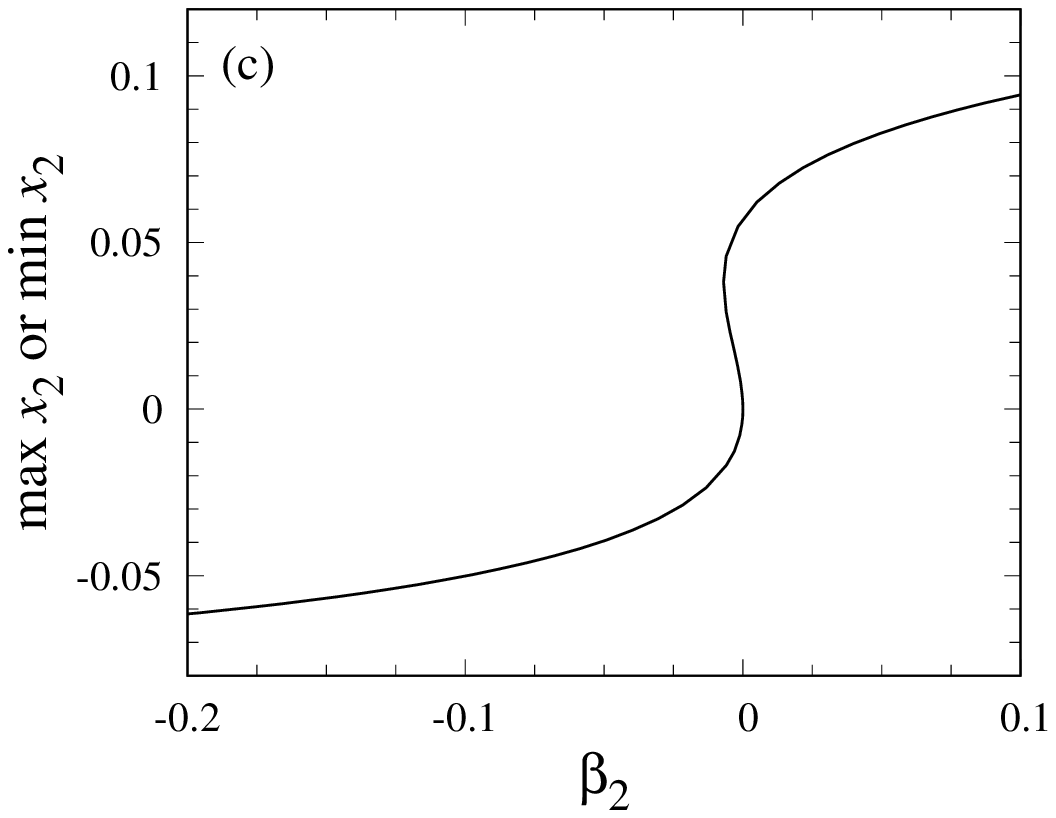}
\caption{Bifurcation diagrams for $s=2$ and $\beta_3=4$:
(a) $\ell=0$; (b) $\ell=1$; (c) $\ell=2$.
Here $\beta_2$ is taken as a control parameter.
\label{fig:4a}}
\end{center}
\end{figure}
\begin{figure}[t]
\begin{center}
\includegraphics[scale=0.48]{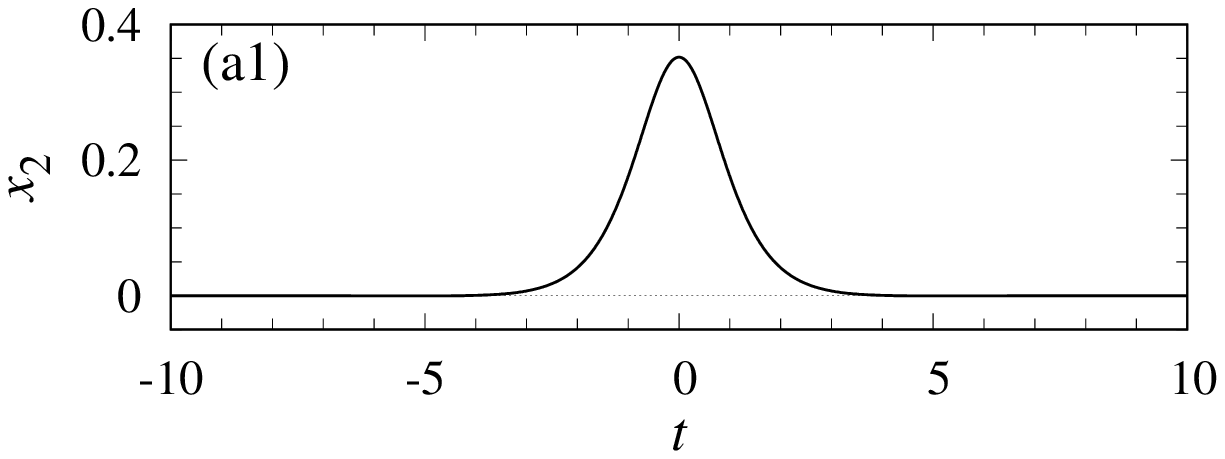}\
\includegraphics[scale=0.48]{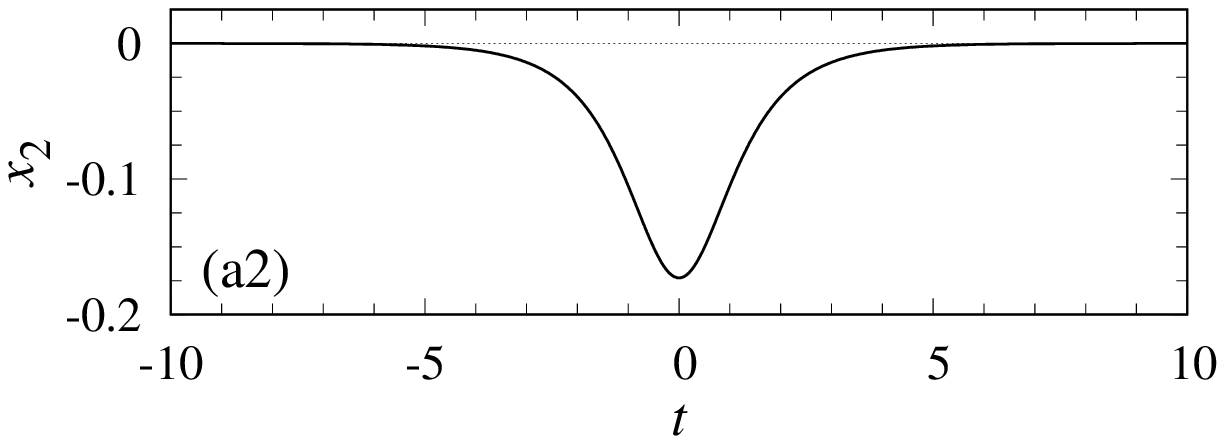}\\[1ex]
\includegraphics[scale=0.48]{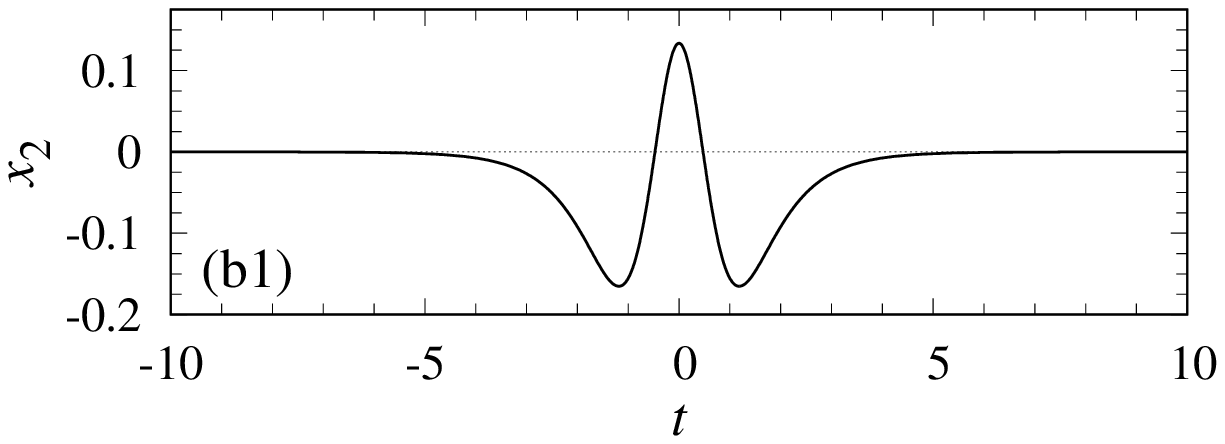}\
\includegraphics[scale=0.48]{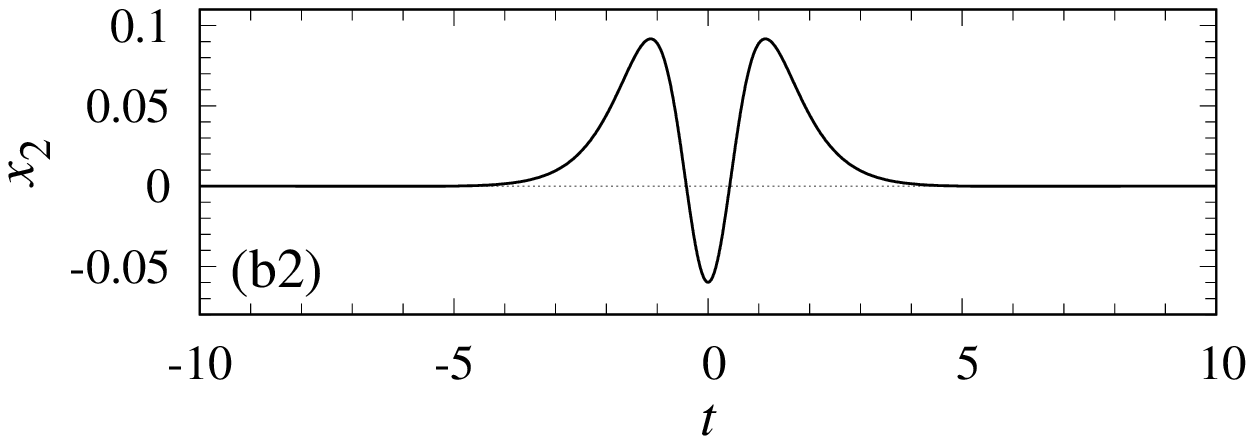}\\[1ex]
\includegraphics[scale=0.48]{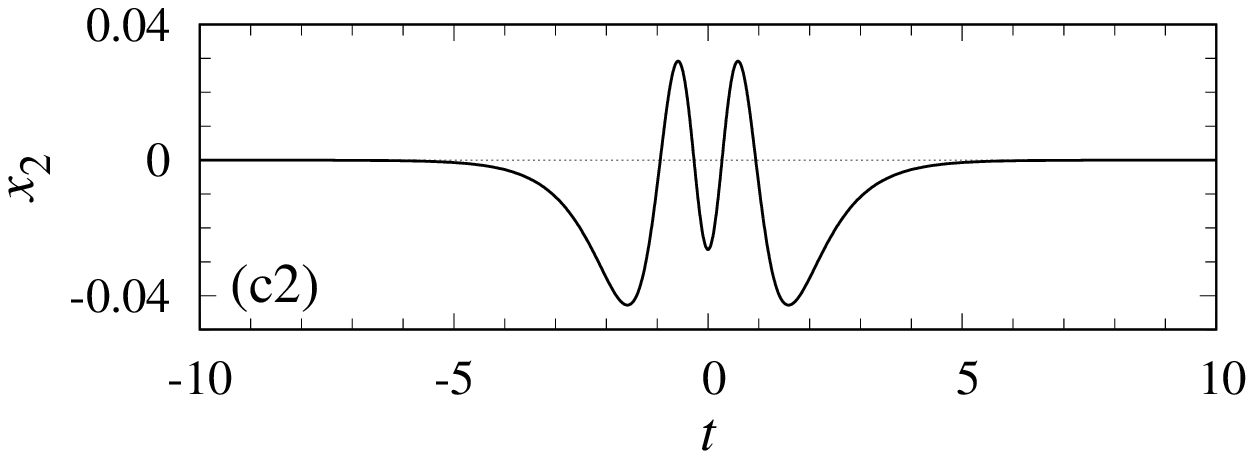}\
\includegraphics[scale=0.48]{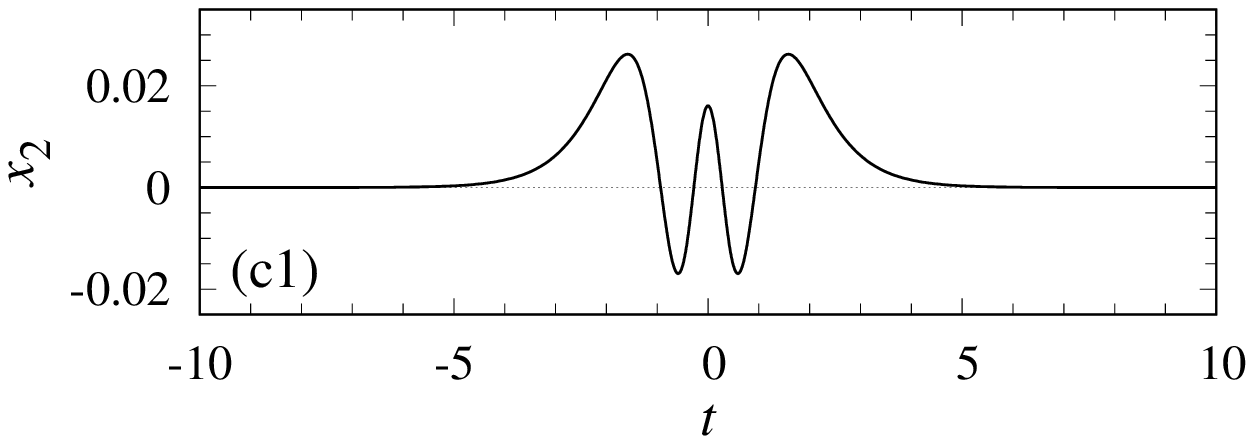}
\caption{Profiles of symmetric homoclinic orbits on the branches for $s=2$ and $\beta_3=4$:
(a1) and (a2) $\beta_2=-0.1$ and $\ell=0$; 
(b1) and (b2) $\beta_2=-0.05$ and $\ell=1$;
(c1) and (c2) $\beta_2=-0.006$ and $\ell=2$.
 \label{fig:4b}}
\end{center}
\end{figure}

\begin{figure}[t]
\begin{center}
\includegraphics[scale=0.53]{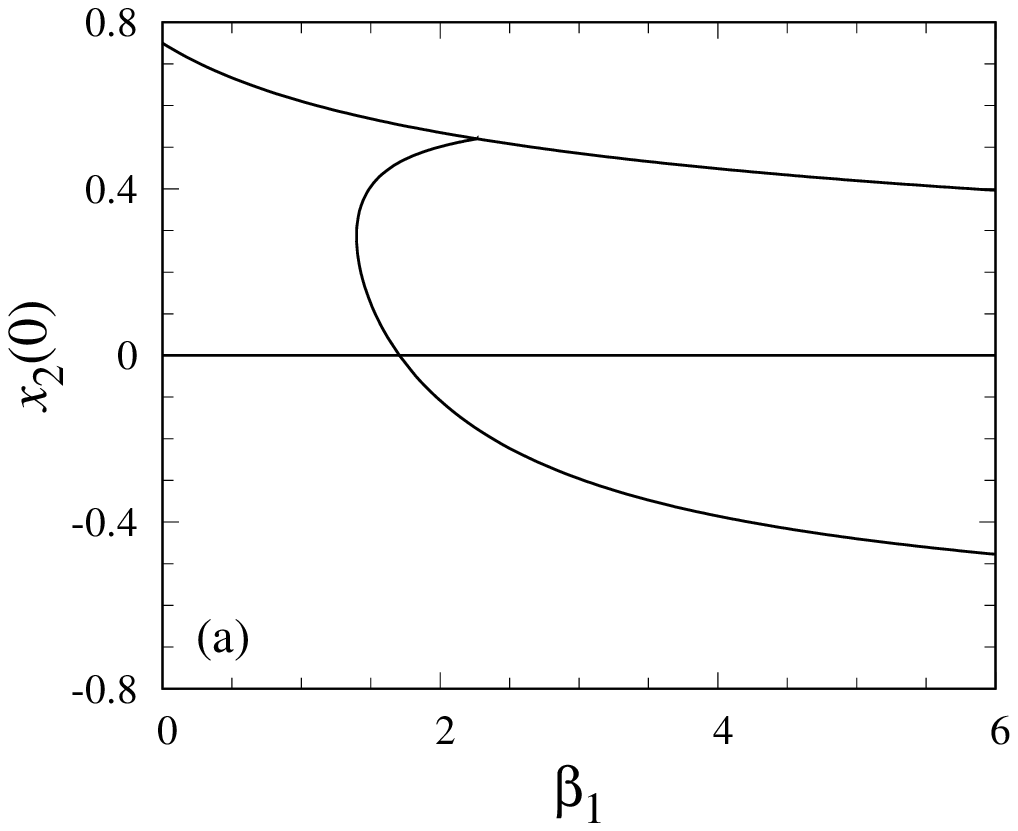}\quad
\includegraphics[scale=0.53]{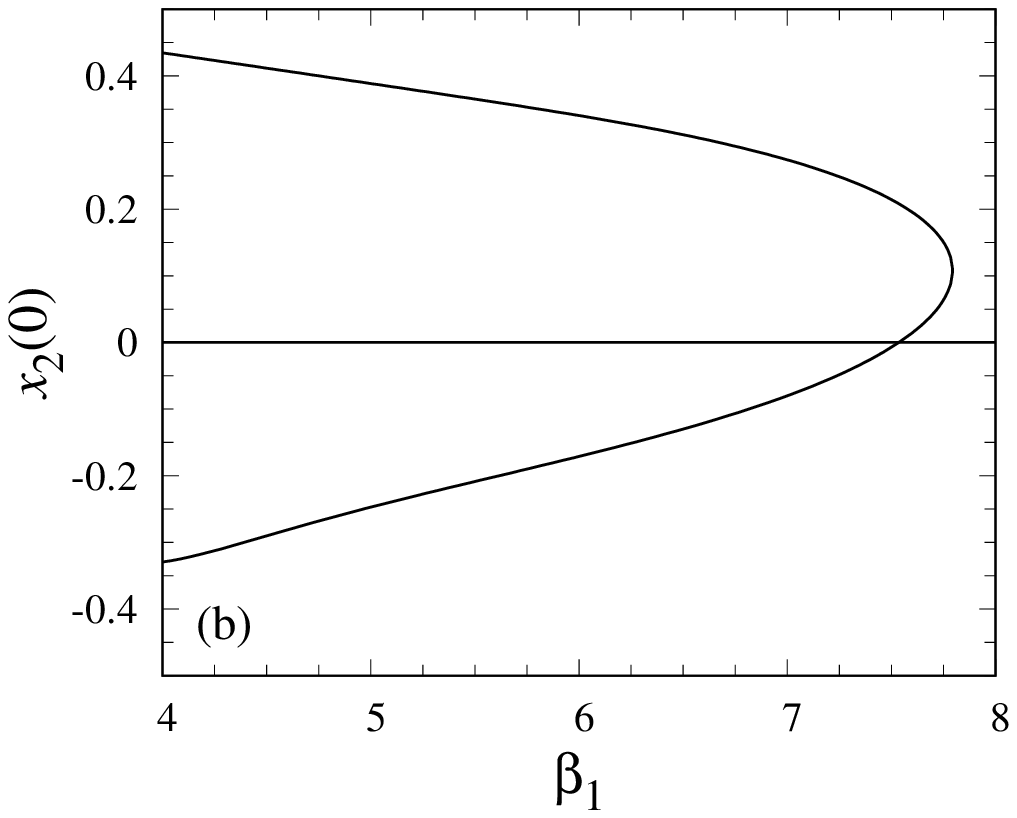}\\
\includegraphics[scale=0.53]{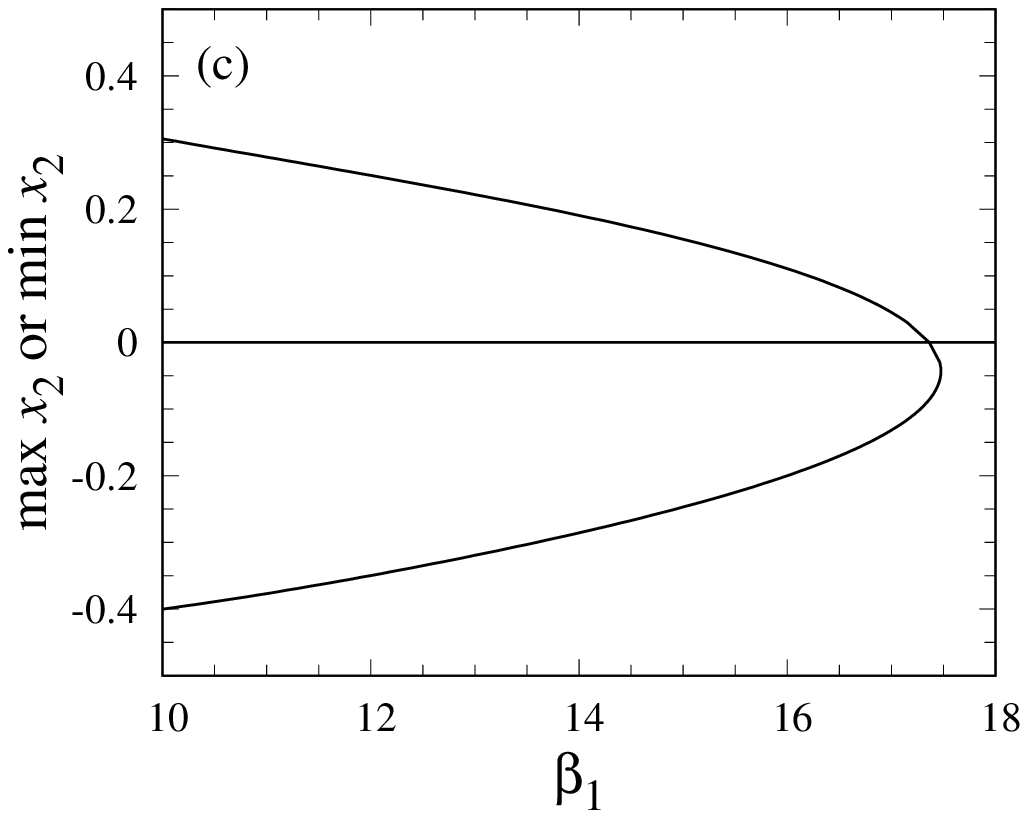}
\caption{Bifurcation diagrams for $s=2$, $\beta_2=0$ and $\beta_3=4$:
(a) $\ell=0$; (b) $\ell=1$; (c) $\ell=2$.
Here $\beta_1$ is taken as a control parameter.
\label{fig:4c}}
\end{center}
\end{figure}
\begin{figure}[t]
\begin{center}
\includegraphics[scale=0.48]{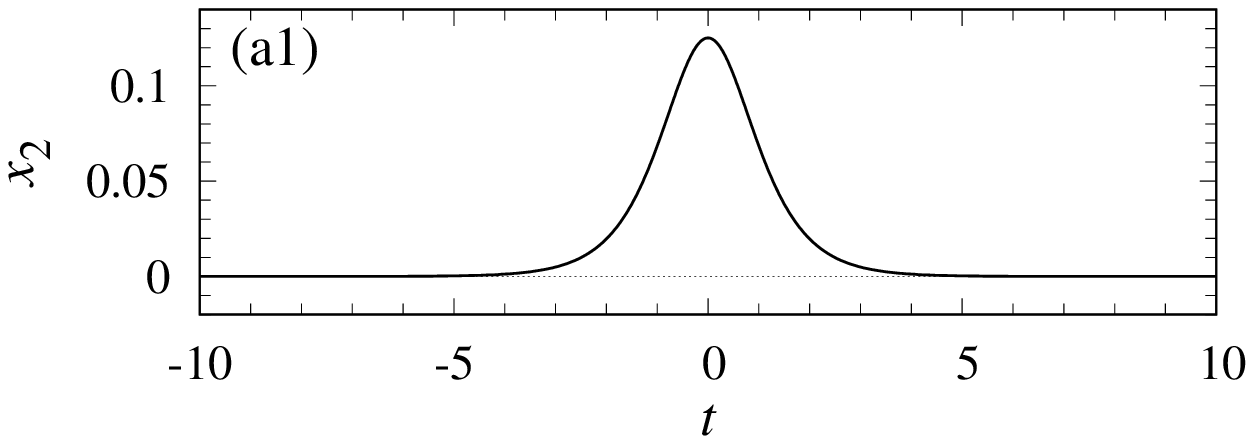}\
\includegraphics[scale=0.48]{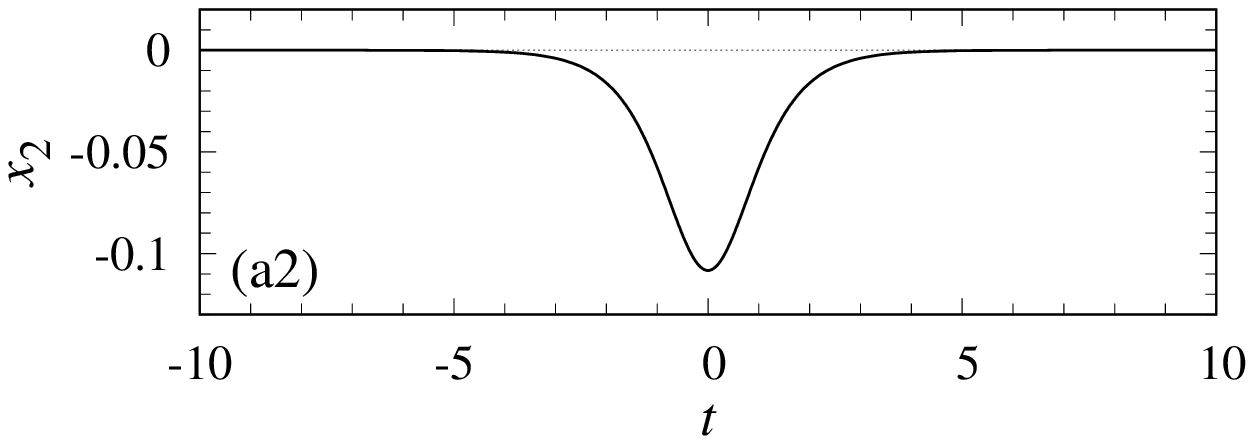}\\[1ex]
\includegraphics[scale=0.48]{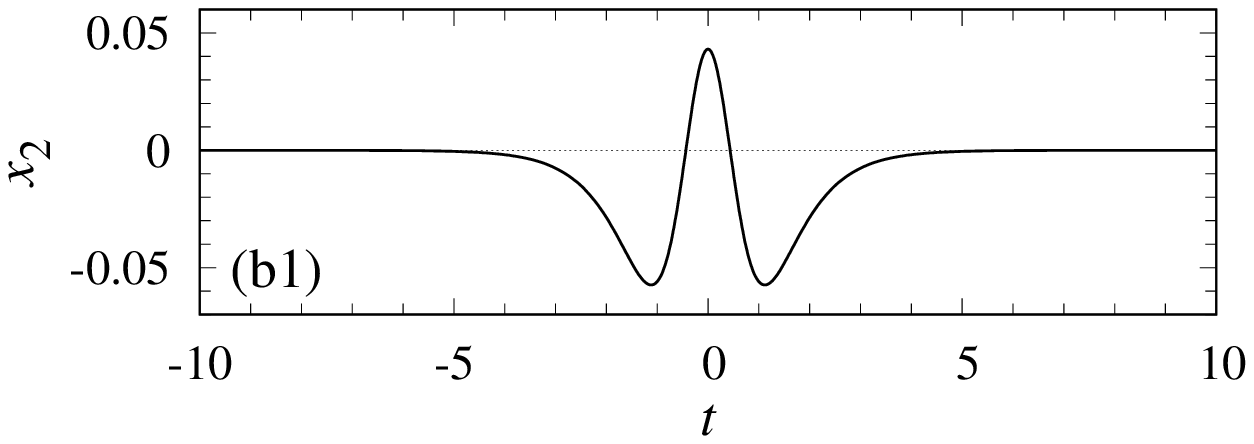}\
\includegraphics[scale=0.48]{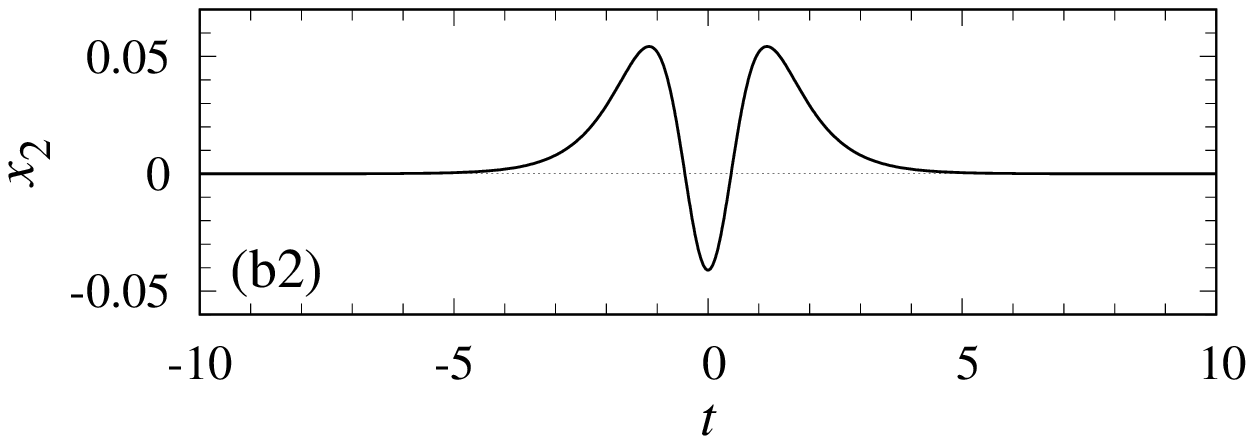}\\[1ex]
\includegraphics[scale=0.48]{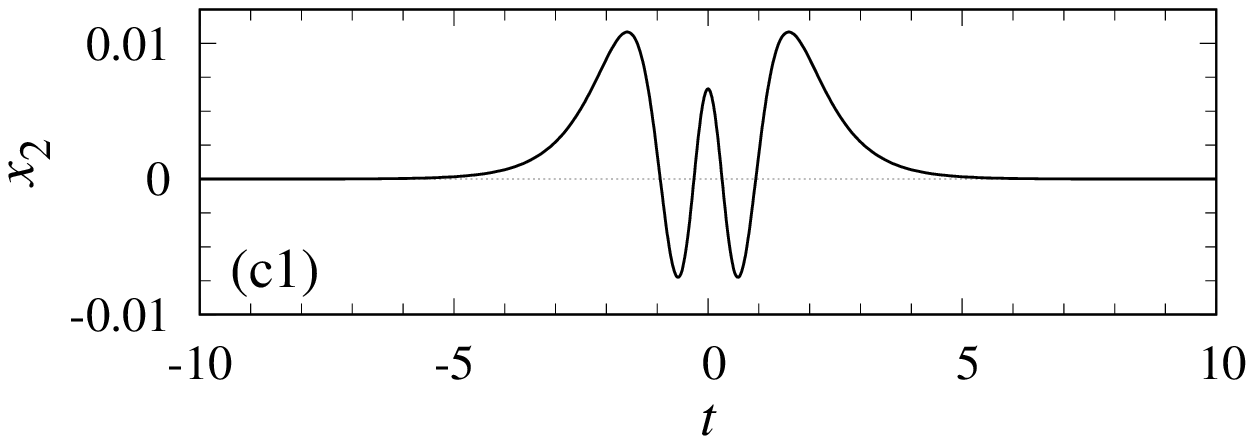}\
\includegraphics[scale=0.48]{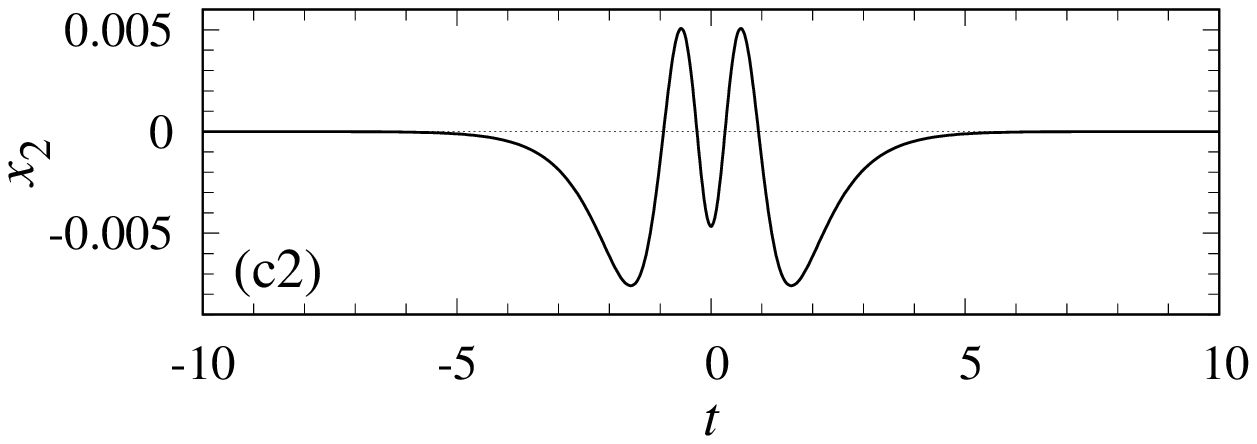}
\caption{Profiles of symmetric homoclinic orbits on the branches
 for $s=2$, $\beta_2=0$ and $\beta_3=4$:
(a1) $\beta_1=1.5$ and $\ell=0$; (a2) $\beta_1=2$ and $\ell=0$; 
(b1) $\beta_1=7.7$ and $\ell=1$; (b2) $\beta_1=7.3$ and $\ell=1$; 
(c1) $\beta_1=17.3$ and $\ell=2$; (c2) $\beta_1=17.4$ and $\ell=2$.
 \label{fig:4d}}
\end{center}
\end{figure}

\begin{figure}[t]
\begin{center}
\includegraphics[scale=0.53]{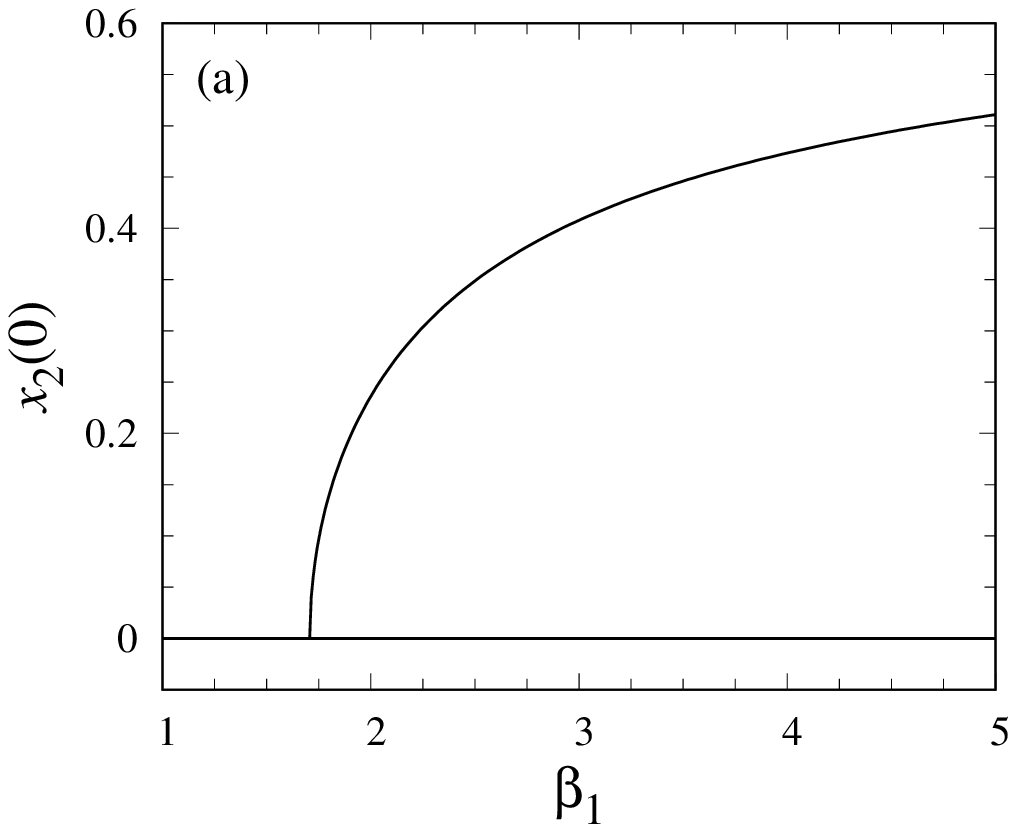}\quad
\includegraphics[scale=0.53]{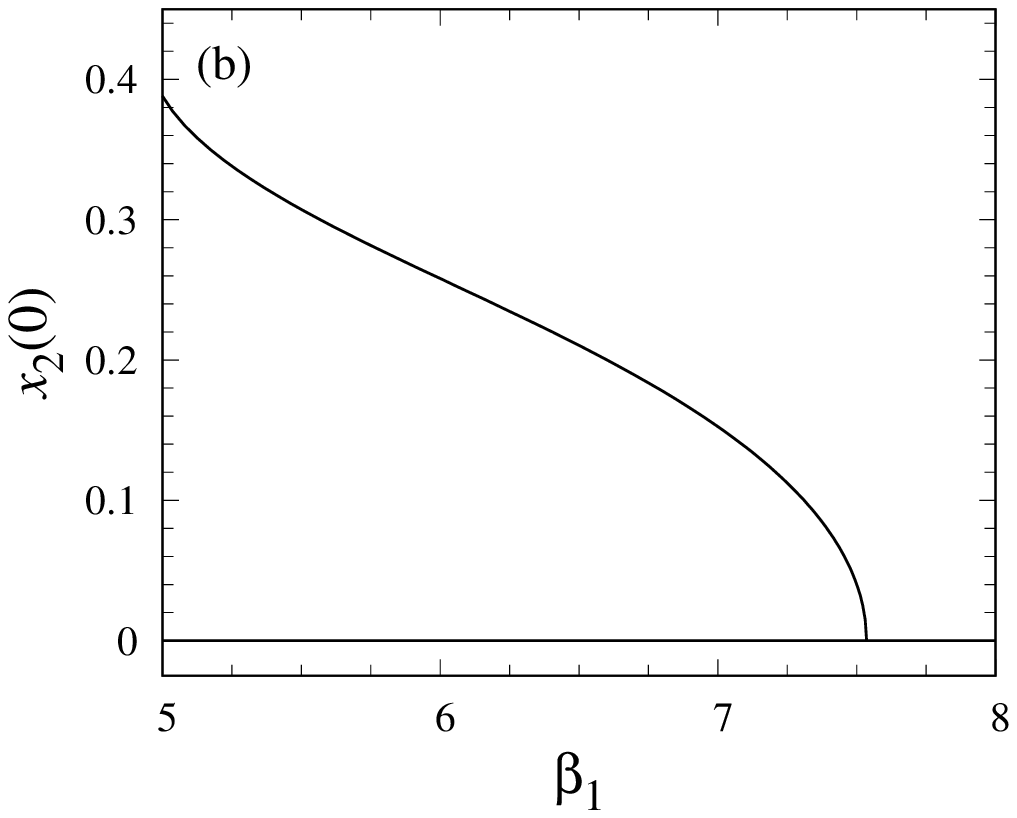}\\
\includegraphics[scale=0.53]{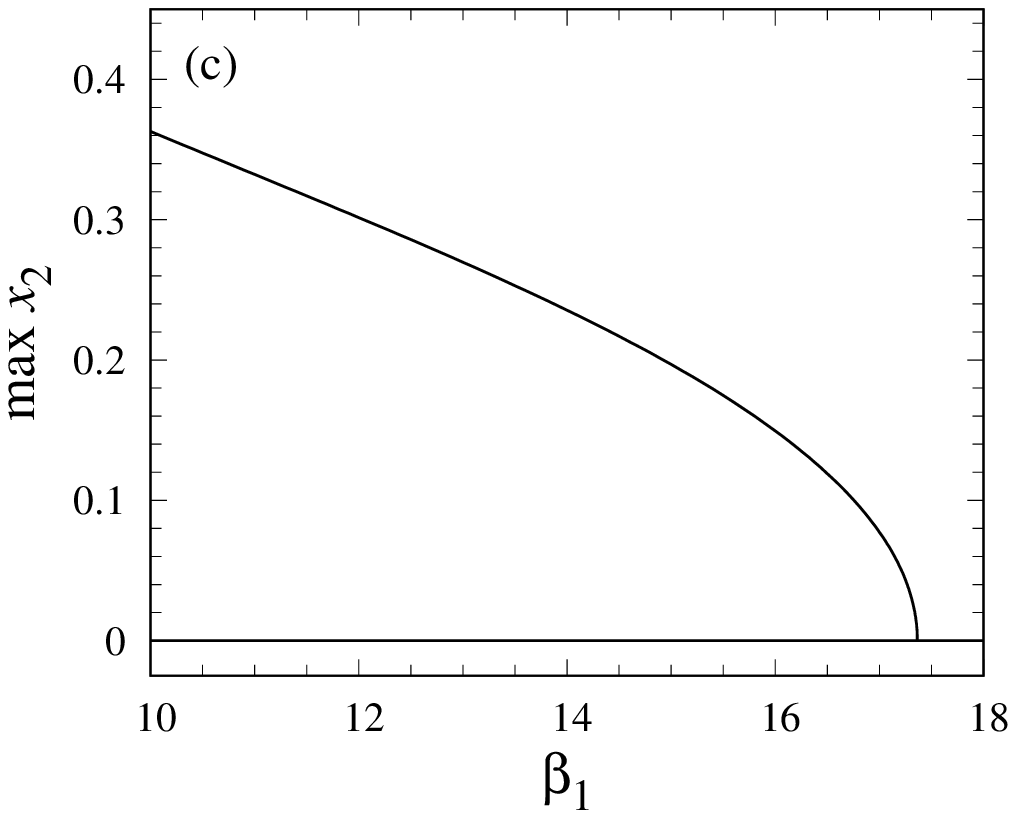}
\caption{Bifurcation diagrams for $s=2$ and $\beta_2,\beta_3=0$:
(a) $\ell=0$; (b) $\ell=1$; (c) $\ell=2$.
Here $\beta_1$ is taken as a control parameter.
\label{fig:4e}}
\end{center}
\end{figure}
\begin{figure}[t]
\begin{center}
\includegraphics[scale=0.48]{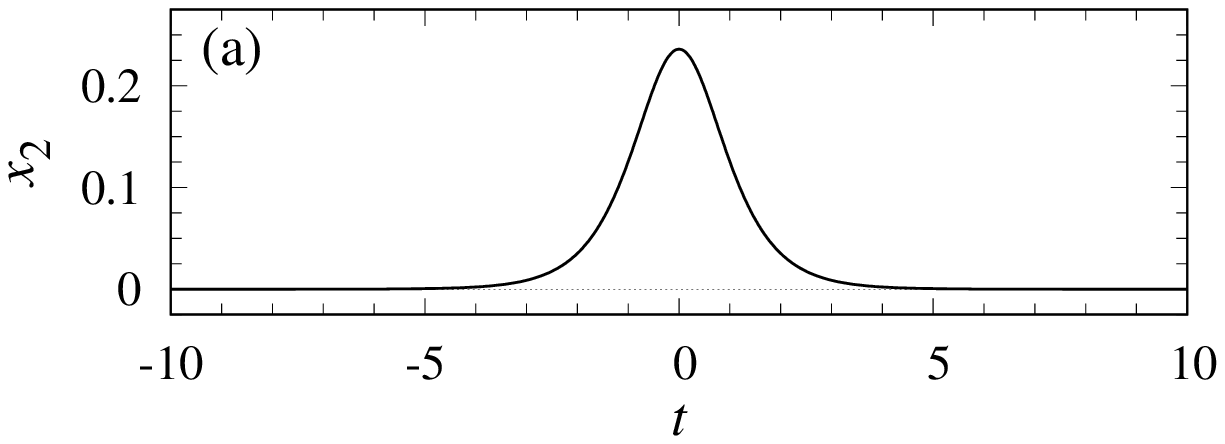}\
\includegraphics[scale=0.48]{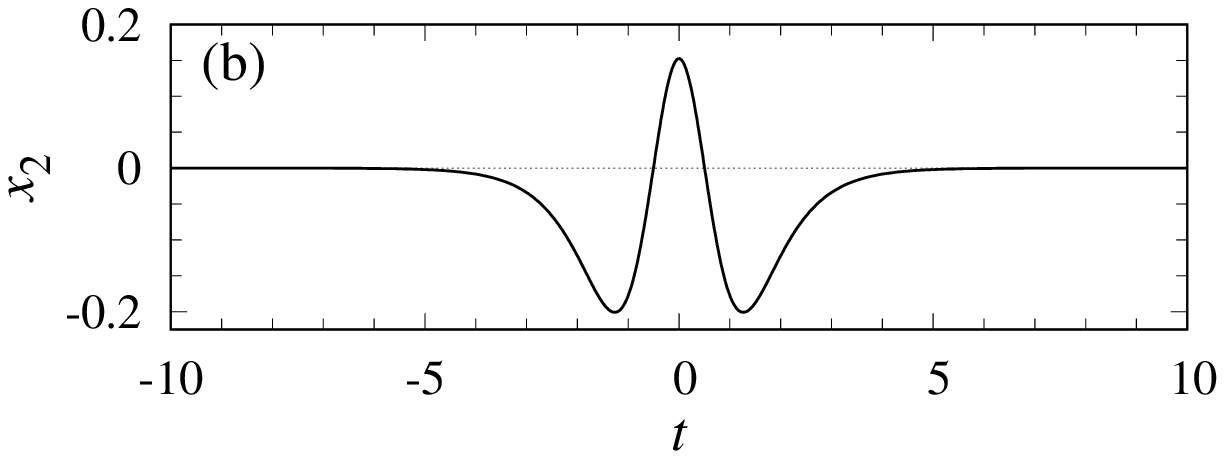}\\[1ex]
\includegraphics[scale=0.48]{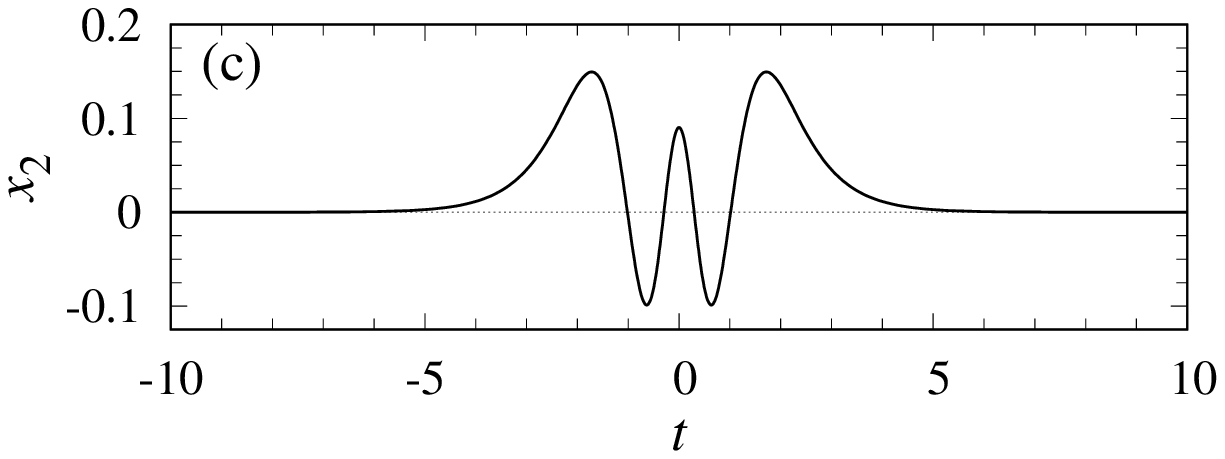}\
\caption{Profiles of symmetric homoclinic orbits on the branches
 for $s=2$ and $\beta_2,\beta_3=0$:
(a) $\beta_1=2$ and $\ell=2$; (b) $\beta_1=7$ and $\ell=1$; (c) $\beta_1=16$ and $\ell=2$.
\label{fig:4f}}
\end{center}
\end{figure}

Finally we give numerical computations for \eqref{eqn:ex}.
We take $s=2$ so that Eq.~\eqref{eqn:excon} gives $\beta_1=1.70710678\ldots$ for $\ell=0$,
 $\beta_1=7.5355339\ldots$ for $\ell=1$ and $\beta_1=17.36396103\ldots$ for $\ell=2$
 as the value of $\beta_1$ for which assumption~(R5) holds with $n_0=2$.
To numerically compute symmetric homoclinic orbits,
 we used the computer tool \texttt{AUTO} \cite{DO07}
 to solve the bondary value problem of \eqref{eqn:ex} with the boundary conditions
 \[
L_\s x(-T)=0,\quad
x(0)\in\fix(R),
 \]
where $T=20$ and $L_\s$ is the $2\times 4$ matrix
 consisting of two row eigenvectors with negative eigenvalues
 for the Jacobian matrix of \eqref{eqn:ex} at the origin,
\[
\begin{pmatrix}
 0 & 0 & 1 & 0\\
 0 & 0 & 0 & 1\\
1 & -\beta_2 & 0 & 0\\
-\beta_2 & s & 0 & 0
\end{pmatrix}.
\]

Figure~\ref{fig:4a} shows bifurcation diagrams for $\beta_3=4$
 when $\beta_1$ is fixed and satisfies \eqref{eqn:excon} for $\ell=0,1,2$
 and $\beta_2$ is taken as a control parameter.
In Fig.~\ref{fig:4a}(c) the maximum and minimum of the $x_2$-component
 are plotted as the ordinate when $x_2(0)$ is positive and negative, respectively.
We observe that a saddle-node bifurcation occurs at $\beta_2=0$
 while another saddle-node bifurcation occurs at a different value of $\beta_2$.
The $x_2$-components of symmetric homoclinic orbits born at the bifurcation point $\beta_2=0$
 in Fig.~\ref{fig:4a} are plotted in Fig.~\ref{fig:4b}.
We also see that they have $\ell+1$ extreme points
 like the corresponding bounded solutions to \eqref{eqn:exve2a}
 when $\beta_1$ satisfies \eqref{eqn:excon} with $\ell=0,1,2$.

Figure~\ref{fig:4c} shows bifurcation diagrams for $\beta_2=0$ and $\beta_3=4$
 when $\beta_1$ is taken as a control parameter.
Note that there exists a branch of $x_2(= x_4)=0$ for all values of $\beta_1$.
We observe that a transcritical bifurcation occurs at $\beta_1=0$ satisfying \eqref{eqn:excon} for $\ell=0,1,2$
 while another bifurcation occurs at a value of $\beta_1$ in Fig.~\ref{fig:4c}(a):
 Eq.~\eqref{eqn:ex} is $\Zset_2$-equivariant with the involution
\[
S': (x_1,x_2,x_3,x_4)\mapsto(-x_1,x_2,-x_3,x_4)
\]
and has a symmetric homoclinic orbit with $(x_1,x_3)=(0,0)$ for $\beta_2=0$,
 and a pitchfork bifurcation at which a pair of symmetric homoclinic orbits with $(x_1,x_3)\neq(0,0)$
 are born occurs there.
The $x_2$-components of symmetric homoclinic orbits born at the bifurcation points in Fig.~\ref{fig:4c}
 are plotted in Fig.~\ref{fig:4d}.

Figure~\ref{fig:4e} shows bifurcation diagrams for $\beta_2,\beta_3=0$
 when $\beta_1$ is taken as a control parameter.
Note that there exist a branch of $x_2(= x_4)=0$ for all values of $\beta_1$,
 and a pair of branches of solutions which are symmetric about $x_2=0$.
We observe that a pitchfork bifurcation occurs
 at values of $\beta_1$ satisfying \eqref{eqn:excon} for $\ell=0,1,2$.
The $x_2$-components of symmetric homoclinic orbits born at the bifurcation in Fig.~\ref{fig:4e}
 are also plotted in Fig.~\ref{fig:4f}.

\section*{Acknowledgements}
This work was partially supported by the JSPS KAKENHI Grant Numbers 25400168, 17H02859.

\end{document}